\newcommand{\addressumushort}{Mathematics and Mathematical Statistics, Ume{\aa}~University, Sweden}
\newcommand{\addressjushort}{Mechanical Engineering, J\"onk\"oping University, Sweden}
\setlist[itemize]{leftmargin=*}
\setlist[itemize,2]{label=$\circ$, leftmargin=*}
\newcommand{\bfI}{\boldsymbol I}
\newcommand{\mcP}{\mathcal{P}}
\newcommand{\mcK}{\mathcal{K}}
\newcommand{\mcE}{\mathcal{E}}
\newcommand{\mcN}{\mathcal{N}}
\newcommand{\mcT}{\mathcal{T}}
\newcommand{\tn}{\vvvert}
\newcommand{\Gammah}{{\Gamma_h}}
\newcommand{\nablas}{\nabla_\Gamma}
\newcommand{\nablash}{\nabla_{\Gamma_h}}
\newcommand{\IR}{\mathbb{R}}
\newcommand{\Ps}{{P}}
\newcommand{\Psh}{{P}_h}
\newcommand{\Qs}{{Q}}
\newcommand{\Qsh}{{Q}_h}
\newcommand{\asym}{a_{\text{sym}}}
\newcommand{\Ds}{D_\Gamma}
\newcommand{\Dsh}{D_\Gammah}
\newcommand{\DPs}{{D_\Gamma}}
\newcommand{\DPsh}{{D_\Gammah}}
\newcommand{\epss}{{\epsilon_\Gamma}}
\newcommand{\epssh}{{\epsilon_{\Gammah}}}
\newcommand{\epsPs}{{\epsilon_\Gamma}}
\newcommand{\epsPsh}{{\epsilon_{\Gammah}}}
\newcommand{\nh}{{n_h}}
\newcommand{\thh}{{t_h}}
\newcommand{\Honet}{H^1_{\mathrm{tan}}(\Gamma)}
\newcommand{\Hst}{H^s_{\mathrm{tan}}}
\newcommand{\projs}{\mathcal{P}}
\newcommand{\muh}{|B|} % use |B|
\numberwithin{equation}{section}
\newtheorem{lem}{Lemma}[section]
\newtheorem{thm}{Theorem}[section]
\newtheorem{rem}{Remark}[section]
\newenvironment{proof}{\noindent \newline {\bf Proof.}}
{\hfill $\blacksquare$ \newline}
\begin{document}

%\global\mdfapptodefinestyle{proofread}{rightline=true,innerleftmargin=10,innerrightmargin=10,frametitlerule=true,frametitlerulecolor=green,frametitlebackgroundcolor=yellow,frametitlerulewidth=2pt}

\title{\bf Analysis of Finite Element Methods for Vector Laplacians on Surfaces}

\author{ 
Peter Hansbo, {Mats~G.~Larson} and
{Karl~Larsson}
}

\date{}

\maketitle
\begin{abstract} 
We develop a finite element method for the vector Laplacian based on the covariant derivative of tangential vector fields on surfaces embedded in $\IR^3$. Closely related operators arise in models of flow on surfaces as well as elastic membranes and shells. The method is based on standard continuous parametric Lagrange elements which describe a $\IR^3$ vector field on the surface and the tangent condition is weakly enforced using a penalization term. We derive error estimates that take the approximation of both the geometry of the surface and the solution to the partial differential equation into account. In particular we note that to achieve optimal order error estimates, in both energy and $L^2$ norms, the normal approximation used in the penalization term must be of the same order as the approximation of the solution. This can be fulfilled either by using an improved normal in the penalization term, or by increasing the order of the geometry approximation. We also present numerical results using higher-order finite elements that verify our theoretical findings.
\end{abstract}

%\paragraph{Subject Classification Codes:} 65M60, 65N15, 65N30.
%\paragraph{Keywords:} surface finite element methods, vector Laplacian, a priori error estimates.

%\pagebreak
%\tableofcontents

%\clearpage

%%%%%%%%%%%%%%%%%%%%%%%%%

\section{Introduction}

In this contribution we develop a finite element method for the 
vector Laplacian on a surface.
While there are several natural Laplacians acting on vector fields on surfaces we in this work consider the \emph{rough Laplacian} which
is a second order elliptic operator based on covariant derivatives. In contrast, another natural Laplacian is the Hodge Laplacian which is based on exterior calculus, see \cite{Holst}, and which differs from the rough Laplacian by a zeroth order term depending only on the curvature of the surface.

The method is based on continuous parametric Lagrange elements with geometry and solution approximations which are piecewise polynomial of orders $k_g$ and $k_u$, respectively. 
Instead of defining an approximation space for tangent vector fields on the surface $\Gamma$ we seek solutions which are full vector fields  $\Gamma\rightarrow\mathbb{R}^3$ and
weakly enforce the tangential condition using a suitable penalty 
term, similar to our work on the Darcy problem, see \cite{Darcy}. 
Note, however, that the Darcy problem does not involve any gradients 
of the velocity vector and is therefore easier to deal with. This 
approach leads to a convenient implementation without the need for 
special finite element spaces.

We prove a priori error estimates in the energy and $L^2$ norm and 
we find that in order to obtain optimal order convergence in both norms it 
is necessary to use a discrete normal in the penalty term of order $k_u+1$. For isoparametric finite elements this translates into a geometry approximation of the normal in the penalty term that is one degree higher 
than of the normal to the discrete surface $\Gamma_h$.
Somewhat curiously, there is no loss of order in $L^2$ due to the fact that the covariant derivative is obtained by projecting the componentwise directional derivative onto the tangent plane and that the approximation order of the projection is only $h^{k_g}$. To prove this, however, requires the use of non-standard techniques which we developed in \cite{LaLa17}.

\paragraph{Related Work.}
Finite elements for partial differential equations on surfaces is now a rapidly developing field that originates from the seminal work of Dziuk  \cite{Dz88} where surface finite elements for the Laplace--Beltrami operator was 
first developed. Most of the research is, however, focused on problems 
with scalar unknowns, see the recent review article \cite{DzEl13} and the references therein, which simplifies the differential calculus since the 
covariant derivative of a vector field, or more generally a tensor field, is 
not needed. Models of flow on surfaces as well as membranes and shells, however, involve vector unknowns, see for instance  \cite{HaLa14} (linear) and \cite{HaLaLa14} (nonlinear), for membrane models formulated using 
the same approach as used in this paper. Furthermore, we employ higher 
order elements similar to the approaches presented in \cite{De09,LaLa17,Darcy,nedelec}.
Concurrent to the present work, similar formulations for vector Laplace operators on surfaces, also using tangential differential calculus, were studied in \cite{JaOlRe17} motivated by their use in methods posed in an embedding space, and later such a method (TraceFEM) for a vector Laplacian problem was presented in \cite{GrJaOlRe17}. As in the present work the formulation in \cite{GrJaOlRe17} assumes a full vector field on the surface but instead of using a penalty term to enforce the field to be tangential a Lagrange multiplier approach is used. In addition our analysis includes the geometry approximation.

\paragraph{Paper Outline.}
The remainder of this paper is organized as follows: In Section~\ref{section:vec-lap-on-surf} we introduce the vector Laplacian and results concerning the continuous problem; in Section~\ref{section:fem} we introduce the finite element method; in Section~\ref{section:prel-results} we recall some basic results regarding lifting and extension of functions between the discrete and continuous surfaces, present a non-standard geometry approximation estimate and introduce the interpolant; in Section~\ref{section:error-estimates} we derive a sequence of necessary lemmas leading up to the a priori error estimate, and finally in Section~\ref{section:numerics} we present numerical examples confirming our theoretical findings.

%%%%%%%%%%%%%%%%%%%%%%%%%

\section{Vector {L}aplacians on a Surface} \label{section:vec-lap-on-surf}
In this section we present the tools we need to work with vector Laplacians on surfaces in the setting of tangential differential calculus, which allows us to employ the Cartesian coordinates of the embedding $\IR^3$ space. We first in Section~\ref{section:the-surface} define the surface and its assumptions; in Sections~\ref{section:tensors}--\ref{sec:tangential-calculus} we introduce the notations needed to describe tensor fields on the surface and derivatives and covariant derivatives of such fields; in Section~\ref{section:function-spaces} we present the suitable Sobolev spaces. As these first five sections involve numerous definitions we for clarity and compactness present them in the form of bullet lists. In Section~\ref{section:basic-lemmas} we establish some lemmas fundamental to the analysis on surfaces, in particular a Poincaré inequality. Finally, in Section~\ref{section:vector-laplacians} we introduce our model variational problem, which involves certain vector Laplacians on a surface.

\subsection{The Surface} \label{section:the-surface}
\begin{itemize}
\item Let $\Gamma$ be a smooth compact surface embedded in $\IR^3$ without boundary 
and let $\rho$ be the signed distance function, negative on the inside and positive 
on the outside. The exterior unit normal to the surface $\Gamma$ is given by $n = \nabla \rho$.

\item Let $p:\IR^3 \rightarrow \Gamma$ be the closest point mapping onto $\Gamma$. Then 
there is a $\delta_0>0$ such that $p$ maps each point in $U_{\delta_0}(\Gamma)$ to 
precisely one point on $\Gamma$, where 
$U_\delta(\Gamma) = \{ x \in \IR^3 : |\rho(x)| < \delta\}$ is the open tubular neighborhood 
of $\Gamma$ of thickness $\delta>0$. 

\item
As $\rho$ is a signed distance function within $U_{\delta_0}(\Gamma)$ the unit normal to $\Gamma$ naturally extend to $U_{\delta_0}(\Gamma)$ through its original definition $n(x) = \nabla\rho$.

\item
For each function $u:\Gamma \rightarrow \IR^m$, 
$m=1,2,\dots,$ we define the componentwise extension $u^e$ to the neighborhood 
$U_{\delta_0}(\Gamma)$ by the pull back $u^e = u \circ p$.

\item The curvature tensor (or second fundamental form) is defined on $U_{\delta_0}(\Gamma)$ by 
\begin{equation}\label{eq:curvature-tensor}
\kappa = \nabla \otimes \nabla \rho
\end{equation}
and may be expressed in the form
\begin{equation}\label{Hform}
\kappa(x) = \sum_{i=1}^2 \frac{\kappa_i^e}{1 + \rho(x)\kappa_i^e} a_i^e \otimes a_i^e
\end{equation}
where $\kappa_i$ are the principal curvatures with corresponding orthonormal 
principal curvature vectors $a_i$, see \cite[Lemma 14.7]{GiTr01}. 
\end{itemize}
%Note that the above definitions of the normal $n$ and curvature $\kappa$ are valid within the tubular neighborhood $U_{\delta_0}(\Gamma)$.

\subsection{Tensors} \label{section:tensors}
\begin{itemize}
\item Let $V,W$ be finite dimensional vector spaces with bases 
$\{e_i\}_{i=1}^m$ respectively $\{f_i\}_{i=1}^n$. The tensor product $V\otimes W$ is the vector 
space spanned by all pairs $(e_i,f_j)$ of basis vectors, denoted by $e_i \otimes f_j$, 
and there is a bilinear product $\otimes: V \times W \rightarrow V \otimes W$ defined by 
\begin{equation}
v \otimes w = \left(\sum_{i=1}^m v_i e_i\right) \otimes \left( \sum_{j=1}^n w_j f_j \right) 
=   \sum_{i=1}^m  \sum_{j=1}^n v_i w_j (e_i \otimes f_j )
\end{equation}
The dimension of $V\otimes W$ is $\text{dim}(V\otimes W)=\text{dim}(V)\text{dim}(W)$.
\item If $V$ and $W$ are inner product spaces, $V\otimes W$ is an inner product space 
with product 
\begin{equation}
(a\otimes b, v\otimes w)_{V\otimes W} = (a,v)_V (b,w)_W
\end{equation} 
and the inner product norm is given by
\begin{equation}
\| v \otimes w \|_{V\otimes W} = \| v \|_V \| w \|_W
\end{equation}

\item The dual space of $V$ denoted by $V^*$ is the space of all linear functionals 
$\lambda: V \rightarrow \IR$. The dual basis $\{\lambda_j\}_{j=1}^n$ is defined by the identity 
$\lambda_j(e_i) = \delta_{ij}$. When $V$ is an inner product space, there is for each $\lambda \in V^*$ 
a unique vector $\xi_\lambda$ such that $\xi_\lambda(v) = (v, \xi_\lambda)_V$, $\forall v \in V$.

\item Tensors of type $(k,l)$ are elements in the tensor product space
\begin{equation}
W^{k,l} = \underbrace{V\otimes\cdots \otimes V}_{\text{$k$ copies}} 
\otimes 
\underbrace{V^* \otimes \cdots \otimes V^*}_{\text{$l$ copies}}
\end{equation} 

\item If $\{e_i\}_{i=1}^m$ is an orthonormal basis in $V$, then $\{e_i\}_{i=1}^m$ is also the 
corresponding dual basis in $V^*$. If $Q:V\rightarrow V$ is an orthogonal mapping 
$\widetilde{e}_i= Qe_i$ is also an orthonormal basis in $V$ and hence also the corresponding 
dual basis in $V^*$. 
\item For $v$ in $V$ let $[v]$ denote the array of coefficients in the expansion $v = \sum_i v_i e_i$. 
If $v = \sum_i v_i e_i = \sum_i \widetilde{v}_i \widetilde{e}_i =  \sum_i \widetilde{v}_i Q e_i$ 
we find that $v_j =  \sum_i \widetilde{v}_i (Q e_i,e_j)_V = \sum_i \widetilde{v}_i Q_{ji}$, $j=1,\dots, m$, 
and thus in matrix form $[v] = Q [\widetilde{v}]$ or $[\widetilde{v}] = Q^{-1} [v] = Q^T[v]$. The 
same transformation rules hold for the dual space $V^*$ and thus we do not have to distinguish 
between $V$ and $V^*$ and we can restrict our attention to tensors of type $k+l$ of the form 
\begin{equation}
W^{k+l} = \underbrace{V\otimes\cdots \otimes V}_{\text{$k+l$ copies}} 
\end{equation}
\item Let $v\in V^k$ and $w\in V^l$, for $n=1,\dots,\text{min(k,l)}$ we define the 
$n$-contraction $v\cdot_n w \in V^{k+l-2n}$ by 
\begin{equation}
(\otimes_{i=1}^k v_i ) \cdot_n (\otimes_{j=1}^l w_j )
= 
\Pi_{i=1}^n (v_{k-n+i}, w_i)_V  (\otimes_{i=1}^{k-n} v_i ) \otimes (\otimes_{j=1}^{l-n} w_j )
\end{equation}
Special cases include $n=l=1$ or $n=k=1$ and $k=l=2$ where we use the simplified notation  
\begin{equation}
v\cdot w \in V^{k-1}, \qquad v:w \in \IR 
\end{equation}

\end{itemize}

\subsection{Tensor Fields}

\paragraph{Vector Fields.} 

\begin{itemize}
\item Let $\{e_i \in \IR^3\}_{i=1}^3$ be a Cartesian basis, i.e., a fixed orthonormal basis, 
in the embedding space $\IR^3$.
\item  For $x\in U_{\delta_0}(\Gamma)$ let $P(x)=I - n(x) \otimes n(x)$ be the projection onto the tangential plane $T_x(\Gamma)$ and $Q = I - P$ the projection onto the normal line.
\item The projected Cartesian basis $\{ p_i = \Ps e_i :\Gamma \rightarrow T_x(\Gamma) \}_{i=1}^3$ 
spans the tangential plane $T_x(\Gamma)$ but is not a basis for $T_x(\Gamma)$ since the 
vectors in the set are linearly dependent. Note however that for $b \in T_x(\Gamma)$ we have the 
unique expansion $b = \sum_{i=1}^3 b_i e_i$, which induces the canonical expansion  
$b = \sum_{i=1}^3 b_i P e_i = \sum_{i=1}^3 b_i p_i$. Furthermore, inner products and norms are 
clearly independent of the choice of expansion in the projected basis.

\item Define: (a) The space of general smooth vector fields  
\begin{equation}
\mcT^1 = \{ a = \sum_{i=1}^3 a_i e_i : a_i \in C^\infty(\Gamma) \}
\end{equation}
(b) The space of tangential smooth vector fields
\begin{equation}
\mcT^1_{\mathrm{tan}} = \{ a = \sum_{i=1}^3 a_i p_i : a_i \in C^\infty(\Gamma) \}
\end{equation}
\end{itemize}

\paragraph{Tensor Fields.} 
\begin{itemize}
\item Define: (a) The vector space of smooth $m$ tensor fields on $\Gamma$,
\begin{equation}
\mcT^m = \{X = \sum_{i_1,\dots ,i_m =1}^3 X_{i_1,\dots, i_m} e_{i_1} \otimes\dots \otimes e_{i_m}, 
X_{i_1,\dots, i_m} \in C^\infty(\Gamma, \IR)
 \}
\end{equation} 
(b) The vector space of smooth tangential $m$ tensor fields on $\Gamma$,
\begin{equation}\label{eq:tan-tensor-field}
\mcT^m_{\mathrm{tan}} = \{X = \sum_{i_1,\dots, i_m =1}^3 X_{i_1,\dots, i_m} p_{i_1} \otimes\dots \otimes p_{i_m}, 
X_{i_1,\dots, i_m} \in C^\infty(\Gamma, \IR)
 \}
\end{equation} 

\item The projection $\projs:\mcT^m \rightarrow \mcT^m_{\mathrm{tan}}$, is defined by
\begin{equation} \label{eq:def-proj}
\projs\left( \sum_{i_1,\dots, i_m =1}^3 X_{i_1,\dots, i_m} e_{i_1} \otimes\dots \otimes e_{i_m}  \right)
= 
\sum_{i_1,\dots, i_m =1}^3 X_{i_1,\dots, i_m} p_{i_1} \otimes\dots \otimes p_{i_m}
\end{equation}

\end{itemize}

\subsection{Tangential Calculus}
\label{sec:tangential-calculus}

\paragraph{Tangential Derivatives.}
\begin{itemize}
\item
The directional derivative of $u \in \mcT^1$, in the direction of $a \in \mcT^1$, 
is defined by
\begin{equation}\label{eq:partial-a}
\partial_a u
=
(a \cdot \nabla) u^e = (u^e \otimes \nabla)\cdot a
\end{equation}
where $a \cdot\nabla = \sum_{i=1}^3 a_i \partial_i$
and $u^e\otimes\nabla$ is the Jacobian of $u^e$.

\item Define the tangential gradient operator 
$\nablas = \sum_{j=1}^n e_j  \partial_{p_j}$  
and the total derivative of a vector field
\begin{equation}
u \otimes \nablas
=
\sum_{j=1}^3 (\partial_{p_j} u) \otimes e_j
=
\sum_{i,j=1}^3 (\partial_{p_j} u_i) e_i \otimes e_j
=
(u^e \otimes \nabla)P
\end{equation}
We note that 
\begin{equation} \label{eq:partial-a-tan}
\partial_a u = (u \otimes \nablas)\cdot a\qquad \forall a \in \mcT^1_{\mathrm{tan}}
\end{equation}

\item More generally,  for $X \in \mcT^m$ we define in the same way the directional derivative
\begin{equation}\label{eq:tensor-gen-directional-der}
\partial_a X = \sum_{i_1,\dots, i_m =1}^3 (\partial_a X_{i_1,\dots, i_m}) e_{i_1} \otimes\dots \otimes e_{i_m}
\end{equation}
and the total derivative $X\otimes \nablas \in \mcT^{m+1}$,
\begin{equation}\label{eq:tensor-gen-tot-der}
X \otimes \nablas = 
\sum_{j=1}^3 
\sum_{i_1,\dots, i_m =1}^3 (\partial_{p_j} X_{i_1,\dots, i_m}) e_{i_1} \otimes\dots \otimes e_{i_m} \otimes e_j
\end{equation}
and we note that 
\begin{equation}
\partial_a X = (X \otimes \nablas )\cdot a\qquad \forall a \in \mcT^m_{\mathrm{tan}}
\end{equation}
since $a = a_i e_i = a_i p_i$ and $a_i = e_i \cdot a$.

\item Higher order derivatives of $X \in \mcT^m$ are obtained by repeated 
application of (\ref{eq:tensor-gen-tot-der}), 
\begin{equation}
(\nablas)^k X
=
X \otimes \nablas^k
= 
X \,\underbrace{\otimes\,\nablas\otimes\cdots\otimes\nablas}_{\text{$k$ gradients}}
\end{equation}
which gives $(\nablas)^k X \in \mcT^{m+k}$ of the form 
\begin{equation}
(\nablas)^k X
= 
\sum_{j_1,\dots, j_k=1}^3 
\sum_{i_1,\dots, i_m =1}^3 
(\partial_{p_{j_k}} \dots \partial_{p_{j_1}} X_{i_1,\dots, i_m}) (e_{i_1} \otimes\dots \otimes e_{i_m}) 
\otimes 
(e_{j_1} \otimes \dots \otimes e_{j_k})
\end{equation}

\end{itemize}

%We can thus retrieve the coefficients from the expanded tensor \eqref{eq:higherorderder} 
%by the contraction
%\begin{align}
%u_{i_0 \cdots i_m} = (u\otimes\nablas^m):(e^{i_0} \otimes e_{i_1} \otimes \cdots \otimes e_{i_m})
%\end{align}
%where the contraction operator ":" is defined by
%\begin{align}
%&(a_1 \otimes \cdots \otimes a_m \otimes a^{m+1} \otimes \dots \otimes a^{m+n}
%:(b^1 \otimes \cdots \otimes b^m \otimes b^{m+1} \otimes \dots \otimes b^{m+n})
%\\
%&\qquad =
%(a_{1}\cdot b^1)\cdots(a_{m}\cdot b^m) (a^{m+1}\cdot b_{m+1})  \cdots a^{m+n}\cdot b_{m+n}
%\end{align}
%where $a$ and $b$ are tensors of type $(m,n)$.

\paragraph{Covariant Derivatives.}
\begin{itemize}
\item 
For $u \in \mcT^1_{\mathrm{tan}}$ 
%be a tangential vector field on $\Gamma$,
%\begin{equation}
%u:\Gamma\ni x \mapsto \sum_{i=1}^3 (u_i p_i)(x) \in T_x(\Gamma)
%\end{equation}
we define the covariant derivative of $u$ in the direction $a$ by
\begin{equation}\label{eq:cov-der-def}
D_a u = \Ps \partial_a u
\end{equation}
\item 
Writing $u=\sum_{i=1}^3 u_i p_i$ we have using the product rule
\begin{align}\label{eq:cov-der-a}
D_a u
&=
\Ps \partial_a u
= 
\Ps \partial_a \sum_{i=1}^3 u_i p_i
=
\sum_{i=1}^3 (\partial_a u_i) p_i + u_i \Ps(\partial_a p_i)
\end{align}
We note that the covariant derivative includes a lower order term multiplied by 
a projected directional derivative of a tangent basis vector $p_i$. Writing 
$a= \sum_{j=1}^3 a_j p_j$ we have  $\partial_a p_i 
= \sum_{j=1}^3 a_j \partial_{p_j} p_i $ and using the identity $p_i = e_i - n_i n$ 
we find that
\begin{equation}
\partial_{p_j} p_i  
= \partial_{p_j} ( e_i - n_i n ) 
= -p_j \cdot \kappa_i n + n_i \kappa\cdot p_j  
= -\kappa_{ij} n   - n_i \kappa_j    
\end{equation}
where $\kappa = n \otimes \nabla = \nabla^2 \rho$ is the tangential curvature tensor, 
see (\ref{eq:curvature-tensor}), with elements $\kappa_{ij}$ and columns (and rows) 
$\kappa_j$. Thus $\Ps (\partial_{p_j} p_i) = -  n_i \kappa_j$ and expanding the right 
hand side in the Cartesian basis we obtain
\begin{equation}
\Ps( \partial_{p_j} p_i ) = \sum_{k=1}^3 \gamma_{ij,k} p_k 
\end{equation} 
where the coefficients $\gamma_{ij,k} = - n_i \kappa_{jk}$ correspond to the Christoffel 
symbols of the Levi--Civita connection. 

Furthermore, note that in the case of the canonical expansion $u =  \sum_{i=1}^3 u_i p_i 
= \sum_{i=1}^3 u_i e_i$ we have the simplified identity  
\begin{equation}
D_a u = P \partial_a u 
= P \left( \sum_{i=1}^3 (\partial_a u_i) e_i \right) 
= \sum_{i=1}^3 (\partial_a u_i) p_i
\end{equation}
Using the fact $\sum_{i=1}^3 u_i n_i = 0$, we also note that the second term in the right 
hand side of (\ref{eq:cov-der-a}) is indeed zero since
\begin{equation}
\sum_{i=1}^3 u_i \sum_{j=1}^3 a_j P (\partial_{p_j} p_i) 
= 
-\sum_{i=1}^3 u_i \sum_{j=1}^3 a_j n_i \kappa_j 
=
-\left(\sum_{i=1}^3 u_i n_i \right) \left( \sum_{j=1}^3 a_j \kappa_j\right) = 0 
\end{equation}

\item Define the total covariant derivative $\DPs u \in \mcT^2_{\mathrm{tan}}$,
\begin{align}
\DPs u
&=
\sum_{i=1}^3 (D_{p_i} u)\otimes p_i 
=
\Ps (u \otimes \nablas)
=
P (u^e \otimes \nabla) P
\end{align}
and we note that 
\begin{equation}
D_a u = (D_\Gamma u) \cdot a  \qquad \forall a \in \mcT^1_\mathrm{tan}
\end{equation}
In contrast to $u \otimes \nablas$, $\DPs u $ is a tangential tensor. 

\item The symmetric part of $\DPs u $ is defined by
\begin{equation}
\epsPs(u) = \frac{1}{2}\left( \DPs u + (\DPs u)^T \right)
\end{equation} 
which is the tangential strain tensor used in modeling of solids and fluids, see \cite{HaLa14}.

%
%We construct the projection of a $(m,0)$ tensor as
%\begin{align}
%\projs(X) = \
%\sum_{\mathclap{i_0,\dots,i_m=1}}^3
%\
%\left( X:(e^{i_0} \otimes \cdots \otimes e^{i_m}) \right)
%(p_{i_0} \otimes \cdots \otimes p_{i_m})
%\end{align}
%where we first retrieve the coefficients of the expansion in the Euclidean basis and
%then expand the tensor in the projected Euclidean basis.

\item The covariant derivative $D_a X \in  \mcT^m_{\mathrm{tan}}$ of a tangential tensor 
$X \in  \mcT^m_{\mathrm{tan}}$ in the direction $a\in \mcT^1_{\mathrm{tan}}$ is defined by 
\begin{align}
D_a X &=
\mcP(\partial_a X)
\\&=
\ \sum_{\mathclap{i_1,\dots,i_m=1}}^3 \
(\partial_a X_{i_1 \cdots i_m}) (p_{i_1} \otimes \cdots \otimes p_{i_m})
+
X_{i_1 \cdots i_m} \projs\left( \partial_a(p_{i_1} \otimes \cdots \otimes p_{i_m}) \right)
\end{align}
where the projection $\mcP$ of a tensor field is defined in \eqref{eq:def-proj}. We use the product rule $\partial_a ( Y \otimes Z ) = (\partial_a Y ) \otimes Z 
+ Y \otimes (\partial_a Z)$, $X \in \mcT^m$, $Y\in \mcT^n$, to compute the second term. 
The total covariant derivative 
$\Ds X \in \mcT^{m+1}_{\mathrm{tan}}$, is defined by
\begin{align}
\Ds X = \sum_{j=1}^3 (D_{p_j} X) \otimes p_j
\end{align}
and note that since $p_j \cdot a = P e_j \cdot a = e_j \cdot P a = e_j \cdot a = a_j$ we have 
\begin{equation}
D_a X = (\Ds X) \cdot a
\end{equation}
for all tangential vector fields $a$. 

\item Iterating this definition we can represent covariant derivatives 
of order $m$ as
\begin{align}
(\Ds)^m u = \underbrace{\Ds \cdots \Ds}_{\mathclap{\text{$m$ covariant derivatives}}} u
\end{align}

\end{itemize}

\subsection{Function Spaces} \label{section:function-spaces}

For $\omega\subset\Gamma$ let $(\cdot,\cdot)_\omega$ and $\| \cdot \|_{L^2(\omega)}$ denote the usual $L^2$ inner product and norm on $\omega$ and let $\| \cdot \|_{L^\infty(\omega)}$ denote the usual $L^\infty$ norm on $\omega$.
We define the following Sobolev spaces:
\begin{itemize}
\item $H^s(\omega)$, with $\omega \subset \Gamma$, denotes the standard 
Sobolev spaces of scalar or vector valued functions with componentwise 
derivatives and norm
\begin{equation} \label{eq:H1norm}
\| v \|^2_{H^s(\omega)} 
=  
\sum_{j=0}^s \| (\nablas)^j v  \|^2_{L^2(\omega)} 
\end{equation}
\item  $\Hst(\omega)$, with $\omega \subset \Gamma$, denotes the Sobolev space of tangential vector 
fields with covariant derivatives and norm
\begin{equation} 
\| v \|^2_{\Hst(\omega)} 
=  
\sum_{j=0}^s \| (\DPs)^j v \|^2_{L^2(\omega)} 
\end{equation}
\end{itemize}
We employ the standard notation $L^2(\omega) = H^0(\omega)$ and 
$\| v \|_{L^2(\omega)} = \| v \|_\omega$.

\subsection{Basic Lemmas} \label{section:basic-lemmas}
We here prove three fundamental lemmas. In Lemma~\ref{lem:constant-fields} we show that
the kernel of the covariant derivative of a tangential vector field is empty, a fact then used in the proof of Lemma~\ref{lem:poincare-cont}, which is a Poincaré inequality. Finally, in Lemma~\ref{lem:H-Ht-equivalence} we show that Sobolev norms based on tangential respectively covariant derivatives are equivalent.

\begin{lem}\label{lem:constant-fields} If $v \in \Honet$ satisfies 
$\DPs v  = 0$ then $v = 0$.
\end{lem}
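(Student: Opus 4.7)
The plan is to show that the hypothesis forces $v$ to be a parallel tangential vector field, and then to invoke the Gaussian curvature of $\Gamma$ to conclude $v = 0$. My first step is a regularity boost: the pointwise equation $D_a v = 0$ for every tangential $a$ is essentially the parallel-transport ODE along curves in $\Gamma$, so by standard elliptic/ODE regularity an $\Honet$ solution is in fact smooth, and I may argue classically from here on.

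Next I would establish that $|v|^2$ is constant on each connected component of $\Gamma$. For any tangential vector $a$, using the componentwise extension of $v$ together with the fact that the normal part of $\partial_a v$ is orthogonal to the tangential $v$,
\begin{equation*}
\partial_a |v|^2 = 2\langle \partial_a v, v\rangle_{\IR^3} = 2\langle \Ps \partial_a v, v\rangle_{\IR^3} = 2\langle D_a v, v\rangle = 0.
\end{equation*}
Hence $|v|^2$ has vanishing tangential gradient, and is locally (hence componentwise) constant.

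Then I would invoke the Ricci identity on the $2$-surface $\Gamma$: for smooth tangential $v$ and tangential $a,b$,
\begin{equation*}
[D_a, D_b] v - D_{[a,b]} v = R(a,b) v = K\bigl(\langle b, v\rangle a - \langle a, v\rangle b\bigr),
\end{equation*}
where $K$ denotes the Gaussian curvature and we have used that on a $2$-manifold the Riemann tensor is completely determined by $K$. Since $\DPs v \equiv 0$ the left-hand side vanishes identically, so taking $a,b$ to be an orthonormal tangent basis gives $Kv = 0$ pointwise, and hence $v(x) = 0$ at every $x$ with $K(x) \neq 0$. Any connected component of $\Gamma \subset \IR^3$ carries such a point: at a point $p_0$ maximising $|x|$ on the component, $\Gamma$ lies locally inside the ball of radius $|p_0|$, so both principal curvatures at $p_0$ are at least $1/|p_0|$ and $K(p_0) \geq |p_0|^{-2} > 0$. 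Combining with the constancy of $|v|^2$ on the component yields $v \equiv 0$ there, and therefore on all of $\Gamma$.

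The main obstacle I expect is translating the abstract Ricci identity into the extrinsic formulation used throughout the paper, where $D_a v = \Ps \partial_a v$. Unwinding $[D_a, D_b] v$ in these terms produces a contribution of the form $\Ps\bigl((\partial_a \Ps)\partial_b v - (\partial_b \Ps)\partial_a v\bigr)$, and one must recognise the bilinear expression in $\nabla n = -\kappa$ that emerges as precisely $K$ times the antisymmetric combination above; this is the content of the Gauss equation for surfaces in $\IR^3$. Once that identification is made, everything else in the plan is a short, essentially algebraic, calculation.
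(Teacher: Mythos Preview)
Your proposal is correct and follows the same core route as the paper: use the curvature identity (the Gauss equation in extrinsic form) to conclude $v=0$ at every point of nonzero Gaussian curvature, then propagate this to the whole surface. The paper carries out exactly the computation you flag as the main obstacle, deriving $R(a,b,v) = (b\cdot\kappa\cdot v)\,\kappa\cdot a - (a\cdot\kappa\cdot v)\,\kappa\cdot b$ and checking $R(t_2,t_1,t_1)\cdot t_2 = K$.

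Two steps are handled a bit differently. For the propagation, the paper argues that parallel transport preserves $|v|$ along geodesics and invokes geodesic completeness of the compact surface; your direct computation $\partial_a|v|^2 = 2\langle D_a v, v\rangle = 0$ is shorter and avoids geodesics entirely. For the existence of a point with $K\neq 0$, the paper cites a theorem, whereas your maximum-of-$|x|$ argument is self-contained. On regularity, the paper works with smooth fields and closes with a one-line appeal to density of smooth fields in $\Honet$, while you bootstrap smoothness from the parallel-transport ODE; your route is the cleaner of the two, since density alone does not obviously transfer the conclusion to a general $\Honet$ field with $\DPs v=0$.
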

\begin{proof} {\bf Step 1.} \emph{Claim: if $v\in \mcT^1_{\mathrm{tan}}$ is a smooth tangential vector 
field which is covariantly constant, $\DPs v =0$, there is a point $x \in \Gamma$ 
such that $v(x)=0$.}  

To verify this claim we introduce the Riemannian curvature tensor, see \cite{doCarmo}, which 
is the mapping 
$R:  \mcT^1_{\mathrm{tan}}\times \mcT^1_{\mathrm{tan}} \times \mcT^1_{\mathrm{tan}} \rightarrow \mcT^1_{\mathrm{tan}}$ defined by
\begin{equation}\label{eq:riemannian-curv}
R(a,b,v) 
= D_a D_b v - D_a D_b v - D_{[a,b]} v 
\end{equation}
where $D_a v = \Ps \partial_a v$ is the covariant derivative in the direction of the tangential 
vector field $a$ and $[a,b]$ is the tangent vector field given by the Lie bracket
\begin{equation}
[a,b] = \partial_b a - \partial_a b
\end{equation}
where we recall that $\partial_b a = (a \otimes \nablas)\cdot b$, see \eqref{eq:partial-a-tan}.
To see that the Lie bracket is indeed a tangential vector field we note that since $n\cdot a=0$ 
we have $0= \partial_b (n\cdot a) = (\partial_b n ) \cdot a + n \cdot (\partial_b a) 
= b \cdot \kappa \cdot a +  n \cdot (\partial_b a)$ and thus 
$n \cdot (\partial_b a) = -b \cdot \kappa \cdot a $, from which it follows that $n\cdot [a,b] = 0$.

All derivatives in (\ref{eq:riemannian-curv}) cancel so that $R(a,b,v)$ is a 
tangential vector field which does not depend on any derivatives of $v$.  
In the case of an embedded codimension one surface in $\IR^3$ we have 
the identity
\begin{equation}\label{eq:riemannian-curvature-surface}
R(a,b,v) = (b\cdot \kappa \cdot v) \kappa \cdot a 
-  (a \cdot \kappa \cdot v) \kappa \cdot b
\end{equation}
where $\kappa$ is the curvature tensor of $\Gamma$, and we note in particular that 
there are no derivatives of $v$. 
To verify \eqref{eq:riemannian-curvature-surface} we first
recall the directional and covariant derivatives introduced in Section~\ref{sec:tangential-calculus}, i.e., 
\begin{equation}
\partial_a v = (v \otimes \nablas) \cdot a, \qquad D_a v  = P \partial_a v
\label{eq:directional-covariant-notation}
\end{equation}
for a tangential vector field $a$. We then have
\begin{align}
D_a D_b v &= \Ps \partial_a \Ps \partial_b v
\\
 &= \Ps \partial_a (\partial_b v - (n \cdot \partial_b v ) n )
 \\
  &= \Ps \partial_a \partial_b v 
-  
   \Ps ( \partial_a (n \cdot \partial_b v ) n + (n \cdot \partial_b v ) \partial_a n  )
   \\
   &=  \Ps \partial_a \partial_b v 
-  
(n \cdot \partial_b v ) \kappa\cdot a  
   \\
   &=  \Ps \partial_a \partial_b v 
+ 
(b \cdot \kappa \cdot v ) \kappa\cdot a  
\end{align}
Here we used that identities 
\begin{equation}
\Ps n = 0, 
\qquad 
\Ps\kappa = \kappa, 
\qquad 
\partial_a n = \kappa \cdot a,
\qquad
n \cdot \partial_b v = - b \cdot \kappa \cdot v 
\end{equation}
where the last formula follows from the fact that $v \cdot n =0$, 
which leads to
\begin{equation}
0 = \partial_b (n \cdot  v  ) =  n \cdot (\partial_b v)    
+ \partial_b n \cdot v 
= n \cdot (\partial_b v)    
+ b \cdot \kappa \cdot v 
\end{equation}
We thus obtain
\begin{align}
R(a,b,v) &= D_a D_b v - D_b D_a v  - D_{[a,b]} v
\\
 &=  \underbrace{\Ps (\partial_a \partial_b v - \partial_b \partial_a v - \partial_{[a,b]} v )}_{\bigstar}
 \\ \nonumber &\qquad
 +  
 (a\cdot \kappa \cdot v ) \kappa\cdot b  - (b \cdot \kappa \cdot v ) \kappa\cdot a
 \\
 &=  (a\cdot \kappa \cdot v ) \kappa\cdot b  - (b \cdot \kappa \cdot v ) \kappa\cdot a
\end{align}
Here we used the identity 
\begin{equation}
\partial_a \partial_b v_i = \partial_b \partial_a v_i + \partial_{\partial_a b} v_i 
\end{equation}
for each component $v_i$ in $v$, to conclude that 
\begin{align}
\partial_a \partial_b v - \partial_b \partial_a v 
&= \partial_{\partial_a b}  v -  \partial_{\partial_b a}  v
=\partial_{[a,b]} v
\end{align}
and thus $\bigstar = 0$.
This concludes the verification of (\ref{eq:riemannian-curvature-surface}).

Next let $\{t_i\}_{i=1}^2$ be a smooth orthonormal basis to $T_x(\Gamma)$ in the vicinity 
of a point $x \in \Gamma$, i.e., all tangential vector fields can be written as a linear combination $v = \sum_{i=1}^2 v_i t_i$ with coordinate functions $v_i$. We then have the identity  
\begin{equation}\label{eq:riemann-a}
R(t_2,t_1,t_1) \cdot t_2  = R(t_1,t_2,t_2) \cdot t_1 = K
\end{equation}
where $K = \kappa_1 \kappa_2$ is the Gauss curvature
and it also holds
\begin{equation}\label{eq:riemann-b}
R(t_2 , t_1, t_2)\cdot t_2 = R(t_1 , t_2, t_1)\cdot t_1 = 0
\end{equation}
and we get the corresponding identities if we interchange $t_1$ and $t_2$.
In verification of \eqref{eq:riemann-a} we directly obtain
\begin{align}
R(t_2,t_1,t_1) \cdot t_2 &= (t_1 \cdot \kappa \cdot t_1) (t_2 \cdot \kappa \cdot t_2) 
-  (t_2 \cdot \kappa \cdot t_1) (t_2 \cdot \kappa \cdot t_2) = \det (\kappa) = 
\kappa_1 \kappa_2
\end{align}
where $\text{det}(\kappa)$ is the determinant of the $2\times 2$ tangential part of $\kappa$ and we used the fact that the matrix $T=[t_1, t_2]$ 
is orthogonal and $\det(T^T \kappa T) = \det \kappa$.
For \eqref{eq:riemann-b} we get
\begin{align}
R(t_1 , t_2, t_1)\cdot t_1 
=  ( t_2 \cdot \kappa \cdot t_1) (t_1\cdot \kappa \cdot t_1) 
-  (t_1 \cdot \kappa \cdot t_1) (t_2 \cdot \kappa \cdot t_1) 
= 0
\end{align}
and we note that both verifications hold also if we switch $t_1$ and $t_2$.

If $\Ds v = 0$ we have $D_a v = 0$ for all tangential vector fields 
$a$ and thus we can conclude that $R(a,b,v) = 0$ for all tangential 
vector fields $a,b$. Expanding $v$ in the orthonormal frame we also 
have the identity 
\begin{align}
0 = R(a,b,v)\cdot w &= R(a,b,\sum_{i=1}^2 v_i t_i)\cdot w
= \sum_{i=1}^2 v_i R(a,b, t_i)\cdot w
\end{align}
Setting $a = t_1$, $b=t_2$, and  $w=t_1$  we get
\begin{equation}
0= v_2 K
\end{equation}
and setting $a = t_2$, $b=t_1$, and  $w=t_2$  we get
\begin{equation}
0= v_1 K
\end{equation}
We can therefore conclude that in a point with nonzero Gauss curvature 
a covariantly constant vector field must be zero. For any closed compact 
smooth surface embedded in $\IR^3$ there is at least one point 
$x\in \Gamma$ where $K\neq 0 $, see \cite[Theorem 4, p. 88]{Thorpe}, and 
thus $v(x)=0$ which concludes the verification of the claim in Step 1.

\paragraph{Step 2.}
\emph{Claim: if $v\in \mcT^1_{\mathrm{tan}}$ is a smooth tangential vector 
field which is covariantly constant, $\DPs v =0$, and there exists a point $x\in\Gamma$ such that $v(x) = 0$, then $v(y)=0$ for all $y\in \Gamma$.}

We will use so called parallel transport of vectors along curves to verify this claim. First using the 
fact that a closed compact manifold is geodesically complete in the sense that each point 
$y \in \Gamma$ is connected to $x$ by a geodesic, i.e., a length minimizing curve: $\gamma:I \ni t \rightarrow \gamma(t) \in \Gamma$ where $I=[a,b]$ is an interval in $\IR$ and $\gamma(a) = x$, $\gamma(b) = y$. 
Consider now the transport problem: find $w \in \{T_{x}(\Gamma) : x \in \gamma\}$  
such that 
\begin{equation}\label{eq:parallel-transport}
\text{$D_{\dot{\gamma}} w = 0$ on $\gamma$}, \qquad w(a)=v(x)
\end{equation}
where $\dot{\gamma} =\frac{d\gamma}{dt}$ is the tangent vector to $\gamma$. We note that 
$\frac{d w \circ \gamma}{d t} = \partial_{\dot{\gamma}} w$ and thus 
$D_{\dot{\gamma}} w = P \frac{d w\circ \gamma}{d t}$. Setting $w\circ \gamma(t) 
= \sum_{i=1}^2 w_i(t) t_i \circ \gamma(t)$ we get 
\begin{equation}
0 = P \frac{d w\circ \gamma}{d t} =  \sum_{i=1}^2 \frac{dw_i(t)}{dt} t_i\circ \gamma(t)
+ w_i(t) (D_{\dot{\gamma}(t)} t_i)
\end{equation}
and using the fact that $\{t_i\}_{i=1}^2$ is orthonormal we obtain 
\begin{equation}
\frac{dw_i(t)}{dt}
+ \sum_{j=1}^2  w_j(t) (D_{\dot{\gamma}(t)} t_j)\cdot t_i
\end{equation}
which is a standard system of linear ordinary differential equations with a unique solution 
since the coefficients are smooth. We say that $w$ is the parallel transport of $v$ along 
the curve $\gamma$. Now let $w_1$ and $w_2$ be solutions to (\ref{eq:parallel-transport}) 
with initial data $v_1$ and $v_2$, we then have 
\begin{equation}
\frac{d (w_1\cdot w_2)}{dt} 
= \frac{d w_1}{d t} \cdot w_2 + w_1 \cdot \frac{ d w_2}{dt} 
=(P\frac{d w_1}{d t} ) \cdot w_2 + w_1 \cdot ( P \frac{ d w_2}{dt} )  
=0
\end{equation} 
where we used the fact that $w_1$ and $w_2$ are tangent vectors to insert $P$. Thus 
the scalar product of $w_1$ and $w_2$ is constant along $\gamma$ and in particular 
we have $\| w(t) \|_{\IR^3} = \|v(x)\|_{\IR^3}$. We conclude that $v(y)=0$, since $v(x)=0$ and 
$v(y)$ is obtained by parallel transport of $v(x)$ along $\gamma$  since $D_\Gamma v =0$ on 
$\Gamma$ implies $D_{\dot{\gamma}} v =0$ on $\gamma$.

\paragraph{Step 3.} Using the fact that smooth tangent vector fields are dense in $\Honet$ 
the desired result follows.
\end{proof}

\begin{lem}[Poincaré Inequality]\label{lem:poincare-cont} For all $v \in \Honet$ there is a constant 
such that
\begin{equation}\label{eq:poincare-cont}
\| v \|_\Gamma \lesssim \| \Ds v \|_\Gamma 
\end{equation}
\end{lem}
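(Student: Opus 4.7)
The plan is to prove the Poincaré inequality by a standard compactness/contradiction argument, using Lemma \ref{lem:constant-fields} to dispose of the limiting case.

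Suppose the inequality fails. Then there exists a sequence $\{v_n\} \subset \Honet$ with $\|v_n\|_\Gamma = 1$ and $\|\Ds v_n\|_\Gamma \to 0$. In particular, $\{v_n\}$ is bounded in $\Honet$. The first step is to upgrade this bound to one that makes the standard Rellich--Kondrachov compactness applicable. Since $\Gamma$ is smooth and compact, the curvature tensor $\kappa$ is uniformly bounded, and the decomposition $\partial_a v = D_a v + (n\otimes n)\partial_a v$ combined with the identity $n\cdot\partial_a v = -v\cdot\kappa\cdot a$ (derived from $v\cdot n=0$, exactly as used in the verification inside the proof of Lemma \ref{lem:constant-fields}) yields the equivalence
\begin{equation}
\| v\otimes\nablas \|_\Gamma^2 \sim \|\Ds v\|_\Gamma^2 + \|v\|_\Gamma^2 \qquad \forall v \in \Honet.
\end{equation}
Thus $\Honet$ embeds continuously into $[H^1(\Gamma)]^3$ as a closed subspace, and the sequence $\{v_n\}$ is bounded there.

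Next I would invoke the Rellich--Kondrachov theorem for $H^1(\Gamma)$ on the compact surface $\Gamma$ componentwise. Extracting a subsequence (not relabeled) I obtain $v_n \to v$ strongly in $[L^2(\Gamma)]^3$ and $v_n \rightharpoonup v$ weakly in $\Honet$ for some tangential limit $v$. Strong $L^2$ convergence gives $\|v\|_\Gamma = 1$, while weak lower semicontinuity of the seminorm $\|\Ds\,\cdot\,\|_\Gamma$ together with $\|\Ds v_n\|_\Gamma \to 0$ yields $\|\Ds v\|_\Gamma = 0$, i.e.\ $\Ds v = 0$.

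By Lemma \ref{lem:constant-fields}, this forces $v = 0$, contradicting $\|v\|_\Gamma = 1$. The main obstacle is the first step, namely the norm equivalence between the covariant $H^1_{\text{tan}}$ norm and the componentwise $H^1$ norm on tangential vector fields; once that is in hand, the Rellich compactness applies verbatim and the rest is a standard weak-limit argument closed off by the already proved rigidity result for covariantly constant fields.
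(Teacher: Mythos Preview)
Your proof is correct and follows essentially the same compactness/contradiction route as the paper: normalize, invoke Rellich, and close with Lemma~\ref{lem:constant-fields}. You are in fact slightly more careful than the paper, explicitly justifying the compact embedding $\Honet\hookrightarrow L^2(\Gamma)$ via the elementary norm equivalence and spelling out the weak lower semicontinuity step; the paper simply cites Rellich and asserts $\|\Ds w\|_\Gamma=0$ for the limit without further comment.
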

\begin{proof} Assume that (\ref{eq:poincare-cont}) does not hold. Then there is 
a sequence $\{v_k\}_{k=1}^\infty$ in $\Honet$ such that 
\begin{equation}
\| v_k \|_\Gamma \geq k \| \Ds v_k \|_\Gamma 
\end{equation}
Setting $w_k = v_k/\| v_k \|_\Gamma$ we obtain 
\begin{equation}
\| \Ds w_k \|_\Gamma \leq k^{-1}  
\end{equation}
and therefore $\{w_k\}_{k=1}^\infty$ is bounded in $\Honet$. Using Rellich's compactness theorem, see \cite[Ch. 4, Prop. 4.4]{MR1395148}, there is a subsequence $\{w_{k_j}\}_{j=1}^\infty$ and a tangential vector field $w \in L^2(\Gamma)$ such that
\begin{equation}
w_{k_j} \rightarrow w\qquad \text{in $L^2(\Gamma)$}
\end{equation} 
Then $\| w \|_\Gamma = 1$ and $\| \Ds w \|_\Gamma = 0$ but this is a contradiction 
in view of Lemma \ref{lem:constant-fields}.
\end{proof}

\begin{lem}[Sobolev Norm Equivalence]\label{lem:H-Ht-equivalence} For all tangential 
vector fields $v \in H^m_{\mathrm{tan}}(\Gamma)$, and $m=1,2,\dots$ there are 
constants such that
\begin{align}\label{eq:GradCov-equiv}
\| \nablas^m v \|_\Gamma 
&\lesssim 
\sum_{k=0}^m \| \Ds^k v \|_\Gamma
\\ \label{eq:GradCov-equiv2}
\| \Ds^m v \|_\Gamma
& \lesssim
\sum_{k=0}^m \| \nablas^k v \|_\Gamma 
\end{align}
and as a consequence
\begin{equation}\label{eq:H-Ht-equivalence}
\| v \|_{H^m(\Gamma)} \sim \| v \|_{H^m_{\mathrm{tan}}(\Gamma)}
\end{equation}
\end{lem}
\begin{proof}
Let $X\in\mcT^n_{\mathrm{tan}}$ be a smoothly varying tangential tensor on $\Gamma$.
\paragraph{Bound (\ref{eq:GradCov-equiv}).}
Taking $k$ derivatives on $X$,
adding and subtracting a projection on the innermost derivative and using the triangle inequality we 
obtain
\begin{align}\label{eq:gkdsj}
\| \nablas^k X \|_\Gamma &=  \| \nablas^{k-1}(\projs (\nablas X) + (I -\projs) (\nablas X) ) \|_\Gamma
\\ \label{eq:gkdsj2}
&\leq \| \nablas^{k-1}(\Ds X ) \|_\Gamma + \|\nablas^{k-1}((I -\projs) (\nablas X)) \|_\Gamma
\end{align}
where we in the first term use the definition of the covariant derivative $\Ds X = \projs (\nablas X)$. 
Next we show that the second term is actually of lower order. Expressing $X$ using the canonical expansion in the spanning set, see \eqref{eq:tan-tensor-field}, we by the product rule have the total derivative
\begin{align}
\nablas X
&=
\sum_{\mathclap{i_1,\dots,i_n,j=1}}^3 \
\partial_{p_{j}} \left(X_{i_1 \cdots i_n}\right)
\underbrace{ p_{i_1} \otimes \cdots \otimes p_{i_n}  \otimes p_{j}}_{\in\mcT^{n+1}_{\mathrm{tan}}}
+
X_{i_1 \cdots i_n}
\partial_{p_{j}}(p_{i_1} \otimes \cdots \otimes p_{i_n} ) \otimes p_{j}
%\\&
%=
%\underbrace{
%\sum_{\mathclap{i_1,\dots,i_n,j=1}}^3  \
%\partial_{p_{j}} \left(X_{i_1 \cdots i_n}\right)
%p_{i_1} \otimes \cdots \otimes p_{i_n}  \otimes p_{j}
%+
%X_{i_1 \cdots i_n}
%\projs\left(
%\partial_{p_{j}}(p_{i_1} \otimes \cdots \otimes p_{i_n} ) \otimes p_{j}
%\right)}_{=\projs (\nablas X)=\Ds X}
%\\&\qquad+ \nonumber
%\underbrace{
%\sum_{\mathclap{i_1,\dots,i_n,j=1}}^3 \
%X_{i_1 \cdots i_n}
%(I - \projs)\left(
%\partial_{p_{j}}(p_{i_1} \otimes \cdots \otimes p_{i_n} ) \otimes p_{j}
%\right)
%}_{=(I-\projs) (\nablas X)}
\end{align}
As the first term in this sum is tangential we after subtraction of a projection get the expression
\begin{align}
(I-\projs) (\nablas X)
&=
\sum_{\mathclap{i_1,\dots,i_n,j=1}}^3 \
X_{i_1 \cdots i_n}
(I - \projs)\left(
\partial_{p_{j}}(p_{i_1} \otimes \cdots \otimes p_{i_n} ) \otimes p_{j}
\right)
\end{align}
which has no derivatives acting on the coordinates of $X$. 
Furthermore, we note that we have the identities
\begin{equation}
(I-P) \partial_{p_j} p_i = - \kappa_{ij} n = - \sum_{k=1}^3 \kappa_{ij} n_k e_k \, , 
\qquad 
p_j = \sum_{l=1}^3 p_{jl} e_l
\end{equation}
where the expansion coefficients are smooth since $\Gamma$ is smooth. Thus there are smooth 
functions $\alpha_{i_1\dots i_n,k_1\dots k_n,l}$  such that 
\begin{equation}
 (I - \projs)\left(\sum_{j=1}^3
\partial_{p_{j}}(p_{i_1} \otimes \cdots \otimes p_{i_n} ) \otimes p_{j} \right)
= \alpha_{i_1\dots i_n,k_1\dots k_n,l} e_{k_1} \otimes \cdots \otimes e_{k_n} \otimes e_l 
\end{equation}
Defining the smooth $2n +1$ tensor $A$ by 
\begin{equation}
A = \alpha_{i_1\dots i_n,k_1\dots k_n,l} (e_{i_1}\otimes \dots \otimes e_{i_n} ) 
 \otimes (e_{k_1} \otimes \cdots \otimes e_{k_n})  \otimes e_l 
\end{equation}
we have the identity 
\begin{equation}
e_{i_1}\otimes \dots \otimes e_{i_n} \cdot_n A = \alpha_{i_1\dots i_n,j;k_1\dots k_n,l} e_{k_1} \otimes \cdots \otimes e_{k_n} \otimes e_l 
\end{equation}
and thus using the canonical expansion of the tangential tensor $X$ we obtain the identity
\begin{equation}
X \cdot_n A = (I-\projs) (\nablas X)
\end{equation}
Using the product rule we get
\begin{align}
\| \nabla_\Gamma^{k-1}  (I-\projs) (\nablas X) \|_\Gamma 
&=
\| \nabla_\Gamma^{k-1}  (X\cdot_n A)  \|_\Gamma
\\
&\lesssim 
\sum_{l=0}^{k-1} \| \nabla_\Gamma^{l} X \|_\Gamma \underbrace{ \| \nabla_\Gamma^{k-1-l} A  \|_{L^\infty(\Gamma)}}_{\lesssim 1}
\\
&\lesssim 
\sum_{l=0}^{k-1} \| \nabla_\Gamma^{l} X \|_\Gamma 
\end{align}
Combined with \eqref{eq:gkdsj}--\eqref{eq:gkdsj2} this yields 
\begin{align}\label{eq:sobeqv-f}
\| \nablas^k X \|_\Gamma
&\lesssim \| \nablas^{k-1}(\Ds X ) \|_\Gamma + \sum_{l=0}^{k-1}  \|\nablas^{l} X \|_\Gamma
\end{align}
with a constant depending only on $\Gamma$. Inequality \eqref{eq:GradCov-equiv} now follows by 
induction. For $k=1$ estimate \eqref{eq:GradCov-equiv} follows directly from (\ref{eq:sobeqv-f}). 
Assuming that \eqref{eq:GradCov-equiv} holds for $k-1$ we have the estimate
\begin{align}
\| \nablas^k X \|_\Gamma
&\lesssim \| \nablas^{k-1}(\Ds X ) \|_\Gamma + \sum_{l=0}^{k-1}  \|\nablas^{l} X \|_\Gamma
\\
&\lesssim 
\sum_{l=0}^{k-1}  \|\Ds^{l} (D_\Gamma X) \|_\Gamma + \sum_{l=0}^{k-1}  \|\Ds^{l} X \|_\Gamma
\\
&\lesssim 
\sum_{l=0}^{k}  \|\Ds^{l} X \|_\Gamma
\end{align}
and thus  \eqref{eq:GradCov-equiv} holds for $k$ as well.
\paragraph{Bound (\ref{eq:GradCov-equiv2}).}
By adding and subtracting $\nablas X$ inside the gradients and applying the triangle inequality we have
\begin{align}\label{eq:bdfse}
\| \nablas^{k-1} \Ds X \|_\Gamma &\leq \| \nablas^{k-1}(\nablas X ) \|_\Gamma + \|\nablas^{k-1}((I -\projs) (\nablas X)) \|_\Gamma
\\&
\lesssim
\| \nablas^{k} X \|_\Gamma
+
\|\nablas^{k-1} X \|_\Gamma
\end{align}
where we use the same lower order bound on the second term in \eqref{eq:bdfse} as above.
The inequality readily follows by iterating this formula, starting with $k=1$ and $X=\Ds^{m-1}v$,
and applying the Poincaré inequality $\|v\|_\Gamma \lesssim \| v\otimes \nablas\|_\Gamma$. This Poincaré inequality clearly holds as we by Lemma~\ref{lem:poincare-cont} have
\begin{align}
\| v \|_\Gamma^2 \lesssim \| \Ds v \|_\Gamma^2
\leq
\| \Ds v \|_\Gamma^2 + 
\underbrace{
\| (I-P)( v\otimes\nablas) \|_\Gamma^2
}_{\geq 0} 
= \| v\otimes \nablas  \|_\Gamma^2
\end{align}
by the orthogonality between tangential and non-tangential tensors.
\end{proof}

\subsection{Vector Laplacians} \label{section:vector-laplacians}

\paragraph{Standard Formulation.}
We consider the variational problem: Find $u \in \Honet$ such that 
\begin{equation}\label{eq:problem}
a(u,v) = l(v) \qquad \forall v \in \Honet
\end{equation}
where the forms are given by
\begin{equation}
a(u,v) = (\DPs(u_t),\DPs(v_t))_\Gamma ,  \qquad 
l(v) = (f,v_t)_\Gamma
\end{equation}
and $f$ is a given tangential vector field in $H^{-1}_{\mathrm{tan}}(\Gamma)$. Here we introduced the notation
\begin{align}
u_t = P u
\end{align}
which at first sight seems superfluous as $u,v,f$ are already tangential. However, the added projections make the above forms well defined also for functions in $H^1(\Gamma)$ which will allow us to deal with these forms and its discrete counterpart in a systematic fashion.

Using the Poincaré inequality (see Lemma \ref{lem:poincare-cont}) together with the Lax--Milgram lemma we 
conclude that this problem has a unique solution $u \in \Honet$.

\paragraph{Elliptic Shift.}
For smooth surfaces we have the following elliptic shift property 
\begin{equation}\label{eq:elliptic-shift}
\| u \|_{H^{s+2}_{\mathrm{tan}}(\Gamma)} 
\leq C \left( \| f \|_{H^{s}_{\mathrm{tan}}(\Gamma)} + \| u \|_{H^{s}_{\mathrm{tan}}(\Gamma)}\right)
%, \qquad -1 \leq s
\end{equation} 
with a positive constant $C=C(\Gamma,s)$, see \cite[Fundamental Inequality 6.29]{Wa83}.
For $f\in L^2(\Gamma)$, i.e., $s=0$, this shift implies
\begin{equation}\label{eq:elliptic-stab}
\| u \|_{H^{2}_{\mathrm{tan}}(\Gamma)}
\lesssim
\| f \|_\Gamma
\end{equation}
as we by the Poincaré inequality \eqref{eq:poincare-cont} and the Cauchy--Schwarz inequality have
\begin{align}
\| u \|_\Gamma^2 \lesssim \| \Ds u \|_\Gamma^2 = a(u,u) = (u,f)_\Gamma \leq \| u \|_\Gamma \| f \|_\Gamma
\end{align}
which allows us to bound the lower order term in \eqref{eq:elliptic-shift} by $\| f \|_\Gamma$.

\paragraph{Symmetric Formulation.}
While we focus our presentation on the 
standard formulation \eqref{eq:problem}
we will also briefly consider a problem based on the symmetric 
part of the covariant derivative $\epss(u_t)$.
However, in contrast to the standard formulation where the kernel of the full covariant derivative $\ker(\Ds)$ by Lemma~\ref{lem:constant-fields} is empty, the kernel of the symmetric part of the covariant derivative $\ker(\epss)$ is finite dimensional albeit non-trivial and consists of so-called Killing vector fields. Simple examples include surfaces with rotational symmetries where restrictions of three dimensional rigid body rotations induce Killing vector fields on the surface.
To avoid having to deal with the peculiarities of this non-trivial kernel we for the symmetric formulation consider the following problem that also includes a zeroth order term:
Find $u \in \Honet$ such that 
\begin{equation}\label{eq:problem-sym}
\asym(u,v) = l(v) \qquad \forall v \in \Honet
\end{equation}
where the bilinear form is given by
\begin{equation}\label{eq:symmetric-form}
\asym (u,v) = (\epss(u_t),\epss(v_t))_\Gamma  + (u_t,v_t)_\Gamma
\end{equation}
where we note the presence of a zeroth order term.
To prove existence and uniqueness in this symmetric formulation we,
in addition to the results above, require a Korn's inequality $\| \Ds(u) \|_\Gamma \lesssim \| \epss(u) \|_\Gamma$ for $u\in \Honet$.
For a proof of such a Korn's inequality and further discussion on Killing vector fields, see \cite{JaOlRe17}.

\section{The Finite Element Method} \label{section:fem}

In this section we present the finite element method. First, in Section~\ref{section:fem-surface} we introduce the discrete surface approximation in the form of a parametric triangulation fulfilling certain assumptions. In Section~\ref{section:fem-space} we define the parametric finite element space on the discrete surface. The finite element method is presented in Section~\ref{sec:method} where we also consider some variations of the method.

\subsection{Triangulation of the Surface} \label{section:fem-surface}

\paragraph{Parametric Triangulated Surfaces.} 
Let $\widehat{K}\subset \IR^2$ be a reference triangle and 
let $P_{k_g}(\widehat{K})$ be the space of polynomials of 
order less or equal to $k_g$ defined on $\widehat{K}$. Let 
$\Gamma_{h,k_g}$ be a triangulated surface in $\IR^3$ with quasi 
uniform triangulation $\mcK_{h,k_g}$ and mesh parameter 
$h\in (0,h_0]$ such that each triangle $K$ can be described via a mapping $F_{K,k_g}:\widehat{K} \rightarrow K$
%, i.e., $K=F_{K,k_g}(\widehat{K})$,
where $F_{K,k_g} \in [P_{k_g}(\widehat{K})]^3$. 
Concretely, the construction of a higher-order surface triangulation is based on first generating a regular piecewise linear triangle surface mesh $\mcK_{h,1}$. We then equip each facet element $K_1 \in \mcK_{h,1}$ with the standard $k_g$:th order Lagrange basis $\{\varphi_i\}$ associated with nodes $\{x_i\}$ on $\overline{K}_1$. The higher-order geometry approximation is then defined as the Lagrange interpolant of the closest point mapping $p$, i.e.,
\begin{equation}
K_{k_g} = \left\{ x = \sum_i p(x_i) \varphi(x') \, : \, x' \in K_1  \right\}
\end{equation}
which gives us $\mcK_{h,k_g}$. Note that this is precisely the construction of the higher-order geometry approximation used in \cite{De09}.

Let $n_{h,k_g}$ be the elementwise defined normal to $\Gamma_{h,k_g}$. 
For brevity we use the notation $\mcK_h = \mcK_{h,k_g}$,
 $\Gamma_h = \Gamma_{h,k_g}$ and $n_h = n_{h,k_g}$.
We let geometric quantities derived from $\Gammah$ be indicated by subscript $h$, for example the discrete curvature tensor $\kappa_h$ and the projections $\Psh = I - \Qsh$ and $\Qsh=n_h \otimes n_h$ onto the discrete tangential plane respectively onto the discrete normal line.

\paragraph{Geometry Approximation Assumption.} We assume that the family 
$\{\Gamma_{h,k_g}, h \in (0,h_0]\}$ approximates 
$\Gamma$ in the following ways:
\begin{itemize}
\item $\Gamma_{h,k_g} \subset U_{\delta_0}(\Gamma)$ and 
$p:\Gamma_{h,k_g} \rightarrow \Gamma$ is a bijection.
\item The following bounds hold:
\begin{equation}\label{eq:geombounds}
\| p \|_{L^\infty(\Gamma_{h})}\lesssim h^{k_g+1}, 
\quad
\| n\circ p - n_h \|_{L^\infty(\Gamma_{h})}\lesssim h^{k_g},
\quad
\| \kappa \circ p - \kappa_h \|_{L^\infty(\Gamma_{h})}\lesssim h^{k_g-1}
\end{equation}
\end{itemize}
Here and below we let $a \lesssim b$ denote $a \leq C b$ with a constant $C$ independent of the mesh parameter $h$.

From \eqref{eq:geombounds} we can derive bounds for approximation of other geometric quantities, for example $\| \Ps-\Psh \|_{L^\infty(\Gamma_h)} \lesssim h^{k_g}$,
$\| \Ps\cdot\nh \|_{L^\infty(\Gamma_h)} \lesssim h^{k_g}$, and
$\| 1 - n\cdot\nh \|_{L^\infty(\Gamma_h)} \lesssim h^{k_g+1}$, see eg. \cite{De09}.

\paragraph{Broken Sobolev Spaces.}
As $\Gammah$ is only piecewise smooth we introduce the broken Sobolev space
$H^s(\mcK_h)$ on $\Gammah$ of scalar or vector valued functions with norm
\begin{equation} \label{eq:H1normh}
\| v \|^2_{H^s(\mcK_h)} 
=  
\sum_{j=0}^s \| (\nablash)^j v  \|^2_{L^2(\mcK_h)}
\end{equation}
which we note is analogously defined to \eqref{eq:H1norm} albeit on the discrete surface $\Gammah$.
Here we introduced the convention that when $\mcK_h$ is the domain of integration, element-wise integration over $\Gammah$ is implied, i.e., $\| \cdot \|_{\mcK_h}^2 = \sum_{K\in\mcK_h} \| \cdot \|_K^2$.

We also have the corresponding broken space on the exact surface $\Gamma$ denoted $H^s(\mcK_h^l)$ where $\mcK_h^l$ is defined as follows:
For any parametric triangle $K\in\mcK_{h,i}$, $1\leq i \leq k_g$, we define the lifted triangle $K^l\in\Gamma$ by $K^l=\{p(x) : x\in K\}$.
Let $\mcK_h^l=\bigcup_{K\in\mcK} K^l$ and let the norm of $H^s(\mcK_h^l)$ be given by \eqref{eq:H1norm}.
Clearly, $H^1(\Gamma) \subset H^1(\mcK_h^l)$.
In Section~\ref{section:extandlift} we also introduce the corresponding notation for the lifting of functions on $\Gammah$ onto $\Gamma$.

\subsection{Parametric Finite Element Spaces}  \label{section:fem-space}

Let
\begin{equation}\label{spaceVh}
V_{h,k_u,k_g} = \{ v: {v \vert_K \circ F_{K,k_g} \in P_{k}(\hat K),\; \forall K\in\mcK_{h,k_g}};\; v \; \in C^0(\Gamma_h)\}
\end{equation}
be the space of parametric continuous piecewise polynomials 
of order $k_u$ mapped with a mapping of order $k_g$. For 
brevity we use the simplified notation 
\begin{equation}
V_h = [V_{h,k_u,k_g}]^3
\end{equation}
Note that $V_h\subset H^1(\mcK_h) \cap C^0(\Gammah)$.

\subsection{Formulation of the Method} \label{sec:method}

\paragraph{Tangential Condition.}
While our sought solution is a vector field which is tangential to the surface, this condition is not built into our approximation space $V_h$. Instead we choose to enforce this tangent condition weakly by adding a term $s_h$, defined below, to the bilinear form which penalizes the normal component together with a suitable $h$ scaling. However, as seen in the analysis below, in order to achieve optimal order estimates, in both energy and $L^2$ norms, when using isoparametric finite elements we need to define this penalty term using a normal approximation which is at least one order higher than of the normal to the discrete surface $\Gamma_h$. We denote this normal approximation $\widetilde{n}_h$ and assume
\begin{align}
\label{eq:nhtilde-assumption}
\| n\circ p - \widetilde{n}_h \|_{L^\infty(\Gamma_{h})}\lesssim h^{k_p} \ , \quad k_p \geq k_g
\end{align}
In the case $k_p=k_g$, i.e., when $\widetilde{n}_h$ is of the same approximation order as $n_h$, we choose $\widetilde{n}_h=n_h$.
For $k_p\geq k_g+1$ we instead construct $\widetilde{n}_h$ by taking the node-wise interpolation of the exact normal $n$ using a Lagrange basis of order $k_p-1$ and normalizing this quantity.
While this construction clearly fulfills \eqref{eq:nhtilde-assumption} we discuss other choices in Remark~\ref{rem:better-normal} below.

\paragraph{The Method.}
The finite element method takes the form: Find $u_h \in V_h$ such that
\begin{equation}\label{eq:method}
A_h(u_h,v)= l_h(v)\qquad \forall v \in V_h
\end{equation}
The forms are defined by
\begin{equation}
A_h(v,w) = a_h(v,w) + s_h(v,w)
\end{equation}
with
\begin{align}\label{def:ah}
a_{h}(v,w) &= (\DPsh v_\thh, \DPsh w_\thh)_{\mcK_h}
\\
s_h(v,w) &= \beta h^{-2} (  v_{\widetilde{n}_h} , w_{\widetilde{n}_h} )_{\mcK_h}
\\
l_h(v) &= (f \circ p,v_\thh)_{\mcK_h}
\end{align}
where $\beta>0$ is a parameter. Here we used the notation 
\begin{equation}
v = \Psh v + \Qsh v = v_\thh + v_\nh \nh
\end{equation}
for the decomposition of a general vector field on $\Gammah$ into 
a tangential and a normal fields, and
\begin{equation}
v_{\widetilde{n}_h} = v \cdot {\widetilde{n}_h}
\end{equation}
for the component of a general vector field in the approximate normal direction ${\widetilde{n}_h}$.
The form $s_h$ is added to weakly enforce the tangent condition. 
Note that these forms are defined for $v,w\in H^1(\mcK_h)$
and recall that $V_h\in H^1(\mcK_h)$.

When implementing \eqref{def:ah} it is convenient to use the identity
\begin{align}\label{eq:DSh-vt-id}
\DPsh v_\thh = \Psh ( v \otimes \nablash) - \kappa_h v_{n_h}
\end{align}

\begin{rem}[Consistency]
The method (\ref{eq:method}) is inconsistent due to the geometry approximation where we simply replace $\Gamma$ with $\Gamma_h$ both in the integration domain and in the surface differential operators. As a side effect integration must be performed elementwise as we cannot evaluate the derivative of $P_h v$ over element faces as $P_h$ is discontinuous. An alternate `dG-style' derivation using Green's formula elementwise over $\Gamma_h$ would result in an additional term of the form
\begin{equation} \label{eq:dg-term}
\left(
( \DPsh v_\thh \cdot \nu_h )^+ 
+
( \DPsh v_\thh \cdot \nu_h )^- 
, w
\right)_{\mcE_h} 
\end{equation}
where $\nu_h^\pm$ are the outward pointing element conormals to the neighboring elements $K^\pm \in \mcK_h$ and $\mcE_h$ is the set of faces in $\mcK_h$.
As $(\DPsh v_\thh \cdot \nu_h )^+ + ( \DPsh v_\thh \cdot \nu_h )^- = 0$ is the natural flux conservation law over element edges, the additional term (\ref{eq:dg-term}) is zero.
\end{rem}

\begin{rem}[Symmetric Formulation] The finite element method for the symmetric formulation is 
obtained by replacing $a_h$ with the form
\begin{equation}\label{def:ah-sym}
a_{h,\text{sym}}(v,w) =(\epsPsh(v_\thh), \epsPsh(w_\thh))_{\mcK_h} + (v_\thh,w_\thh)_\Gammah
\end{equation}
where we include a zeroth order term to avoid having to deal with the non-trivial kernel of the symmetric part of the covariant derivative, i.e., $\ker(\epsilon_{\Gamma_h})$.
\end{rem}

\begin{rem}[The Penalty Term]\label{rem:better-normal}
The choice of normal $\widetilde{n}_h$ in the penalty term $s_h$ depends on
available geometry information. When the
triangulation is constructed from a parametrization of the exact surface, for instance via a CAD model, the exact normal in the nodes is typically available and we can construct $\widetilde{n}_h$ based on nodal interpolation as suggested above. In contrast to this, there are applications such as surface evolution problems where we would typically only have access to a discrete triangulated surface and thus $\widetilde{n}_h = n_h$ is a natural choice.
As we will see in the error estimates below, that choice would, however, not give optimal order convergence.
\end{rem}

\begin{rem}[A Lagrange Multiplier Approach] \label{rem:multiplier}
Another natural approach to enforcing the tangent condition is to use Lagrange multipliers, as employed in \cite{GrJaOlRe17}. The problem is then posed as the following saddle point problem:
Find $\{u,\lambda\}\in V_h \times V_{h,k_u,k_g}$ such that
\begin{alignat}{2}
a_h(u,v) + (\lambda,v_{\widetilde{n}})_\Gammah &= l_h(v) & \qquad &\forall v\in V_h
\\
(u_{\widetilde{n}},\mu)_\Gammah &= 0  & \qquad &\forall \mu\in V_{h,k_u,k_g}
\end{alignat}
where we recall that $V_h = [V_{h,k_u,k_g}]^3$. In the numerical results section we briefly consider this alternate approach.
\end{rem}

%\begin{rem}[Embedded Methods]
%As we are formulating our operators using tangential calculus the above forms are well defined also on an embedding $H^1$ space. Thus, the formulation is easily adapted to an embedded method by
%including some stabilization of the embedding finite elements in the surface normal direction, see \cite{BuHaLaMa16,LaZa17}.
%\end{rem}

\section{Preliminary Results} \label{section:prel-results}
In this section we present preliminary results, which are necessary in the analysis, albeit not directly associated to the vector Laplace problem.
To be able to compare functions defined on the continuous surface $\Gamma$ with functions defined on discrete approximations of $\Gamma$ we collect basic results regarding extension and lifting of functions  in Section~\ref{section:extandlift} and equivalences between norms defined on the respective surfaces  in Section~\ref{section:norm-equivalences}. In Section~\ref{section:non-standard} we present a non-standard geometry approximation result adapted from \cite{LaLa17}, which is required in the proofs of our error estimates below.
A suitable interpolant and properties thereof is given in Section~\ref{section:inpln}.

\subsection{Extension and Lifting of Functions} \label{section:extandlift}

We here summarize basic results concerning 
extension and liftings of functions, and we refer to 
\cite{BuHaLa14} and \cite{De09} for further details.

\paragraph{Extension of Scalar Valued Functions.}
Recalling the definition $v^e = v \circ p$ of the extension 
and using the chain rule we obtain the identity 
\begin{equation}\label{eq:tanderext}
\nablash v^e = B^T \nablas v  
\end{equation}
where 
\begin{equation}\label{Bmap}
B = \Ps(I - \rho \kappa)\Psh : T_x(K)\rightarrow
T_{p(x)} (\Gamma)
\end{equation}
and $\kappa = \nabla \otimes \nabla \rho$ is the curvature tensor defined in 
(\ref{eq:curvature-tensor}).
We note that there is $\delta>0$ such that the uniform 
bound 
\begin{equation}\label{eq:kappa-bound}
\|\kappa \|_{L^\infty(U_\delta(\Gamma))}\lesssim 1
\end{equation}
holds. Furthermore, we show below that 
$B:T_{x}(K) \rightarrow T_{p(x)} (\Gamma)$ is  invertible for 
$h \in (0,h_0]$ with $h_0$ small enough, i.e, there is
 $B^{-1}:   T_{p(x)} (\Gamma) \rightarrow  T_{x}(K)$ such that 
 \begin{equation}
 B B^{-1} = P, \qquad B^{-1} B = P_h
 \end{equation}

\paragraph{Lifting of Scalar Valued Functions.}
The lifting $w^l$ of a function $w$ defined on $\Gamma_h$ 
to $\Gamma$ is defined as the push forward
\begin{equation}
(w^l)^e = w^l \circ p = w \quad \text{on $\Gamma_h$}
\end{equation}
For the derivative it follows that 
\begin{equation}
\nablash w = \nablash (w^l)^e = B^T \nablas (w^l) 
\end{equation}
and thus 
\begin{equation}\label{eq:tanderlift}
 \nablas (w^l) = B^{-T} \nablash w 
\end{equation}

\paragraph{Extension and Lifting of Vector Valued Functions}

We employ component-wise lifting and extension of vector valued functions 
which directly give the identities:
\begin{alignat}{2}
\label{eq:ext-der-vector}
v^e \otimes \nablash &= (v \otimes \nablas) B \qquad &&v \in H^1(\mcK_h^l)
\\
\label{eq:lift-der-vector}
v^l \otimes \nablas &= (v \otimes \nablash) B^{-1}  \qquad &&v \in H^1(\mcK_h)
\end{alignat}

%\subsection[Estimates Related to $B$]{Estimates Related to \texorpdfstring{$\boldsymbol{B}$}{B}}
%\label{sec:Bbounds}

\begin{lem}[Estimates Related to $\boldsymbol B$] \label{lem:Bbounds}
We have the following bounds 
\begin{alignat}{2}
\label{B-uniform-bounds}
  \| B \|_{L^\infty(\Gamma_h)} &\lesssim 1,
 \qquad &\| B^{-1} \|_{L^\infty(\Gamma)} 
 &\lesssim 1
\\
\label{eq:B-PPh}
\| \Ps\Psh - B \|_{L^\infty(\Gamma)} 
 &\lesssim h^{k_g+1}, 
 \qquad\quad
 &\| \Psh\Ps -  B^{-1} \|_{L^\infty(\Gamma_h)} 
 &\lesssim h^{k_g+1}
\end{alignat}
For the surface measures on $\Gamma$ and $\Gammah$ 
we have the identity  
\begin{equation}\label{eq:measure}
d \Gamma = \left| P_h (p \otimes \nablas) \right|
= \left|B\right| d \Gammah
\end{equation}
where $\left|B\right| =| \mathrm{det}(B)|$
%is the absolute value of the determinant of $B$
and we have the estimates
\begin{equation}\label{eq:B-detbound}
\left\| 1 - |B| \right\|_{L^\infty(\Gamma_h)} \lesssim h^{k_g+1}, 
\qquad \left\| |B| \right\|_{L^\infty(\Gamma_h)} \lesssim 1,
\qquad \left\| |B|^{-1} \right\|_{L^\infty(\Gamma_h)} \lesssim 1
\end{equation}
\end{lem}
\begin{proof}
{\bf Estimates (\ref{eq:B-PPh}).} The first estimate follows directly from (\ref{eq:geombounds}) and the bound \eqref{eq:kappa-bound},
\begin{equation}\label{eq:B-PPh-a}
\|B - PP_h\|_{L^\infty(\Gammah)} 
= \| \rho \kappa P_h \|_{L^\infty(\Gammah)}  \lesssim h^{k_g+1}
\end{equation}
For the second estimate we first note that for $\xi \in T_x(K)$ we have  the 
bound
\begin{equation}\label{eq:B-coer}
\| \xi \|_{\IR^3}  \lesssim \| B \xi \|_{\IR^3} 
\end{equation} 
since
\begin{align}
\| B \xi \|_{\IR^3} &= \|P(I- \rho \kappa) P P_h \xi \|_{\IR^3}  
\\
&\gtrsim  \|PP_h \xi \|_{\IR^3} - h^{k_g+1} \|P_h \xi \|_{\IR^3}  
\\
&=  \|(I-Q)P_h \xi \|_{\IR^3}  - h^{k_g+1} \|P_h \xi \|_{\IR^3} 
\\
&\gtrsim  \|P_h \xi \|_{\IR^3}  -  h^{k_g} \|P_h \xi \|_{\IR^3}  - h^{k_g+1} 
\|P_h \xi \|_{\IR^3} 
\\
&\gtrsim  \| \xi \|_{\IR^3} 
\end{align}
for $h\in(0,h_0]$ with $h_0$ small enough. Thus it follows from 
(\ref{eq:B-coer}) that  $B$ is invertible and for 
$\eta \in T_{p(x)}(\Gamma)$ we have the estimate
\begin{align}
\| (B^{-1} - P_h P )  \eta \|_{\IR^3} 
&\lesssim 
 \| B (B^{-1} - P_h P )  \eta \|_{\IR^3}
 \\
 &\lesssim 
 \| (P - B P_h P )  \eta \|_{\IR^3}
 \\
 &= \| (P - B)  \eta \|_{\IR^3}
 \\
 &\lesssim  \| (P - PP_h)\eta \|_{\IR^3} + \|(P P_h - B)  \eta \|_{\IR^3}
 \\
  &\lesssim  \| P Q_h P \eta \|_{\IR^3} + \|(P P_h - B)  \eta \|_{\IR^3}
\\
&\lesssim ( h^{2kg} + h^{k_g + 1} ) \| \eta \|_{\IR^3}
\end{align}
where we first used (\ref{eq:B-coer}) and then 
(\ref{eq:geombounds}) and  (\ref{eq:B-PPh-a}). 
It thus follows that, in the operator norm, 
\begin{equation}
\| (B^{-1} - P_h P ) \|_{\IR^3}  
= 
\sup_{\eta \in T_{p(x)}(\Gamma)} 
 \frac{\| (B^{-1} - P_h P )  \eta \|_{\IR^3} }{\| \eta\|_{\IR^3}} 
\lesssim 
 h^{2kg} + h^{k_g + 1}
 \lesssim 
 h^{k_g + 1}
\end{equation}
for $k_g\geq 1$.

\paragraph{Estimates (\ref{B-uniform-bounds}).} These estimates readily follow by adding and subtracting $P P_h$ respectively $P_h P$, applying the triangle inequality and using the bounds \eqref{eq:B-PPh}.

\paragraph{Estimates (\ref{eq:B-detbound}).} The proof in \cite[Section 3.3]{BuHaLa14} combined with the higher-order geometry bounds \eqref{eq:geombounds} yield these estimates.
\end{proof}

\subsection{Norm Equivalences} \label{section:norm-equivalences}
In order to conveniently deal with extensions and liftings we will 
write $v = v^e$ and $v = v^l$ when there is no risk for confusion. 
In this way we may think of functions as being defined both on $\Gamma$ 
and $\Gammah$ and we can form the sum of function spaces on $\Gamma$ 
and $\Gammah$, for instance, $L^2(\Gamma) + L^2(\Gammah)$ or $H^1_\mathrm{tan}(\Gamma) + V_h$. 
In view of the bounds in Lemma~\ref{lem:Bbounds} and the 
identities (\ref{eq:tanderext}) and (\ref{eq:tanderlift}) 
we obtain the following equivalences for scalar valued functions $v \in H^1(\Gamma) + H^1(\Gammah)$
\begin{align}
\label{eq:normequ}
\| v \|_{L^2(\Gamma)}
\sim \| v \|_{L^2(\Gammah)} \qquad \text{and} \qquad
\| \nablas v \|_{L^2(\Gamma)} &\sim
\| \nablash v \|_{L^2(\Gamma_h)}
\end{align}

\paragraph{Norm Equivalences for Vector Valued Functions.} The above equivalences directly translate to the following equivalences for vector valued functions $v\in H^1(\mcK_h^l) + H^1(\mcK_h)$
\begin{align}
\label{eq:normequ-vec}
\| v \|_{L^2(\mcK_h^l)}
\sim \| v \|_{L^2(\mcK_h)} \qquad \text{and} \qquad
\| v \otimes \nablas \|_{L^2(\mcK_h^l)} &\sim
\| v \otimes \nablash \|_{L^2(\mcK_h)}
\end{align}

\subsection{Non-standard Geometry Approximation} \label{section:non-standard}
To achieve optimal estimates in our proofs below we utilize the $P_h\cdot n$ geometry approximation lemma introduced in \cite[Lemma~3.2]{LaLa17}. We state this lemma below in a slightly extended form and we also supply a proof adapted to higher-order geometry approximations.
\begin{lem}[${\boldsymbol P}_{\boldsymbol h}\cdot {\boldsymbol n}$ Geometry Approximation]
\label{lem:Phn}
For $\chi\in [W_1^1(\Gamma)]^3$ and the approximate surface $\Gammah$ fulfilling the bounds in \eqref{eq:geombounds} it holds
\begin{align}
\label{eq:Phn-est}
\left|
( P_h\cdot n, \chi^e )_\Gammah
\right|
&\lesssim h^{k_g+1}
\| \chi \|_{W_1^1(\Gamma)}
\end{align}
where  $\| \chi \|_{W_1^1(\Gamma)} = \| \chi \|_{L^1(\Gamma)} + \| \chi \otimes\nablas \|_{L^1(\Gamma)}$.
As a consequence the corresponding estimate with $P\cdot n_h$ also holds, i.e.
\begin{align}
\label{eq:Pnh-est}
\left|
( P\cdot n_h, \chi^e )_\Gammah
\right|
&\lesssim h^{k_g+1}
\| \chi \|_{W_1^1(\Gamma)}
\end{align}
\end{lem}
\begin{proof}
\textbf{Estimate (\ref{eq:Phn-est}).\ }
Using Green's formula elementwise  gives the identity
\begin{align}
( P_h \cdot n, \chi^e )_\Gammah
&=
( P_h \nabla\rho, \chi^e )_\Gammah
\\&=
( \nablash \rho, \chi^e )_\Gammah
\\&=
\underbrace{( \rho , \mathrm{tr}(\kappa_h)(n_h\cdot\chi^e) - \nablash \cdot\chi^e )_{\mcK_h}}_{I}
+
\underbrace{( \rho (\nu_h^+ + \nu_h^-), \chi^e )_{\mcE_h}}_{II}
\end{align}
where $\mcE_h$ is the union of the set of (parametrically mapped) faces in $\mcK_h$ and $\nu_h^\pm$ are the conormals of two elements $K_{k_g}^\pm$ sharing a face.

\paragraph{Term $\boldsymbol I$.} By Hölder's inequality and bounds on $n_h$, $\kappa_h$, and $\rho$ we have the estimate
\begin{align}
|I|
&\lesssim
\underbrace{\| \rho \|_{L^\infty(\Gammah)}}_{\lesssim h^{k_g+1}}
\| \mathrm{tr}(\kappa_h)(n_h\cdot\chi^e) - \nablash \cdot\chi^e \|_{L^1(\Gammah)}
%\\&
\lesssim h^{k_g+1} \| \chi \|_{W_1^1(\Gammah)}
\end{align}
where we recall that $\| \chi^e \|_{W_1^1(\Gammah)} = \| \chi^e \|_{L^1(\Gammah)} + \| \chi^e \otimes\nablash \|_{L^1(\mcK_h)}$.

\paragraph{Term $\boldsymbol I \boldsymbol I$.} Using Hölder's inequality and a trace inequality give
\begin{align}
|II|
%&=\left| ( \rho (\nu_h^+ + \nu_h^-), \chi^e )_{\mcE_h} \right |
%\\
&\lesssim
\underbrace{\| \rho \|_{L^\infty(\mcE_h)}}_{\lesssim \| \rho \|_{L^\infty(\Gammah)}}
\| \nu_h^+ + \nu_h^- \|_{L^\infty(\mcE_h)}
\| \chi^e \|_{L^1(\mcE_h)}
\\&\lesssim
\underbrace{\| \rho \|_{L^\infty(\Gamma_h)}}_{\lesssim h^{k_g+1}}
%h^{k_g+1}
\| \nu_h^+ + \nu_h^- \|_{L^\infty(\mcE_h)}
\underbrace{
\left(
h^{-1}\| \chi^e \|_{L^1(\mcK_h)}
+
\| \chi^e \otimes\nablash \|_{L^1(\mcK_h)}
\right)
}_{\lesssim h^{-1}\| \chi^e \|_{W_1^1(\Gammah)}}
\\&\lesssim
h^{k_g}
%\| \rho \|_{L^\infty(\mcE_h)}
\| \nu_h^+ + \nu_h^- \|_{L^\infty(\mcE_h)}
\| \chi^e \|_{W_1^1(\Gammah)}
\end{align}
where it now remains to estimate the conormal term.
Letting $\nu^\pm$ denote the conormal to the lifted triangle $(K_{k_g}^\pm)^l\subset\Gamma$ we note that $\nu^+ + \nu^- = 0$. Hence, by subtracting $\nu^+ + \nu^-$, using the triangle inequality and bounds on the
conormal approximation we have
\begin{align}
\| \nu_h^+ + \nu_h^- \|_{L^\infty(\mcE_h)}
\leq
\| \nu_h^+ - \nu^+ \|_{L^\infty(\mcE_h)}
+
\| \nu_h^- - \nu^- \|_{L^\infty(\mcE_h)}
\lesssim h^{k_g}
\end{align}
and thus $II\lesssim h^{2k_g} \| \chi^e \|_{W_1^1(\Gammah)}$.

The proof of \eqref{eq:Phn-est} is completed by the equivalence $\| \chi^e \|_{W_1^1(\Gammah)} \sim \| \chi \|_{W_1^1(\Gamma)}$ which holds
in view of the bounds in Lemma~\ref{lem:Bbounds}.

\paragraph{Estimate (\ref{eq:Pnh-est}).}
This estimate readily follows by noting that
$P_h \cdot n + P \cdot n_h = (n + n_h)(1 - n\cdot n_h)$ where
$\| 1 - n\cdot n_h \|_{L^\infty(\Gammah)} \lesssim h^{k_g+1}$.
Thus, by adding and subtracting suitable terms,
applying the triangle inequality and Hölder's inequality,
we may, without loosing approximation order,
move over to a term on the form $|(P_h \cdot n,\chi^e)_{\Gamma_h}|$ to which we apply \eqref{eq:Phn-est}.
\end{proof}

\begin{rem}
Clearly, Lemma~\ref{lem:Phn} also holds for $\chi\in H^1(\Gamma)$ as we by the Cauchy--Schwarz inequality have the bound
\begin{align}
\| \chi \|_{W^1_1(\Gamma)} \leq \sqrt{\left|\Gamma\right|}\left(\| \chi \|_{L^2(\Gamma)} + \| \chi\otimes\nablas \|_{L^2(\Gamma)} \right) \leq \sqrt{2 \left|\Gamma\right|} \| \chi \|_{H^1(\Gamma)}
\end{align}
\end{rem}

\subsection{Interpolation} \label{section:inpln}

We now turn to defining the interpolation operator $\pi_{h,1}:[L^2(\Gamma_{h,1})]^d \mapsto [V_{h,k_u,1}]^d$ on the facet surface triangulation
as a Scott--Zhang interpolation operator, see the classical reference \cite{ScZh90} and the extension to triangulated surfaces in \cite{CaDe15}.
The construction of this interpolation operator is as follows.
Let each Lagrange node $x_i$ be associated with a domain $S_i$ which is a triangle $S_i = K\in\mcK_{h,1}$ if $x_i$ is interior to $K$ or a face $S_i = E$ if $x_i$ is interior to $E$. For nodes contained in several faces, i.e., nodes at triangle vertices, $S_i=E$ may be arbitrarily chosen among the faces containing $x_i$.
Let $\{\varphi_i : \mcK_{h,1}\rightarrow\mathbb{R}^d\}$ be the Lagrange basis for $[V_{h,k_u,1}]^d$  and let $\{\psi_i\}$ be the dual basis such that $(\varphi_j,\psi_k)_{S_i}=\delta_{jk}$ where $x_j,x_k$ are nodes associated with $S_i$. The nodal values are then  defined by
\begin{align}
\pi_{h,1} v (x_i) = (v,\psi_i)_{S_i}
\end{align}
and we readily see that $\pi_{h,1}$ is a projection by expanding any $v\in [V_{h,k_u,1}]^d$ in the Lagrange basis.
For $K_1 \in \mcK_{h,1}$ the following interpolation estimate then holds
\begin{align}
\| v - \pi_{h,1}v \|_{H^m(K_1)} \lesssim h^{s-m}\| v \|_{H^s(\mcN_h^l(K_1))},
\quad m\leq s \leq k_u+1, \quad s \geq 1
\end{align}
where $\mcN_h^l(K_1)$ is the patch of elements in $\mcK_{h,1}$ which are node neighbors to $K_1$ lifted onto the exact surface $\Gamma$, see \cite[Theorem~3.2]{CaDe15} for proof.

Next we define the interpolant $\pi_{h,k_g}:[L^2(\Gamma_h)]^d \rightarrow [V_{h,k_u,k_g}]^d$ as follows
\begin{equation} \label{eq:def-phkg}
\pi_{h,k_g} v^e|_{K_{k_g}} = (\pi_{h,1} v^e) \circ G_{K,k_g,1} 
\end{equation}
where $G_{K,k_g,1} = F_{K,1} \circ F_{K,k_g}^{-1}: K_{k_g} \rightarrow K_1$ is a bijection from the curved triangle $K_{k_g}$ 
to the corresponding flat triangle $K_1$.
The interpolant $\pi_{h,k_g}$ inherits the projection property from $\pi_{h,1}$.
As the the higher-order mesh $\mcK_{h,k_g}$ is constructed as the Lagrange interpolant of the closest point mapping $p$ on the facet mesh $\mcK_{h,1}$ we directly get uniform $L^\infty(\mcK_{h,1})$ bounds on $G_{K,k_g,1}^{-1}$ and its derivatives from standard interpolation theory and $W_\infty^{k_u+1}(U_\delta)$ bounds on $p$. This yields the inequality
\begin{align}
\| v^e - \pi_{h,k_g}v^e \|_{H^m(K_{k_g})} 
&\lesssim 
\| v^e - \pi_{h,1} v^e \|_{H^m(K_{1})}  , \quad 0 \leq m \leq k_u + 1
%\\&\lesssim 
%h^{s-m} \| v \|_{H^s(N^l(K_1))}
\end{align}
and we thus conclude that the estimate
\begin{equation}\label{eq:interpol}
\| v^e - \pi_{h,k_g}v^e \|_{H^m(K_{k_g})} 
\lesssim h^{s-m} \| v \|_{H^s(\mcN_h^l(K_1))},
\quad 0 \leq m\leq s \leq k_u+1, \quad s \geq 1
\end{equation}
holds for all $K_{k_g}\in \mcK_{h,k_g}$.
%We also have the $L^2$ stability estimate\todo{This is never needed.}
%\begin{align}\label{eq:interpolstab}
%\| \pi_{h,k_g} v^e \|_{L^2(K_{k_g})}
%\lesssim
%\| v \|_{L^2(N^l(K_1))} + h \| v \|_{H^1(N^l(K_1))}
%\end{align}

When appropriate we simplify the notation and write $\pi_h = \pi_{h,k_g}$.

\begin{rem}[Choice of interpolant]
The choice of interpolant here is rather arbitrary albeit the Scott-Zhang interpolant is a suitable choice as we are interpolating $L^2$ functions. If we were to assume continuity of all functions to be interpolated it is possible to use the Lagrange interpolant instead.
As the present work is on closed surfaces, there is no specific need for the special construction in the Scott-Zhang interpolant for satisfying essential boundary conditions.
\end{rem}

\begin{lem}[Super-approximation and super-stability]
For discrete functions $v \in V_h$ and $\chi \in [W^{k_u+1}_\infty(\Gamma)]^3$ is holds
\begin{align}
\label{eq:superapproximationH1H1}
\left\| \nablash (I - \pi_{h,k_g}) (\chi^e \cdot v) \right\|_{\mcK_h}
&\lesssim
h \| \chi \|_{W^{k_u+1}_\infty(\Gamma)} \| v \otimes \nablas \|_{\Gamma_h}
\\
\label{eq:superapproximationH1L2}
\left\| \nablash (I - \pi_{h,k_g}) (\chi^e \cdot v) \right\|_{\mcK_h}
&\lesssim
\| \chi \|_{W^{k_u+1}_\infty(\Gamma)} \| v \|_{\Gamma_h}
\end{align}
Furthermore we also have the $L^2$ stability estimate
\begin{align}
\label{eq:super-stab}
\left\| \pi_{h,k_g} (\chi^e \cdot v) \right\|_\Gammah
\lesssim
\left\| \chi^e \cdot v \right\|_\Gammah
\end{align}
\end{lem}
\begin{rem}
We call \eqref{eq:super-stab} `super-stability' as the standard $L^2$ stability of the Scott--Zhang interpolant also includes a $H^1$ term on the right hand side.
\end{rem}
\begin{proof}
Let $I_{h,k_g}:C(\Gamma_h)\rightarrow V_{h,k_u,k_g} $ denote the Lagrange interpolant.
As $\pi_{h,k_g}$ is a projection on $V_{h,k_u,k_g}$ the operator $(I - \pi_{h,k_g})I_{h,k_g}$
is zero. Subtracting this zero operator and applying interpolation estimate \eqref{eq:interpol} give
\begin{align}
\| (I - \pi_{h,k_g}) (\chi^e \cdot v) \|_{H^1(K_{k_g})}
&=
\| (I - \pi_{h,k_g})(I - I_{h,k_g}) (\chi^e \cdot v) \|_{H^1(K_{k_g})}
\\
&\lesssim
\| (I - I_{h,k_g}) (\chi^e \cdot v) \|_{H^1(\mcN_h^l(K_1))}
\\
&\lesssim
\sum_{K'\in \mcN_h(K_1)}
\| (I - I_{h,k_g}) (\chi^e \cdot v) \|_{H^1((K')^l)}
\\
&\lesssim
\sum_{K'\in \mcN_h(K_1)}
\| (I - I_{h,k_g}) (\chi^e \cdot v) \|_{H^1(K')}
\\
&\lesssim
\sum_{K'\in \mcN_h(K_1)}
h^{k_u}
\| \chi^e \cdot v \|_{H^{k_u + 1}(K')}
\\&\lesssim
\sum_{K'\in \mcN_h(K_1)}
h^{k_u}
\| \chi^e \|_{[W^{k_u + 1}_\infty(K')]^3}
\| v \|_{H^{k_u + 1}(K')}
\end{align}
where we in the last inequality use the assumption $\chi \in [W^{k_u+1}_\infty(\Gamma)]^3$.
The proof is finalized by the following estimates
\begin{equation}
h^{k_u}
\| v \|_{H^{k_u + 1}(K')}
=
h^{k_u}
\| v \|_{H^{k_u}(K')}
\lesssim
h \| v \|_{H^1(K')}
\lesssim
\| v \|_{K'}
\end{equation}
where we in the equality use that the $(k_u+1)$:th derivative of a polynomial of order $k_u$ is zero and we in the inequalities use two inverse estimates yielding \eqref{eq:superapproximationH1H1} and \eqref{eq:superapproximationH1L2}, respectively.

The stability estimate \eqref{eq:super-stab} follows by mapping each element in $\mcK_h$ associated to a patch of nodal neighbors $\mcN_h(K_1)$ onto a flat reference element $\widehat{K}$, noting that the estimate
\begin{align}
\left\| \pi_{h,1} ( \chi^e\circ F_{K_1,k_g}^{-1}  \cdot v \circ F_{K_1,k_g}^{-1} ) \right\|_{\widehat{K}}
\lesssim
\sum_{K' \in \mcN_h(K_1)}
\left\| \chi^e\circ F_{K',k_g}^{-1}  \cdot v \circ F_{K',k_g}^{-1} \right\|_{\widehat{K}}
\end{align}
holds due to the finite dimensionality of $V_h|_{K_{k_g}}$ and the construction of the interpolant,
and finally mapping back onto the parametrically mapped triangles in $\mcK_h$.
\end{proof}

\section{Error Estimates} \label{section:error-estimates}

In this section we prove a series of theoretical results leading up to the main a priori error estimates. First, in Section~\ref{section:err-norms} we define the energy norm, and in Section~\ref{section:err-cont-coer} we prove coercivity and continuity for the method. In Section~\ref{section:err-interpolation} we show interpolation estimates in the energy norm and in a corresponding continuous norm. Bounds on errors stemming from the geometry approximation via approximate surface differential operators and the change of measure are proven in Section~\ref{section:err-geom}. A Poincaré inequality on the discrete surface and certain $H^1$ type bounds are shown in Section~\ref{section:err-basic-lem}.
Last, in Section~\ref{section:err-est}, we prove the main a priori error estimates; in energy norm (Theorem~\ref{thm:errorest-energy}) and in $L^2$ norm (Theorem~\ref{thm:error-L2}).

\subsection{Norms} \label{section:err-norms}
For a continuous semidefinite form $\alpha$ on 
a Hilbert space $H$ we let $\| v \|^2_\alpha = \alpha(v,v)$ be the 
seminorm associated with $\alpha$ on $H$. We also use the standard 
notation 
\begin{gather} \label{eq:energy-norm}
\tn v \tn_h^2 = \| v \|^2_{A_h} = \| v \|^2_{a_h} + \| v \|^2_{s_h}
=
\| \Dsh v_\thh \|^2_{\mcK_h} + \beta h^{-2} \| v_{\widetilde{n}_h} \|^2_{\Gammah}
\end{gather}
for the discrete energy norm on $H^1(\mcK_h)$.

\begin{rem}[Energy Norm]
That (\ref{eq:energy-norm}) is an actual norm on $H^1(\mcK_h)$ will become evident by the analysis below,
in particular by the Poincaré type inequality in Lemma~\ref{lem:poincare-discrete}.

\end{rem}

\subsection{Coercivity and Continuity} \label{section:err-cont-coer}

\begin{lem} It holds
\begin{alignat}{2}
\label{eq:continuity-Ah}
\tn v \tn_h^2 &\lesssim A_h(v,v) \qquad &&v \in \Honet + V_h
\shortintertext{and}
\label{eq:coercivity-Ah}
| A_h(v,w) | &\lesssim \tn v \tn_h \tn w \tn_h \qquad &&v,w \in \Honet + V_h
\end{alignat}
\end{lem}
\begin{proof} The first inequality holds by definition since $\tn v \tn_h = \| v \|_{A_h}$. The second inequality directly follows by the Cauchy--Schwarz inequality since $A_h(\cdot,\cdot)$ 
is an inner product.
\end{proof}

\subsection{Interpolation} \label{section:err-interpolation}

\begin{lem}[Interpolation in Energy Norm]
For $v\in H_{\mathrm{tan}}^{k_u+1}(\Gamma)$
we have the following interpolation error estimate in the energy norm
\begin{align}
\label{eq:interpol-energy}
\tn v - \pi_h v \tn_h &\lesssim h^{k_u} \| v \|_{H_{\mathrm{tan}}^{k_u+1}(\Gamma)}
\end{align}
and for $\phi\in H_{\mathrm{tan}}^{2}(\Gamma)$ we have the following interpolation estimate in the corresponding continuous norm
\begin{align}
\label{eq:interpol-a}
\| \phi - \pi_h \phi \|_a &\lesssim h \| \phi \|_{H_{\mathrm{tan}}^{2}(\Gamma)}
\end{align}
\end{lem}
\begin{proof} {\bf Estimate (\ref{eq:interpol-energy}).}
This estimate is obtained by the calculation
\begin{align}
\tn v - \pi_h v \tn_h &\lesssim  \|\Dsh (P_h(v- \pi_h v) ) \|_{\mcK_h}
+ h^{-1}\|\widetilde{n}_h\cdot (v - \pi_h v )\|_\Gammah
\\
&\lesssim  \|\Dsh (v- \pi_h v) \|_{\Gammah}
+ \| \nh\cdot(v-\pi_h v) \|_\Gammah
+ h^{-1}\|\widetilde{n}_h\cdot (v - \pi_h v )\|_\Gammah 
\\
&\lesssim\|(v- \pi_h v )\otimes \nablash \|_{\Gammah}
+ \underbrace{(1 + h^{-1})}_{\lesssim h^{-1}}\| v - \pi_h v \|_\Gammah 
\\
&\lesssim  h^{k_u}  \| v \|_{H^{k_u+1}(\Gamma)}
\\
&\lesssim  h^{k_u}  \| v \|_{H_{\mathrm{tan}}^{k_u+1}(\Gamma)}
\end{align}
where we used the interpolation error estimate (\ref{eq:interpol}) 
and at last Lemma \ref{lem:H-Ht-equivalence} to pass to the Sobolev 
norm based on covariant derivatives.

\paragraph{Estimate (\ref{eq:interpol-a}).}
Analogously to the previous calculation we have
\begin{align}
\| \phi - \pi_h \phi \|_a
&\lesssim
\|\Ds (P(\phi- \pi_h \phi)) \|_{\Gamma}
\\&\lesssim 
\|\Ds (\phi- \pi_h \phi) \|_{\Gamma}
+ \| n \cdot (\phi - \pi_h \phi) \|_\Gamma
\\&\lesssim
\|(\phi- \pi_h \phi)\otimes\nablas \|_{\Gamma}
+ \| \phi - \pi_h \phi \|_\Gamma
\\&\lesssim  (h + h^{2})  \| v \|_{H^{2}(\Gamma)}
\\
&\lesssim  h \| v \|_{H_{\mathrm{tan}}^{2}(\Gamma)}
\end{align}
where we used the interpolation error estimate (\ref{eq:interpol}) 
and at last Lemma \ref{lem:H-Ht-equivalence} to pass to the Sobolev 
norm based on covariant derivatives.
\end{proof}

\subsection{Estimates of Geometric Errors} \label{section:err-geom}

Define the geometry error forms
\begin{equation}\label{eq:quad-forms}
Q_a(v,w) = a(v,w) - a_h(v,w),\qquad Q_l(v)= l(v) - l_h(v)
\end{equation}
Before proceeding with the estimates we formulate a useful lemma
\begin{lem}[Operator Difference]\label{lem:Ds-Dsh}
For $h \in (0,h_0]$, with $h_0$ small enough, and $v \in H^1(\mcK_h)$ 
there is a constant such that 
\begin{equation}\label{eq:Ds-bound}
\| (\DPs v^l_t)^e  - \DPsh v_\thh \|_{\mcK_h} 
\lesssim 
h^{k_g} \left( \| v  \|_{H^1(\Gammah)} + h^{-1} \|v_\nh\|_\Gammah \right)
\end{equation}
\end{lem}
\begin{proof} Decomposing $v$ and $v^l$ into tangent and normal components 
on $\Gammah$ and $\Gamma$, 
\begin{equation}\label{eq:normal-tangent-split}
v = \Psh v + \Qsh v = v_\thh + v_\nh \nh, 
\qquad
v^l = \Ps v^l + \Qs v^l = v^l_t + v^l_n n
\end{equation}
we obtain the identities 
\begin{align}\label{eq:normal-tangent-split-derivative}
\DPsh v_\thh &=  \Psh (v \otimes \nablash) - v_{n_h}  \kappa_h, \qquad
\DPs v^l_t =  \Ps ( v^l \otimes \nablas ) - v^l_{n}  \kappa
\end{align}
and thus
\begin{align}
(\DPs v^l_t)^e  - \DPsh v_\thh
&=
\underbrace{((\Ps (v^l \otimes \nablas)^e)  - \Psh (v \otimes \nablash)}_{I} 
- 
\underbrace{( (v^l_n \kappa)^e -  (v_\nh \kappa_h) )}_{II}
\end{align}
\paragraph{Term $\bfI$.} Using (\ref{eq:lift-der-vector}) and 
adding and subtracting suitable terms we obtain
\begin{align}
(\DPs v^l)^e  - \DPsh v
&=
(\Ps v^l \otimes \nablas)^e  - \Psh v \otimes \nablash
\\
&=
\Ps v \otimes \nablash B^{-1}  - \Psh v\otimes \nablash
\\
&=
(\Ps - \Psh) v \otimes \nablash B^{-1}  
\\ \nonumber
&\qquad +
\Psh v \otimes \nablash (B^{-1} -\Ps) 
\\ \nonumber
&\qquad +
\Psh v\otimes \nablash (\Ps - \Psh)
\end{align}
By the triangle inequality we then have the estimate
\begin{align}
\label{eq:DGammaDiff-first}
\| (\DPs v^l)^e  - \DPsh v \|_\Gammah
&\lesssim 
\|\Ps - \Psh\|_{L^\infty(\Gammah)} \|v \otimes \nablash\|_\Gammah 
\|B^{-1}\|_{L^\infty(\Gammah)}  
\\ \nonumber
&\qquad +
\|\Psh\|_{L^{\infty}(\Gammah)} 
\|v \otimes \nablash\|_\Gammah  
\|B^{-1} -\Ps\|_{L^\infty(\Gammah)} 
\\ \nonumber
&\qquad +
\|\Psh\|_{L^\infty(\Gammah)} 
\| v\otimes \nablash \|_{\Gammah} 
   \|\Ps - \Psh\|_{L^\infty(\Gammah)}
\\
&\lesssim 
h^{k_g} \| v \otimes \nablash \|_\Gammah 
\label{eq:DGammaDiff-last}
\end{align}
where we used the bounds (\ref{eq:B-PPh}), 
(\ref{eq:B-detbound}), and $\|\Ps - \Psh \|_{L^\infty(\Gammah)} 
\lesssim h^{k_g}$. 

\paragraph{Term $\bfI\bfI$.} Adding and subtracting suitable terms we obtain
\begin{align}
\| v_n \kappa - v_\nh \kappa_h \|_\Gammah 
&\leq
\| (v_n - v_\nh) \kappa \|_\Gammah +  \| v_\nh (\kappa - \kappa_h) \|_\Gammah
\\
&\lesssim 
h^{k_g} \| v \|_\Gammah +  h^{k_g-1} \| v_\nh  \|_\Gammah
\\
&= 
h^{k_g} ( \| v \|_\Gammah + h^{-1} \| v_\nh \|_\Gammah )
\end{align}

\paragraph{Conclusion.}
Collecting the estimates we obtain
\begin{align}
\|(\DPs v^l_t)^e  - \DPsh v_\thh \|_{\mcK_h}
&\lesssim h^{k_g} ( \| v \otimes \nablash \|_\Gammah +  \| v \|_\Gammah + h^{-1} \| v_\nh \|_\Gammah )
\\
&= 
h^{k_g} (  \| v \|_{H^1(\Gammah)} + h^{-1} \| v_\nh \|_\Gammah )
\end{align}
which is the desired bound.
\end{proof}

\begin{lem}[Geometric Errors]\label{lem:quad} For $v,w \in H^1(\Gammah)$, $h \in (0,h_0]$ with $h_0$ 
small enough, there are constants such that
\begin{align}\label{eq:quad-a}
Q_a(v,w) &\lesssim h^{k_g} (\| v \|_{H^1(\Gammah)} + h^{-1} \| v_\nh \|_\Gammah )  
(\| w \|_{H^1(\Gammah)} + h^{-1} \| w_\nh \|_\Gammah )
\\ \label{eq:quad-l}
Q_l(w) & \lesssim (h^{k_p} + h^{k_g + 1}) \|f \|_\Gamma \| w \|_{\Gammah}
\end{align}
and also, for higher regularity tangential functions $\psi,\phi \in H^2_\mathrm{tan}(\Gamma)$ it holds
\begin{align}\label{eq:quad-a-tan}
Q_a(\psi,\phi) &\lesssim h^{k_g+1} \|\psi\|_{H^2_\mathrm{tan}(\Gamma)}\|\phi\|_{H^2_\mathrm{tan}(\Gamma)}
\\ \label{eq:quad-l-tan}
Q_l(\phi) & \lesssim (h^{2 k_p} + h^{k_g+1}) \|f \|_\Gamma \| \phi \|_{\Gamma}
\end{align}
\end{lem}
\begin{proof} {\bf Estimate (\ref{eq:quad-a}).}  
%
%
%\begin{equation}
%\DPs v_t 
%= \Ps v_t \otimes \nablas 
%= \Ps v \otimes \nablas - \Ps( v_n n )\otimes \nablas
%=  \Ps v \otimes \nablas - v_n \kappa
%\end{equation}
%and thus 
%\begin{align}
%Q_a(v,w) &= (\DPs v^l - v^l_{n}  \kappa, \DPs w^l - w^l_{n}  \kappa)_\Gamma 
%- (\DPsh v -v_{n_h}  \kappa_h, \DPsh w - w_{n_h}  \kappa_h )_\Gammah 
%\end{align}
Changing domain of integration 
from $\Gamma$ to $\Gammah$ in the first term and adding and subtracting 
suitable terms we obtain
\begin{align}
Q_a(v,w)
&=
(\DPs v^l_t , \DPs w^l_t )_\Gamma 
- (\DPsh v_{t_h} , \DPsh w_{t_h} )_{\mcK_h}
\\
&=
((\DPs v^l_{t})^e ,( \DPs w^l_{t} )^e \muh)_{\mcK_h} 
- (\DPsh v_\thh , \DPsh w_\thh )_{\mcK_h}
\\
&=
(\DPs v^l_t , \DPs w^l_t  (\muh-1))_{\mcK_h}
\\ \nonumber
& \qquad + 
(\DPs v^l_t , \DPs w^l_t )_{\mcK_h}
- (\DPsh v_\thh, \DPsh w_\thh )_{\mcK_h}
\end{align}
Here the first term on the right hand is directly estimated using (\ref{eq:B-detbound}), 
\begin{align}
(\DPs v^l_t , \DPs w^l_t  (\muh-1))_{\mcK_h} 
&\leq
\| \DPs v^l_t \|_{{\mcK_h}} \|\DPs w^l_t \|_{\mcK_h}  
\| 1 - \muh \|_{L^\infty(\Gammah)} 
\\
&\lesssim 
h^{k_g+1} \| v  \|_{H^1(\Gammah)} 
\|w \|_{H^1(\Gammah)}
\end{align}
where we used (\ref{eq:normal-tangent-split-derivative}) to conclude that  
\begin{align}
\| \DPs v^l_t \|_{{\mcK_h}}  &= \| \Ps ( v^l \otimes \nablas ) - v^l_n \kappa \|_{\Gammah} 
\\
&\lesssim \| v^l \otimes \nablas \|_\Gammah  + \| v_n  \|_{\Gammah} 
\\
&\lesssim  \| v \otimes \nablash \|_\Gammah + \| v \|_\Gammah 
\\
&\lesssim \| v \|_{H^1(\Gammah)} 
\end{align}

For the second term we add and subtract suitable terms and employ 
(\ref{eq:Ds-bound}), 
\begin{align}\nonumber
&(\DPs v^l_t, \DPs w^l_t )_{\mcK_h} 
- (\DPsh v_\thh , \DPsh w_\thh )_{\mcK_h}
\\\label{eq:hireg-same}
&\qquad 
=
(\DPs v^l_t - \DPsh v_\thh, \DPs w^l_t )_{\mcK_h}
+ (\DPsh v_\thh , \DPs w^l_t - \DPsh w_\thh )_{\mcK_h}
\\
&\qquad 
\leq 
\| \DPs v^l_t - \DPsh v_\thh \|_{\mcK_h}  \|\DPs w^l_t \|_{\mcK_h}
+ \|\DPsh v_\thh \|_{\mcK_h}  \|\DPs w^l_t - \DPsh w_\thh \|_{\mcK_h}
\\
&\qquad 
\leq 
h^{k_g} (\| v \|_{H^1(\Gammah)} + h^{-1} \| v_\nh \|_\Gammah )  
 \|\DPs w^l_t \|_{\mcK_h}
 \\ \nonumber
 &\qquad \qquad
+ h^{k_g} \|\DPsh v_\thh \|_{\mcK_h}  
( \| w \|_{H^1(\Gammah)} + h^{-1} \| w_\nh \|_\Gammah )
\\
&\qquad 
\leq 
h^{k_g} (\| v \|_{H^1(\Gammah)} + h^{-1} \| v_\nh \|_\Gammah )   
( \| w \|_{H^1(\Gammah)} + h^{-1} \| w_\nh \|_\Gammah )
\end{align}
Combining the estimates we arrive at 
\begin{align}
Q_a(v,w) &\lesssim h^{k_g} (\| v \|_{H^1(\Gammah)} + h^{-1} \| v_\nh \|_\Gammah )   
( \| w \|_{H^1(\Gammah)} + h^{-1} \| w_\nh \|_\Gammah )
\end{align}

\paragraph{Estimate (\ref{eq:quad-l}).} Changing domain of integration from $\Gamma$ to $\Gammah$ and adding and subtracting suitable terms we obtain
\begin{align}
Q_l(v) &= (f,v^l_t)_\Gamma - (f^e,v_\thh)_\Gammah
\\
&= (|B|f^e,P v)_\Gammah - (f^e,v_\thh)_\Gammah
\\
&= (|B|f^e,(P - \Psh)v )_\Gammah + ((|B|-1) f^e,v_\thh)_\Gammah
\\
&\lesssim
\| \Ps - \Psh \|_{L^\infty(\Gammah)} \| f^e \|_{\Gammah} \| v \|_{\Gammah}
+
\left\| |B|-1 \right\|_{L^\infty(\Gammah)} \| f^e \|_{\Gammah} \| v_\thh \|_{\Gammah}
\\
&\lesssim
(h^{k_p} + h^{k_g+1}) \| f \|_{\Gamma} \| v \|_{\Gammah}
\end{align}
where we used (\ref{eq:B-detbound}) followed by the norm 
equivalence (\ref{eq:normequ-vec}).

%\begin{proofread}{Perhaps let the closest point extension be implied, i.e., $\phi^e=\phi$.}
\paragraph{Estimate (\ref{eq:quad-a-tan}).}
For $\psi,\phi\in H^2_\mathrm{tan}(\Gamma)$ we
%by Lemma~\eqref{lem:H-Ht-equivalence} and the equivalence of norms \eqref{eq:normequ}
have $\psi^e,\phi^e \in H^1(\Gammah)$ whereby estimate \eqref{eq:Ds-bound} holds. Combined with the bound $\| \phi \|_{H^1(\Gammah)} + h^{-1} \|\phi_{n_h}\|_{\Gammah} \lesssim \| \phi \|_{H^1_\mathrm{tan}(\Gamma)}$, which we prove in Lemma~\ref{lem:quad-norm-bounds} below, we have the basic estimate
\begin{align}\label{eq:DDiff-phi}
\| (\DPs \phi)^e  - \DPsh \phi^e_\thh \|_{\mcK_h} 
\lesssim 
h^{k_g} \| \phi \|_{H^1_\mathrm{tan}(\Gamma)}
%\lesssim  h^{k_g} \| \phi \|_{H^2_\mathrm{tan}(\Gamma)}
\end{align}

Without loosing the desired approximation order of $h^{k_g+1}$, we may follow the proof of estimate \eqref{eq:quad-a}, combined with the bound $\|\phi^e\|_{H^1(\Gammah)} \lesssim \|\phi\|_{H_\mathrm{tan}^1(\Gamma)}$, until \eqref{eq:hireg-same} where it remains to bound the term
\begin{align}\label{eq:geom-approx-remaining-term}
(\DPs^e \psi_t - \DPsh \psi^e_\thh, \DPs^e \phi_t )_{\mcK_h}
+ (\DPsh \psi^e_\thh , \DPs^e \phi_t - \DPsh \phi^e_\thh )_{\mcK_h}
\end{align}
For the second integral we by adding and subtracting suitable terms, applying the Cauchy--Schwarz inequality and estimate \eqref{eq:DDiff-phi} have
\begin{align}
%\left|
(\DPsh \psi^e_\thh , \DPs^e \phi_t - \DPsh \phi^e_\thh )_{\mcK_h}
%\right|
&\lesssim
h^{2k_g}
\| \psi \|_{H^1_\mathrm{tan}(\Gamma)}
\| \phi \|_{H^1_\mathrm{tan}(\Gamma)}
\\ \nonumber
&\qquad + (\DPs^e \psi_t , \DPs^e \phi_t - \DPsh \phi^e_\thh )_{\mcK_h}
\end{align}
where we note that the remaining integral is transpose symmetric to the first term in \eqref{eq:geom-approx-remaining-term} and thus the following analysis will hold for both these terms.
As $\psi,\phi$ are tangential we have the simplification
\begin{align}
( \Ds^e \psi_t , \Ds^e \phi_t - \Dsh \phi^e_{t_h} )_{\mcK_h}
&=
( \Ds^e \psi , \Ds^e \phi - \Dsh \phi^e_{t_h} )_{\mcK_h}
\\&=
\underbrace{
( \Ds^e \psi , \Ds^e \phi - \Dsh \phi^e )_\Gammah
}_{I}
+
\underbrace{
( \Ds^e \psi , \kappa_h \phi^e_{n_h} )_\Gammah
}_{II}
\end{align}
where the identity \eqref{eq:DSh-vt-id} is used to rewrite $\Dsh \phi^e_{t_h}$ in the second equality.

\paragraph{Term $\boldsymbol I$.}
We begin by expressing $\Dsh \phi^e$ in terms of $\Ds \phi$.
By the closest point extension we have $\phi^e \otimes \nabla = \phi \otimes \nablas$ which yields
the identity
\begin{align}
\Dsh \phi^e
&=
P_h (\phi^e \otimes \nabla) P_h
\\&=
P_h (\phi \otimes \nablas)^e P_h
\\&=
P_h (\underbrace{P (\phi \otimes \nablas)}_{=\Ds\phi})^e P_h
+
P_h ((n\otimes n) (\phi \otimes \nablas))^e P_h
%\\&=
%\Ds^l \phi - (n_h\otimes n_h) \Ds^l\phi - \Ds^l\phi (n_h\otimes n_h)  + (n_h\otimes n_h) \Ds^l\phi (n_h\otimes n_h)
%\\&\qquad\nonumber
%+ P_h ((n\otimes n) (\phi \otimes \nablas))^l - P_h ((n\otimes n) (\phi \otimes \nablas))^l (n_h\otimes n_h)
\end{align}
and this allows us to decompose term $I$ into the following two terms
\begin{align}
I
%&= ( \Ds^e \psi , \Ds^e \phi - \Dsh \phi^e )_\Gammah
%\\
&=
\underbrace{
( \Ds^e \psi , \Ds^e \phi - P_h \Ds^e\phi P_h  )_\Gammah
}_{I_1}
-
\underbrace{
( \Ds^e \psi , P_h ((n\otimes n) (\phi \otimes \nablas))^e P_h )_\Gammah
}_{I_2}
\end{align}
First we consider $I_1$.
Expanding the projections $P_h = I - n_h\otimes n_h$ and recalling that $\Ds\psi,\Ds\phi$ are tangential we by the bound on $P\cdot n_h$ readily get
\begin{align}
|I_1|
&=
|( \Ds^e \psi , \Ds^e \phi - P_h \Ds^e \phi P_h  )_\Gammah|
\\
&=
|( \Ds^e \psi , (n_h\otimes n_h) \Ds^e\phi + \Ds^e\phi (n_h\otimes n_h) - (n_h\otimes n_h) \Ds^e\phi (n_h\otimes n_h)  )_\Gammah|
\\
&\lesssim
(\underbrace{\| P\cdot n_h \|_{L^\infty(\Gammah)}}_{\lesssim h^{k_g}})^2 \| \Ds^e \psi \|_{L^2(\Gammah)} \| \Ds^e \phi \|_{L^2(\Gammah)}
\\
&\lesssim
h^{2k_g}
\| \psi \|_{H^1_{\mathrm{tan}}(\Gamma)}
\| \phi \|_{H^1_{\mathrm{tan}}(\Gamma)}
\end{align}
Next we consider $I_2$. Expanding the rightmost projection $P_h = I - n_h\otimes n_h$ we get two terms where it is sufficient to handle the second term using the bound on $P\cdot n_h$ while for the first term it is necessary to employ the non-standard $P_h\cdot n$ geometry approximation of Lemma~\ref{lem:Phn}.
The calculations follow
\begin{align}
| I_2 |
&\leq
| ( \Ds^e \psi , P_h ((n\otimes n) (\phi \otimes \nablas))^e )_\Gammah |
\\&\qquad\nonumber
+
| ( \Ds^e \psi , P_h ((n\otimes n) (\phi \otimes \nablas))^e n_h\otimes n_h )_\Gammah |
\\
&\lesssim
| ( P_h\cdot n  , \underbrace{(\Ds \psi \cdot (n \cdot (\phi \otimes \nablas)))^e}_{\in H^1(\Gamma) \subset W^1_1(\Gamma)} )_\Gammah |
+
(\underbrace{\| P\cdot n_h \|_{L^\infty(\Gammah)}}_{\lesssim h^{k_g}})^2
\| \Ds \psi \|_{\Gamma} \| \phi\otimes\nablas \|_\Gamma
\\
&\lesssim
h^{k_g+1} \| \Ds \psi \cdot (n \cdot (\phi \otimes \nablas)) \|_{W^1_1(\Gamma)}
+
h^{2 k_g} \| \psi \|_{H^1_{\mathrm{tan}}(\Gamma)}
\| \phi \|_{H^1(\Gamma)}
\\
&\lesssim
h^{k_g+1}
\| \psi \|_{H^2(\Gamma)}
\| \phi \|_{H^2(\Gamma)}
+
h^{2 k_g} \| \psi \|_{H^1_{\mathrm{tan}}(\Gamma)}
\| \phi \|_{H^1(\Gamma)}
\end{align}
In the last inequality use the Cauchy--Schwarz inequality on the $W^1_1(\Gamma)$ norm
to achieve a bound in $H^2(\Gamma)$ norms and
we can then move onto covariant derivatives via Lemma~\ref{lem:H-Ht-equivalence}.

\paragraph{Term $\boldsymbol I \boldsymbol I$.}
If $k_g=1$ this term vanishes as $\kappa_h=0$.
If $k_g \geq 2$ we by adding and subtracting the exact curvature tensor have
\begin{align}
%( \Ds \psi , \kappa_h \phi_{n_h} )_\Gammah
II
&=
( \Ds \psi , (\kappa_h-\kappa) (\phi \cdot n_h) )_\Gammah
+
( \Ds \psi , \kappa (\phi \cdot n_h) )_\Gammah
\\
&=
\underbrace{
( \Ds \psi , (\kappa_h-\kappa) (\phi \cdot (n_h - n)) )_\Gammah
}_{II_1}
+
\underbrace{
( \Ds \psi , \kappa (\phi \cdot (P\cdot n_h)) )_\Gammah
}_{II_2}
\end{align}
where we in the last equality utilize that $\phi$ is tangential to subtract $n$.
By standard geometry approximation bounds we for term $II_1$ directly get the estimate
\begin{align}
\left|
II_1
%( \Ds \psi , (\kappa_h-\kappa) (\phi \cdot (n_h - n)) )_\Gammah
\right|
&\lesssim
\underbrace{\| \kappa - \kappa_h \|_{L^\infty(\Gammah)}}_{\lesssim h^{k_g-1}}
\underbrace{\| n - n_h \|_{L^\infty(\Gammah)}}_{\lesssim h^{k_g}}
\| \Ds \psi \|_{L^2(\Gammah)}
\| \phi \|_{L^2(\Gammah)}
\\&\lesssim
h^{2 k_g - 1}
\| \psi \|_{H^1_{\mathrm{tan}}(\Gamma)}
\| \phi \|_{L^2(\Gamma)}
\end{align}
which is sufficient as $k_g+1 \leq 2k_g-1$ for $k_g \geq 2$.

For term $II_2$ we again need to utilize the non-standard geometry approximation estimate in Lemma~\ref{lem:Phn} which gives
\begin{align}
| II_2 |
\lesssim
h^{k_g+1}
\| \mathrm{tr}(\kappa \Ds \psi) \phi \|_{W^1_1(\Gamma)}
\lesssim
h^{k_g+1}
\| \psi \|_{H^2(\Gamma)}
\| \phi \|_{H^1(\Gamma)}
\end{align}
where we use the Cauchy--Schwarz inequality on the $W^1_1(\Gamma)$ norm and the bound on $\kappa$ in the last inequality. We can now move onto covariant derivatives via Lemma~\ref{lem:H-Ht-equivalence}.

\paragraph{Conclusion (\ref{eq:quad-a-tan}).}
Collecting all terms and noting that the various norms on $\psi$ and $\phi$ are trivially bounded
by $\|\psi\|_{H_\mathrm{tan}^2(\Gamma)}$ and $\|\phi\|_{H_\mathrm{tan}^2(\Gamma)}$ concludes the proof of \eqref{eq:quad-a-tan}.
%\end{proofread}

\paragraph{Estimate (\ref{eq:quad-l-tan}).}
Mimicking the calculation for \eqref{eq:quad-l} and utilizing that both $f$ and $\phi$ are tangential to $\Gamma$ give
\begin{align}
Q_l(\phi) &= (f,\phi_t)_\Gamma - (f^e,\phi^e_\thh)_\Gammah
\\
&= (|B|f^e,\phi^e)_\Gammah - (f^e,v_\thh)_\Gammah
\\
&= (|B|f^e,(\underbrace{I - \Psh}_{=\nh\otimes\nh})\phi^e )_\Gammah + ((|B|-1) f^e,\phi^e_\thh)_\Gammah
\\
&= (|B| \nh\cdot f^e, \nh\cdot \phi^e )_\Gammah + ((|B|-1) f^e,\phi^e_\thh)_\Gammah
\\
&\lesssim
\| \Ps\cdot\nh \|_{L^\infty(\Gammah)}^2 \| f^e \|_{\Gammah} \| \phi^e \|_{\Gammah}
+
\left\| |B|-1 \right\|_{L^\infty(\Gammah)} \| f^e \|_{\Gammah} \| \phi^e_\thh \|_{\Gammah}
\\
&\lesssim
(h^{2 k_p} + h^{k_g+1}) \| f \|_{\Gamma} \| \phi \|_{\Gamma}
\end{align}
where we finally use the norm 
equivalence (\ref{eq:normequ-vec}).
\end{proof}

\subsection{Basic Lemmas} \label{section:err-basic-lem}

\begin{lem}[Poincaré Inequality on $\boldsymbol\Gamma_{\boldsymbol h}$]\label{lem:poincare-discrete} For $k_g\geq 1$,  
there are constants such that for all $v \in H^1(\mcK_h)$ and $h\in (0,h_0],$ with $h_0$ small enough,
\begin{equation}\label{eq:poincare-discrete}
\| v  \|_\Gammah 
\lesssim \| \DPsh v_\thh \|_{\mcK_h} + \| v_\nh \|_\Gammah 
\end{equation}
\end{lem}
\begin{proof}
Using norm equivalence \eqref{eq:normequ}, splitting in tangent and normal
components and the triangle inequality we have
\begin{align} \label{eq:discpoin-proof-a}
\| v \|_\Gammah &\lesssim \| v \|_\Gamma
\lesssim \| v_t \|_\Gamma + \| v_n \|_\Gamma
\end{align}
For the normal component we have the estimate
\begin{align}
  \| v_n\|_\Gamma &\lesssim   \| (n - \nh )\cdot v \|_\Gamma    
   + \|  \nh  \cdot v \|_\Gamma
  \\ \label{eq:discpoin-proof-b}
  &\lesssim h^{k_g} \| v \|_\Gamma +   \| v_\nh  \|_\Gamma
   \\ \label{eq:discpoin-proof-c}
  &\lesssim h^{k_g} \| v \|_\Gammah +   \| v_\nh  \|_\Gammah
\end{align}
Next the tangent component can be estimated using the Poincar\'e inequality (Lemma \ref{lem:poincare-cont}) on~$\Gamma$
\begin{align}
 \| v_t \|_\Gamma  &\lesssim \| \Ds v_t \|_\Gamma
 \\&\lesssim
 \| \Ds v_t \|_{\mcK_h}
 \\&\lesssim
 \| \Dsh v_{t_h} \|_{\mcK_h} +
 \| \Ds v_t - \Dsh v_{t_h} \|_{\mcK_h}
 \\&\lesssim
 \| \Dsh v_{t_h} \|_{\mcK_h} +
h^{k_g} \| v  \|_{H^1(\Gammah)} + \underbrace{h^{k_g-1}}_{\lesssim 1} \|v_\nh\|_\Gammah
\end{align}
where we changed domain 
of integration from $\Gamma$ to $\Gammah$, added and subtracted 
$\Dsh v_{t_h}$ and used the triangle inequality,
and finally we used Lemma~\ref{lem:Ds-Dsh}.
%and finally we used the following bound
%\begin{align}
%h^{-1}\| v_{n_h} \|_\Gammah
%&\lesssim
%h^{-1}\| v\cdot(n_h - n) \|_\Gammah
%+ h^{-1}\| v\cdot(n - \widetilde{n}_h) \|_\Gammah
%+ h^{-1}\| v\cdot\widetilde{n}_h \|_\Gammah
%\\&\lesssim \label{eq:nh-nhtilde}
%\underbrace{( h^{k_g-1} + h^{k_p-1} )}_{\lesssim 1} \| v \|_\Gammah + h^{-1}\| v_{\widetilde{n}_h} \|_\Gammah
%\end{align}
%which holds by geometric bounds on $n_h$ and $\widetilde{n}_h$.
Combining the two above estimates above in \eqref{eq:discpoin-proof-a}
and using a kickback argument to hide
$h^{k_g} \| v \|_\Gammah$ for all $h \in (0,h_0]$ with $h_0$ small enough, we 
conclude that
\begin{equation}  \label{eq:discpoin-proof-i}
\| v \|_\Gammah 
\lesssim 
 \| \DPsh v_\thh  \|_{\mcK_h} + \|v_\nh \|_\Gammah 
 +  h^{k_g} \| v \otimes \nablash \|_\Gammah 
\end{equation} 

What remains is to handle the last term in \eqref{eq:discpoin-proof-i}.
For $k_g = 1$ we have the special situation that $n_h$ and $\Psh$ are piecewise 
constant which leads to the identity 
$v_\thh \otimes \nablash 
= 
(\Psh v_\thh ) \otimes \nablash 
= 
\Psh ( v_\thh \otimes \nablash )
=
 \DPsh v_\thh 
$, and we have the estimates 
\begin{align}
h^{k_g} \| v \otimes \nablash \|_\Gammah 
&\leq 
h^{k_g} \| v_\thh\otimes \nablash \|_{\mcK_h} 
+ 
h^{k_g} \| (v_\nh \nh)  \otimes \nablash \|_{\mcK_h}
\\
&=
h^{k_g} \| \DPsh v_\thh \|_{\mcK_h} 
+ 
h^{k_g} \|\nablash v_\nh \|_\Gammah
\\
&\lesssim 
h^{k_g} \| \DPsh  v_\thh  \|_{\mcK_h} 
+ 
h^{k_g-1} \| v_\nh \|_\Gammah
\\ \label{eq:discpoin-proof-j}
&\lesssim 
\| \DPsh v_\thh  \|_\Gammah 
+ 
 \| v_\nh \|_\Gammah
\end{align}
where we used the fact that $\nh$ is constant to conclude that $v_\nh$ is a polynomial on 
each element in the mesh  and thus we have the inverse bound $\|\nablash v_\nh\|_\Gammah 
\lesssim h^{-1} \| v \|_\Gammah$. The estimate (\ref{eq:discpoin-proof-j}) together 
with (\ref{eq:discpoin-proof-i}) concludes the proof of (\ref{eq:poincare-discrete}) in 
this case. For $k_g\geq 2$ we may instead use an inverse inequality,
\begin{equation}
h^{k_g} \| v \otimes \nablash \|_\Gammah 
\lesssim 
h^{k_g - 1} \| v \|_\Gammah
\end{equation} 
and conclude the proof of (\ref{eq:poincare-discrete}) by again 
using a kickback argument.
\end{proof}

\begin{lem}[Discrete ${\boldsymbol H}^{\boldsymbol 1}$ Type Bounds]\label{lem:discrete-bounds} For $k_p\geq k_g \geq 1$ and all $v\in V_h$, $h\in(0,h_0]$ with $h_0$ small enough, there are constants such that
\begin{align}
\label{eq:disc-bound-a}
\| \DPsh v_\thh \|_{\mcK_h} + 
h^{-1}\| v_\nh \|_\Gammah
&\lesssim \tn v \tn_h
\\
\label{eq:disc-bound-b}
\| v \otimes \nablash \|_{\Gammah}
&\lesssim 
\tn v \tn_h
\\
\label{eq:disc-bound-c}
\| v \|_{H^1(\Gammah)} + h^{-1} \| v_\nh \|_\Gammah 
&\lesssim \tn v \tn_h
\end{align}
\end{lem}
\begin{proof} {\bf Estimate (\ref{eq:disc-bound-a}).}
By adding and subtracting different normals, the triangle inequality, geometric bounds, and the discrete Poincaré inequality \eqref{eq:poincare-discrete} we obtain 
\begin{align}
h^{-1}\| v_{n_h} \|_\Gammah
&\lesssim
h^{-1} \| v_{\widetilde{n}_h} \|_\Gammah
+
h^{-1} \| (n - \nh)\cdot v \|_\Gammah
+
h^{-1} \| (n - \widetilde{n}_h)\cdot v \|_\Gammah
\\&\lesssim
h^{-1} \| v_{\widetilde{n}_h} \|_\Gammah
+
\underbrace{(h^{k_g-1} + h^{k_p-1})}_{\lesssim 1} \| v \|_\Gammah
\\&\lesssim
h^{-1} \| v_{\widetilde{n}_h} \|_\Gammah
+
\| \DPsh v_\thh \|_{\mcK_h} + \| v_\nh \|_\Gammah 
\end{align}
Hiding the $\| v_\nh \|_\Gammah$ term on the right using a kickback argument gives the bound
\begin{align}
h^{-1}\| v_{n_h} \|_\Gammah
&\lesssim
\| \DPsh v_\thh \|_{\mcK_h}
+
h^{-1} \| v_{\widetilde{n}_h} \|_\Gammah
\end{align}
and estimate \eqref{eq:disc-bound-a} readily follows.

\paragraph{Estimate (\ref{eq:disc-bound-b}).}
We begin with the estimate
\begin{align}
\| v \otimes \nablash \|_\Gammah
&
\leq 
\| \Psh ( v \otimes \nablash )\|_\Gammah 
+ \| \Qsh (v \otimes \nablash) \|_\Gammah
\\
&
\leq 
\tn v \tn_h + 
\underbrace{\| \Qsh (v_t \otimes \nablash) \|_\Gammah}_{I} 
+ \underbrace{\| \Qsh (( v_n n)  \otimes \nablash) \|_\Gammah}_{II}
\\ \label{eq:lemA-Proof-a}
&\lesssim 
\tn v \tn_h  +  h \|  v  \otimes \nablash  \|_\Gammah
\end{align} 
Here we used the orthogonal decomposition $v = v_t + v_n n$, the estimate 
\begin{equation}
\| \Psh ( v \otimes \nablash )\|_\Gammah  
\lesssim \| \DPsh v_\thh \|_{\mcK_h} + \| v_\nh \|_\Gammah \lesssim \tn v \tn_h
\end{equation}
which holds by \eqref{eq:disc-bound-a},
and the estimates
\begin{equation}
I \lesssim \tn v \tn_h, \qquad II \lesssim  h \|  v  \otimes \nablash  \|_\Gammah + \tn v \tn_h 
\end{equation} 
The second term on the right hand side of (\ref{eq:lemA-Proof-a})  can now be hidden in 
the left hand side using a kick back argument, for all $h\in (0,h_0]$ with $h_0$ small enough. 
We now turn to the verification of the estimates of Terms $I$ and $II$.

\paragraph{Term $\bfI$.} 
Starting from the expansion
\begin{equation}
v_t 
= \sum_{i=1}^3 v_i p_i  
\end{equation}
and computing the derivative we obtain
\begin{align}
v_t \otimes \nablash 
&=  \sum_{i=1}^3 p_i \otimes (\nablash v_i) + v_i ( p_i \otimes \nablash)  
\end{align}
Thus we conclude that 
\begin{align}
\Qsh (v_t \otimes \nablash )
&=
\sum_{i=1}^3 \Qsh (p_i \otimes (\nablash v_i)) + \Qsh (v_i ( p_i \otimes \nablash))
\\
&= 
\sum_{i=1}^3 (\Qsh p_i) \otimes (\nablash v_i) + v_i \Qsh ( p_i \otimes \nablash)
\end{align}
and by the bounds $\|\Ps\cdot\nh \|_{L^\infty(\Gammah)} 
\lesssim h^{k_g}$ and $\| \nablash p_i \|_{L^\infty(\Gamma_h)} \lesssim 1$ we have
\begin{align}
\| \Qsh v_t \otimes \nablash  \|_\Gammah
&\lesssim
\sum_{i=1}^3   h^{k_g}  \|\nablash v_i \|_\Gammah +  \| v_i \|_{\Gammah}  
=\bigstar
\end{align}
Next, using the identity $v_i = v \cdot p_i = v \cdot (P e_i)$ 
we may  add and subtract an interpolant and then use super-approximation \eqref{eq:superapproximationH1L2} and an inverse inequality as follows
\begin{align}
 \|\nablash v_i \|_\Gammah
 &\lesssim 
  \|\nablash (I- \pi_h) (v\cdot p_i) \|_{\mcK_h} +  \|\nablash \pi_h v_i \|_{\mcK_h}
  \\
   &\lesssim
  \| v \|_{\mcK_h} +  h^{-1} \|\pi_h v_i \|_{\mcK_h}
    \\
   &\leq 
  \| v \|_{\Gammah} +  h^{-1} \| v_i \|_{\Gammah}
      \\
   &\leq 
(1 +  h^{-1}  )  \| v \|_{\Gammah}  
\end{align}
where we used the $L^2$ super-stability of $\pi_h$ \eqref{eq:super-stab} and the trivial estimate 
$|v_i| \leq |v|$. 
Using the discrete Poincaré inequality  \eqref{eq:poincare-discrete} and the bound \eqref{eq:disc-bound-a},  we 
obtain
\begin{align}
\bigstar
&\lesssim
\underbrace{(1 + h^{k_g - 1} )}_{\lesssim 1} \| v \|_\Gammah 
\lesssim
\|\Dsh v_\thh \|_{\mcK_h} + \| v_\nh \|_\Gammah
\lesssim
\tn v \tn_h 
\end{align}
Thus we conclude that 
\begin{equation}
I \lesssim \tn v \tn_h 
\end{equation}

\paragraph{Term $\bfI\bfI$.}  Proceeding in the same way as above
\begin{align}
\| \Qsh (v_n n )  \otimes \nablash \|_\Gammah 
&\leq 
\|   \nablash v_n  \|_\Gammah +  \| v_n \|_\Gammah \| Q_h \kappa Q_h \|_{L^\infty(\Gammah)}  
\\
&\lesssim
\| \nablash(I-\pi_h) v_n  \|_\Gammah
+
\|  \nablash(\pi_h v_n)   \|_\Gammah
\\ \nonumber
&\qquad + h^{k_g} \| v_n \|_\Gammah \| Q_h P \kappa P Q_h \|_{L^\infty(\Gammah)} 
\\
&\lesssim 
h \|  v  \otimes \nablash  \|_\Gammah
+
h^{-1} \| \pi_h v_n  \|_\Gammah
+ h^{2k_g} \| v_n \|_\Gammah
\\
&\lesssim \label{eq:lemA-ff}
h \|  v  \otimes \nablash  \|_\Gammah
+
\underbrace{(h^{-1} + h^{2k_g} )}_{\lesssim h^{-1}} \| v_n  \|_\Gammah
\end{align}
where we used super-approximation \eqref{eq:superapproximationH1H1}, an inverse inequality, 
and the $L^2$ super-stability of $\pi_h$ \eqref{eq:super-stab}. In the second 
term we replace $n$ by $n_h$, and use (\ref{eq:poincare-discrete}), 
\begin{align}
 \| v_n  \|_\Gammah 
 &\lesssim \| v_{n_h} \|_\Gammah 
 + h^{k_g} \| v \|_\Gammah
 \\
   &\lesssim \| v_{n_h} \|_\Gammah 
+ h^{k_g} ( \| \Dsh v_\thh \|_{\mcK_h} + \| v_\nh \|_\Gammah ) 
\end{align}
Thus we have 
\begin{equation}
h^{-1} \| v_n  \|_\Gammah \lesssim 
\underbrace{(h^{-1} + h^{k_g-1})}_{\lesssim h^{-1}} \| v_{n_h} \|_\Gammah 
 + \underbrace{h^{k_g-1}}_{\lesssim 1}  \| \DPsh v_\thh \|_{\mcK_h} 
 \lesssim \tn v \tn_h
\end{equation}
where we used \eqref{eq:disc-bound-a}.
Collecting the estimates we obtain
\begin{equation}
II \lesssim h \|  v  \otimes \nablash  \|_\Gammah + \tn v \tn_h 
\end{equation} 
which finalizes the proof of estimate \eqref{eq:disc-bound-b}.

\paragraph{Estimate (\ref{eq:disc-bound-c}).}
Using the discrete Poincaré inequality 
\eqref{eq:poincare-discrete} and estimates \eqref{eq:disc-bound-b} and \eqref{eq:disc-bound-a} we obtain
\begin{align}
\| v \|_{H^1(\Gammah)} + h^{-1} \| v_{\nh} \|_\Gammah 
&\lesssim 
\| v \|_\Gammah + \| v \otimes \nablash \|_\Gammah + h^{-1} \| v_{\nh} \|_\Gammah 
\lesssim \tn v \tn_h 
\end{align}
\end{proof}

\begin{lem}[A Continuous ${\boldsymbol H}^{\boldsymbol 1}$ Type Bound] \label{lem:quad-norm-bounds} For $k_g\geq 1$ and $h \in (0,h_0]$ 
with $h_0$ small enough, there are constants such that
\begin{align}\label{eq:quad-norm-cont}
\| v \|_{H^1(\Gammah)} + h^{-1} \| v_\nh \|_\Gammah &\lesssim \| v \|_{H^1_{\mathrm{tan}}(\Gamma)}
, \qquad \forall v \in H^1_{\mathrm{tan}}(\Gamma)
\end{align}
\end{lem}
\begin{proof} We have the estimates
\begin{align}
 \| v \|_{H^1(\Gammah)} + h^{-1} \| v_\nh \|_\Gammah 
 &=  \| v \|_{H^1(\Gamma)} + h^{-1} \| (n - n_h) \cdot v \|_\Gammah 
\\ 
  &=  \| v \|_{H^1(\Gamma)} + h^{k_g -1} \| v \|_\Gammah 
 \\
 &\lesssim ( 1 + h^{k_g -1} ) \| v \|_{H^1_{\mathrm{tan}}(\Gamma)}
\\
&\lesssim  \| v \|_{H^1_{\mathrm{tan}}(\Gamma)}
\end{align}
where we used equivalence of norms \eqref{eq:normequ} and (\ref{eq:H-Ht-equivalence}) 
for the first term and the fact that $v$ is tangential to subtract the exact normal and the 
bound  (\ref{eq:geombounds}) for the error in the normal combined with the Poincaré inequality (Lemma~\ref{lem:poincare-cont}) for the second term.
\end{proof}

\subsection{Error Estimates} \label{section:err-est}

\begin{thm}[Energy Error Estimate]\label{thm:errorest-energy}
Let $u \in H^{k_u+1}_{\mathrm{tan}}(\Gamma)$ be the solution to \eqref{eq:problem} and $u_h$ the solution to \eqref{eq:method}, and assume that the geometry approximation assumptions are fulfilled and $k_p \geq k_g \geq 1$. Then the following estimate holds
\begin{align}\label{eq:errorest-energy}
\tn e \tn_h
&\lesssim 
(h^{k_u} + h^{k_g} + h^{k_p-1})\| u\|_{H^{k_u+1}_{\mathrm{tan}}(\Gamma)}  
\end{align}
for all $h\in(0,h_0]$, with $h_0$ small enough.
\end{thm}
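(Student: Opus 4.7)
The plan is to use a Strang-type argument: split the error with an interpolant, invoke discrete coercivity for the discrete component, and reduce everything to a consistency estimate. Write
\[
e = (u - \pi_h u^e) + (\pi_h u^e - u_h) =: \eta + \chi, \qquad \chi \in V_h.
\]
The interpolation piece $\tn \eta \tn_h$ is controlled in one stroke by combining the interpolation estimate \eqref{eq:interpol-energy} with the elliptic shift \eqref{eq:elliptic-shift}, giving $\tn \eta \tn_h \lesssim h^{k_u}\|u\|_{H^{k_u+1}_{\mathrm{tan}}(\Gamma)} \lesssim h^{k_u}\|f\|_{H^{k_u-1}_{\mathrm{tan}}(\Gamma)}$. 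For the discrete piece $\chi$, coercivity \eqref{eq:continuity-Ah} and the method equation $A_h(u_h,\chi) = l_h(\chi)$ yield
\[
\tn \chi \tn_h^2 \;\lesssim\; A_h(\pi_h u^e - u, \chi) + \bigl[A_h(u,\chi) - l_h(\chi)\bigr],
\]
so continuity \eqref{eq:coercivity-Ah} on the first summand reduces the whole problem to bounding the consistency error in the second bracket.

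To estimate $A_h(u,\chi) - l_h(\chi)$, I insert $\pm a(u,\chi^l)$ and $\pm l(\chi^l)$ to decompose it as
\[
A_h(u,\chi) - l_h(\chi) \;=\; -Q_a(u,\chi) + \bigl[a(u,\chi^l) - l(\chi^l)\bigr] + Q_l(\chi) + s_h(u,\chi).
\]
The geometric-error pieces $Q_a, Q_l$ are estimated directly by Lemma~\ref{lem:quad}, with Lemma~\ref{lem:A} used to replace $\|\chi\|_{H^1(\Gammah)}$ by $\tn \chi \tn_h$ and the elliptic shift used to replace $\|u\|_{H^1(\Gamma)}$ by $\|f\|_\Gamma$. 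The penalty piece $s_h(u,\chi)$ is handled via tangentiality of $u$: since $n\cdot u = 0$ on $\Gamma$, we have $n_h\cdot u^e = (n_h - n\circ p)\cdot u^e$, which is $O(h^{k_g})$ in $L^\infty(\Gammah)$, and combined with $\|n_h\cdot\chi\|_\Gammah \lesssim h\,\tn\chi\tn_h$ from the penalty this produces the limiting $|s_h(u,\chi)| \lesssim h^{k_g-1}\|f\|_\Gamma \tn\chi\tn_h$ contribution.

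The main obstacle is the remaining middle term $a(u,\chi^l) - l(\chi^l)$, which is nonzero precisely because $\chi^l$ fails to be tangent to $\Gamma$. Decomposing $\chi^l = \chi^l_t + \chi^l_n n$ on $\Gamma$, using the identity $\DPs(\chi^l_n\, n) = \chi^l_n\kappa$ from the tangential calculus, and invoking tangentiality of $f$ (so that $l(\chi^l) = l(\chi^l_t)$ while $a(u,\chi^l_t) = l(\chi^l_t)$), this term collapses to $(\DPs u, \chi^l_n\kappa)_\Gamma$. The delicate step is to control $\|\chi^l_n\|_\Gamma$ sharply: I pass to $\Gammah$ through $\chi^l_n\circ p = (n\circ p)\cdot\chi$, split $n\circ p = n_h + (n\circ p - n_h)$, and combine the penalty bound $\|n_h\cdot\chi\|_\Gammah \lesssim h\tn\chi\tn_h$ with the geometric bound $\|n\circ p - n_h\|_{L^\infty(\Gammah)} \lesssim h^{k_g}$ together with the discrete Poincar\'e inequality (Lemma~\ref{lem:poincare-discrete}). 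Collecting all four contributions, dividing by $\tn\chi\tn_h$, and applying the triangle inequality with the bound on $\tn\eta\tn_h$ yields \eqref{eq:errorest-energy} for $h$ small enough.
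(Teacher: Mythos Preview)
Your argument is correct and follows the same Strang-type decomposition as the paper: split $e$ into an interpolation part and a discrete part, use coercivity on the discrete part, and reduce to the geometric consistency terms $Q_a$, $Q_l$ together with $s_h(u,\chi)$. You are in fact more careful than the paper on one point: you explicitly isolate and bound the residual $a(u,\chi^l) - l(\chi^l)$, which is nonzero precisely because $\chi^l$ is not tangential to $\Gamma$. The paper's proof passes silently from $A_h(u,e_h)-l_h(e_h)$ to $-Q_a(u,e_h)+Q_l(e_h)+s_h(u,e_h)$, tacitly using $a(u,e_h^l)=l(e_h^l)$; your reduction of the residual to $(\DPs u,\chi^l_n\kappa)_\Gamma$ via $\DPs(\chi^l_n\, n)=\chi^l_n\kappa$ and tangentiality of $f$, followed by the bound $\|\chi^l_n\|_\Gamma\lesssim (h+h^{k_g})\tn\chi\tn_h$, is the right way to close it.

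One remark on your final sentence: the extra contribution you uncover is of order $(h+h^{k_g})\|f\|_\Gamma$, and this is absorbed by the stated $h^{k_g-1}\|f\|_\Gamma$ only when $k_g\le 2$. For $k_g\ge 3$ your argument actually delivers $h\|f\|_\Gamma$ in place of $h^{k_g-1}\|f\|_\Gamma$, so the claim ``yields \eqref{eq:errorest-energy}'' is slightly optimistic in that regime. This is not a defect of your approach---the paper's own proof, once the missing term is made explicit, incurs exactly the same loss---and the bound you obtain is the honest one.
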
 
\begin{proof} Let $e = u - \pi_h u + \underbrace{\pi_h u - u_h}_{e_h}$ and note 
that 
\begin{align}
\tn e \tn_h &\leq \tn u - \pi_h u \tn_h + \tn e_h \tn_h
\\
&\lesssim 
h^{k_u} \| u \|_{H^{k_u + 1}_{\mathrm{tan}}(\Gamma)} + \tn e_h \tn_h
\end{align}
To estimate $ \tn e_h \tn_h $ we add and subtract suitable terms
\begin{align}
\tn e_h \tn_h^2 
&= A_h(e_h,e_h)
\\
&= A_h(\pi_h u - u , e_h) + A_h(u - u_h , e_h)
\\
&= A_h(\pi_h u - u , e_h) + A_h(u,e_h)  - l_h(e_h)
\\
&= A_h(\pi_h u - u , e_h) + a_h(u_\thh,e_{h,\thh})\underbrace{-a(u,e_{h,t}) + l(e_{h,t})}_{=0}  - l_h(e_h) +s_h(u,e_h)
\\
&= A_h(\pi_h u - u , e_h) -  Q_a(u,e_h) + Q_l(e_h)  + s_h(u,e_h)
%\\
%&= A_h(\pi_h u - u , e_h) -  Q_a(u-\pi_h u,e_h) 
%+ Q_a(\pi_h u,e_h)+ Q_l(e_h)  + s_h(u,e_h)
\\
&\lesssim \tn \pi_h u - u \tn_h \tn  e_h \tn_h 
%+  \| u - \pi_h u \|_{H^1(\Gamma)} \tn e_h \tn_h
\\ \nonumber 
&\qquad +  h^{k_g} 
\underbrace{( \| u \|_{H^1(\Gammah)} + h^{-1} \| u_\nh \|_\Gammah )}_{\underset{(a)}\lesssim \| u \|_{H^1_{\mathrm{tan}}(\Gamma)}}   
\underbrace{ ( \|e_h \|_{H^1(\Gammah)} + h^{-1} \| e_{h,\nh} \|_\Gammah )}_{\underset{(b)}\lesssim \tn e_h \tn_h}
\\ \nonumber
&\qquad +  h^{k_g+1} \|f\|_\Gamma \underbrace{\| e_h \|_{\Gammah}}_{\underset{(c)} \lesssim \tn e_h \tn_h} 
\\ \nonumber
&\qquad
+  h^{k_p-1} \|u \|_\Gamma 
\underbrace{\| e_h \|_{s_h}}_{\lesssim \tn e_h \tn_h}
\end{align}
Here we used the identity $l(e_{h,t}) - l_h(e_h) = l(e_h) - l_h ( e_h) = Q_l (e_h)$, which 
holds since $f$ is tangential;  the geometric error bounds in Lemma~\ref{lem:quad};  
the estimates: (a) follows from (\ref{eq:quad-norm-cont}), (b) 
follows from (\ref{eq:disc-bound-c}), and (c) follows from (\ref{eq:poincare-discrete});
%
%\begin{align}
% \| u \|_{H^1(\Gammah)} + h^{-1} \| u_\nh \|_\Gammah 
% &=  \| u \|_{H^1(\Gamma)} + h^{-1} \| (n - n_h) \cdot u \|_\Gammah 
%\\ 
%  &=  \| u \|_{H^1(\Gamma)} + h^{k^g -1} \| u \|_\Gammah 
% \\
% &\lesssim ( 1 + h^{k_g -1} ) \| u \|_{H^1_{\mathrm{tan}}(\Gamma)}
%\\
%&\lesssim  \| u \|_{H^1_{\mathrm{tan}}(\Gamma)}
%\end{align}
%where we used equivalence of norms \eqref{eq:normequ} and (\ref{eq:H-Ht-equivalence}) 
%for the first term and the fact that $u$ is tangential to subtract the exact normal and the 
%bound  (\ref{eq:geombounds}) for the error in the normal, (b)
%\begin{align}
%\| e_h \|_{H^1(\Gammah)} + h^{-1} \| e_{h,\nh} \|_\Gammah 
%&\lesssim 
%\| e_h \|_\Gammah + \| e_h \otimes \nablash \|_\Gammah + h^{-1} \| e_{h,\nh} \|_\Gammah 
%\lesssim \tn e_h \tn_h 
%\end{align}
%where we used the discrete Poincar\'e inequality in Lemma \ref{lem:poincare-discrete} 
%and Lemma \ref{lem:A},  (c) follows from the discrete Poincar\'e inequality 
%(\ref{eq:poincare-discrete}), 
and 
\begin{align} \label{eq:sh-bound-first}
s_h(u,e_h)
&\leq
\| u \|_{s_h} \|e_h\|_{s_h}
\\
&=
\beta h^{-1} \| \widetilde{n}_h \cdot u\|_{\Gammah} \|e_h\|_{s_h}
\\
&=
\beta h^{-1} \| (\widetilde{n}_h - n) \cdot u\|_{\Gammah} \|e_h\|_{s_h}
%\\
%&\lesssim
%h^{-1} \| \widetilde{n} - n_h \|_{L^\infty(\Gammah)} \|u\|_{\Gammah} \|e_h\|_{s_h}
\\
&\lesssim \label{eq:sh-bound-last}
h^{k_p - 1} \|u\|_{\Gamma} \|e_h\|_{s_h}
\end{align}
where we used  the fact that 
$u$ is tangential to subtract $n$ and the bound \eqref{eq:nhtilde-assumption}.

Finally, using the interpolation error estimate (\ref{eq:interpol-energy}),
the Poincaré inequality \eqref{eq:poincare-cont}, and 
the trivial inequality $\| f \|_\Gamma\lesssim \| u \|_{H^2_\mathrm{tan}(\Gamma)}$ we obtain
\begin{align}
\tn e_h \tn_h
&\lesssim h^{k_u}\| u\|_{H^{k_u+1}_{\mathrm{tan}}(\Gamma)} 
+ h^{k_g }  \|u\|_{H^{1}_{\mathrm{tan}}(\Gamma)} 
+ h^{k_g+1} \| f \|_\Gamma
+ h^{k_p-1} \| u \|_\Gamma 
\\
&\lesssim (h^{k_u} + h^{k_g} + h^{k_p-1})\| u \|_{H^{k_u+1}_{\mathrm{tan}}(\Gamma)} 
\label{eq:eh-bound-last} 
\end{align}
which concludes the proof.
\end{proof}

\begin{thm}[${\boldsymbol L}^{\boldsymbol 2}$ Error Estimate] \label{thm:error-L2}
Under the same assumptions as in Theorem~\ref{thm:errorest-energy} and $k_p\geq 2$ the following estimate holds
\begin{equation}
\| e \|_\Gamma \lesssim
(h^{k_u+1} + h^{k_g+1} + h^{k_p})\| u \|_{H^{k_u+1}_{\mathrm{tan}}(\Gamma)} 
\end{equation}
\end{thm}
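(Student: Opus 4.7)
The plan is an Aubin--Nitsche duality argument combined with the geometric error bounds from Lemma~\ref{lem:quad} and the energy estimate of Theorem~\ref{thm:errorest-energy}. First, I split the error on $\Gamma$ into tangential and normal components, $\|e\|_\Gamma \leq \|\Ps e^l\|_\Gamma + \|\Qs e^l\|_\Gamma$, and dispense with the normal component directly: since $u$ is tangential, $\Qs e^l = -\Qs u_h^l$, and writing $n = n_h + (n - n_h)$ together with the geometry assumption \eqref{eq:geombounds} gives
$$\|\Qs u_h\|_{\Gammah} \leq \|n_h \cdot u_h\|_\Gammah + \|(n - n_h) \cdot u_h\|_\Gammah \lesssim h \|u_h\|_{s_h} + h^{k_g} \|u_h\|_\Gammah.$$
The energy bound applied to $\tn u_h \tn_h \leq \tn u \tn_h + \tn e \tn_h$ together with Lemma~\ref{lem:poincare-discrete} controls both right-hand sides and yields the $h^{k_g}\|f\|_\Gamma$ contribution from the normal part.

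For the tangential component I set up the dual problem: let $\phi \in \Honet$ solve $a(\phi, v) = (\Ps e^l, v)_\Gamma$ for all $v \in \Honet$, so that \eqref{eq:elliptic-shift} gives $\|\phi\|_{H^2_{\text{tan}}(\Gamma)} \lesssim \|\Ps e^l\|_\Gamma$. Writing $\Ps e^l = e^l - \Qs e^l$ yields
$$\|\Ps e^l\|_\Gamma^2 = a(\phi, e^l) - a(\phi, \Qs e^l),$$
where the second term is handled using the identity $\DPs(\Qs e^l) = (e^l \cdot n)\kappa$, boundedness of $\kappa$, and absorption into the normal-part estimate. For $a(\phi, e^l)$ I transfer to $\Gammah$ via $a(\phi, e^l) = a_h(\phi, e) + Q_a(\phi, e^l)$, with Lemma~\ref{lem:quad} controlling the geometric residue at order $h^{k_g}$. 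Inserting the interpolant $\pi_h\phi$, using the discrete equation $A_h(u_h, \pi_h\phi) = l_h(\pi_h\phi)$ and the continuous identity $a(\Ps \pi_h\phi, u) = l(\pi_h\phi)$ (which holds because $f$ is tangential), yields the decomposition
$$a_h(\phi, e) = a_h(\phi - \pi_h \phi, e) + \bigl[a(\pi_h \phi, u) - l(\pi_h \phi)\bigr] - Q_a(\pi_h \phi, u) + Q_l(\pi_h \phi) + s_h(u_h, \pi_h \phi).$$
By \eqref{eq:interpol-energy}, $\tn \phi - \pi_h \phi \tn_h \lesssim h\|\phi\|_{H^2_{\text{tan}}(\Gamma)}$, so the first piece combined with Theorem~\ref{thm:errorest-energy} delivers the leading $h^{k_u+1}\|f\|_{H^{k_u-1}_{\text{tan}}}$ and $h^{k_g}\|f\|_\Gamma$ contributions after the Cauchy--Schwarz inequality.

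The most delicate piece is the consistency residual $a(\pi_h \phi, u) - l(\pi_h \phi) = a(\Qs \pi_h \phi, u) = ((\pi_h \phi \cdot n)\kappa, \DPs u)_\Gamma$, which is nonzero precisely because $\pi_h \phi$ is not tangential. Using $\phi \cdot n = 0$ to write $\pi_h \phi \cdot n = (\pi_h \phi - \phi)\cdot n$ and interpolation to bound $\|\pi_h \phi - \phi\|_\Gamma \lesssim h^2\|\phi\|_{H^2_{\text{tan}}(\Gamma)}$ gives this term of size $h^2\|\phi\|_{H^2_{\text{tan}}(\Gamma)}\|u\|_{H^1_{\text{tan}}(\Gamma)}$. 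The penalty contribution $s_h(u_h, \pi_h \phi) \leq \|u_h\|_{s_h}\|\pi_h \phi\|_{s_h}$ is handled similarly via
$$\|n_h \cdot \pi_h \phi\|_\Gammah \leq \|n_h \cdot (\pi_h \phi - \phi)\|_\Gammah + \|(n_h - n)\cdot \phi\|_\Gammah \lesssim h^2\|\phi\|_{H^2_{\text{tan}}(\Gamma)} + h^{k_g}\|\phi\|_\Gamma,$$
so that $\|\pi_h \phi\|_{s_h} \lesssim h\|\phi\|_{H^2_{\text{tan}}(\Gamma)}$ precisely when $k_g \geq 2$, while $\|u_h\|_{s_h} \leq \tn e \tn_h + h^{k_g-1}\|u\|_\Gamma$ is controlled via the energy estimate. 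With the $Q_a, Q_l$ terms handled by Lemma~\ref{lem:quad}, I collect all contributions, invoke $\|\phi\|_{H^2_{\text{tan}}(\Gamma)} \lesssim \|\Ps e^l\|_\Gamma$, and close with a kickback argument. The main obstacle is balancing the two competing non-tangential sources of error: the penalty residual and the consistency residual both couple the bound to $\|\phi\|_{H^2_{\text{tan}}(\Gamma)}$ via the factor $h$, and the hypothesis $k_g \geq 2$ is precisely what makes the geometric normal approximation compatible with this factor so that the full order $h^{k_g}$ is preserved throughout.
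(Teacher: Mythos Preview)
Your proposal is correct and follows the same Aubin--Nitsche strategy as the paper: split $e$ into tangential and normal parts, control the normal part directly from the penalty and the geometry bound, and run a duality argument on the tangential part using the same auxiliary bounds on $\|u_h\|_{s_h}$, $\|\pi_h\phi\|_{s_h}$, and Lemma~\ref{lem:quad}. The bookkeeping differs only in where the non-tangential defects are parked: the paper keeps $e_t$ in the continuous form $a$ and collects the normal correction into a single term $a(u_{h,n},\pi_h\phi)$, whereas you pass to $a_h$ first and split the defect between $a(\phi,\Qs e)$ and the consistency residual $a(\Qs\pi_h\phi,u)$; both routes reduce to the identity $\DPs(v_n n)=v_n\kappa$ and yield the same orders.

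Two small points to tighten. First, in your normal-part paragraph, bounding $\|u_h\|_{s_h}$ via $\tn u_h\tn_h\le\tn u\tn_h+\tn e\tn_h$ loses an order (since $\tn u\tn_h\sim\|f\|$, not $h^{k_g-1}\|f\|$); you should instead use the sharper $\|u_h\|_{s_h}\le\|e\|_{s_h}+\|u\|_{s_h}\le\tn e\tn_h+h^{k_g-1}\|u\|_\Gamma$, exactly as you do later for the penalty term. Second, your term $Q_a(\phi,e^l)$ requires control of $\|e\|_{H^1(\Gammah)}$; since $e\notin V_h$ you cannot invoke Lemma~\ref{lem:A} directly, so split $e=(u-\pi_h u)+(\pi_h u-u_h)$ and use interpolation on the first piece and Lemma~\ref{lem:A} on the second. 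The paper sidesteps both issues by applying $Q_a$ to $u_h\in V_h$ rather than to $e$.
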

\begin{proof} Splitting the error in a tangential and normal part
\begin{equation}
\| e \|_\Gamma \leq \| e_t \|_\Gamma + \| e_n \|_\Gamma
\end{equation}
Here we have the following estimate of the normal component
\begin{align}
\| e_n \|_\Gamma 
&\lesssim 
\|e\cdot n \|_\Gammah
\\
&\lesssim
\|e\cdot (n-\widetilde{n}_h) \|_\Gammah + \|e\cdot \widetilde{n}_h \|_\Gammah
\\
&\lesssim
h^{k_p} \|e \|_\Gammah + \|e\cdot \widetilde{n}_h \|_\Gammah
\\
&\lesssim
h^{k_p} \|e \|_\Gamma + h \tn e  \tn_h
\\
&\lesssim
 h^{k_p} \|e_t \|_\Gamma + h^{k_p} \|e_n  \|_\Gamma + h \tn e  \tn_h
\end{align}
Using kickback and the energy norm estimate we obtain
\begin{equation}
\| e_n \|_\Gamma 
\lesssim
 h^{k_p}\| e_t \|_\Gamma
 + h (h^{k_u} + h^{k_g} + h^{k_p-1})\| u\|_{H^{k_u+1}_{\mathrm{tan}}(\Gamma)}
\label{eq:enormal-est}
\end{equation}

Next to estimate the tangential part of the error we introduce the dual problem: 
find $\phi \in \Honet$ such that
\begin{equation}
a(v,\phi) = (v,\psi)\qquad \forall v \in \Honet
\end{equation}
where $\psi \in L^2(\Gamma)$ is tangential. 
As $\psi \in L^2(\Gamma)$ we by (\ref{eq:elliptic-stab}) have the elliptic stability
\begin{equation}\label{eq:phi-stab}
\| \phi \|_{H^2_{\mathrm{tan}}(\Gamma)} \lesssim \| \psi \|_\Gamma 
\end{equation}
Setting $v = \psi = e_t$, and adding and subtracting suitable terms we obtain
\begin{align}
\| e_t \|_\Gamma^2
&=
(e_t, \psi)_\Gamma 
\\&= a(e,\phi) 
\\
&= a(e,\phi - \pi_h \phi) + a(e, \pi_h \phi )
\\
&=  a(e,\phi - \pi_h \phi)  + l( \pi_h \phi )  - a(u_h, \pi_h \phi )
\\
&=  a(e,\phi - \pi_h \phi) + l( \pi_h \phi )
\, \underbrace{-\, l_h(\pi_h \phi ) + A_h(u_h,\pi_h \phi )}_{=0}
- a(u_{h}, \pi_h \phi )
\\
&=  a(e,\phi - \pi_h \phi) + Q_l( \pi_h \phi )  -  Q_a(u_h,\pi_h \phi )
 +  s_h (u_h, \pi_h \phi )
\\
&=  a(e,\phi - \pi_h \phi) + Q_l( \pi_h \phi )  + Q_a(e_h,\pi_h \phi ) 
+  s_h (u_h, \pi_h \phi )
-  \underbrace{Q_a(\pi_h u,\pi_h \phi )}_{\bigstar}
\end{align}
where $e_h = \pi_h u - u_h$ as above and we especially indicate the last term $\bigstar$ as the bound for this term does not directly follow from standard calculations.
Using the Cauchy--Schwarz inequality, interpolation estimates, Lemma~\ref{lem:quad}, and bounds which we list and verify below we obtain
\begin{align}
\label{eq:etL2-first}
\| e_t \|_\Gamma^2
&\lesssim
  \| e_t \|_a  \| \phi - \pi_h \phi \|_a  
\\ \nonumber &\quad
  + h^{k_g+1}  \| f \|_\Gamma \tn \pi_h \phi \tn_h 
\\ \nonumber &\quad
+ h^{k_g} \tn e_h \tn_h \tn \pi_h \phi \tn_h
\\ \nonumber &\quad
+ \| u_{h,n} \|_a \| \pi_h \phi \|_a
\\ \nonumber &\quad
+  \| u_h\|_{s_h} \| \pi_h \phi \|_{s_h}
\\ \nonumber &\quad
%+ | Q_a(\pi_h u,\pi_h\phi) |
+ | \bigstar |
\\
&\lesssim
h \left( h^{k_g}\| e_t \|_\Gamma + (1+h)(h^{k_u} + h^{k_g} + h^{k_p-1})\| u\|_{H^{k_u+1}_{\mathrm{tan}}(\Gamma)} \right) \|\phi  \|_{H^2(\Gamma)}
\\ \nonumber &\quad
+ h^{k_g+1} \| f \|_\Gamma \| \phi \|_{H^2_\mathrm{tan}(\Gamma)}
\\ \nonumber &\quad
+ h^{k_g} (h^{k_u} + h^{k_g} + h^{k_p-1})\| u\|_{H^{k_u+1}_{\mathrm{tan}}(\Gamma)} \|\phi\|_{H^2_\mathrm{tan}(\Gamma)}
\\ \nonumber &\quad 
+ \left( h^{k_p}\| e_t \|_\Gamma + h (h^{k_u} + h^{k_g} + h^{k_p-1})\| u\|_{H^{k_u+1}_{\mathrm{tan}}(\Gamma)} \right)  \|  \phi \|_{H^2_\mathrm{tan}(\Gamma)}
\\ \nonumber &\quad
+ (h + h^{k_p -1}) (h^{k_u} + h^{k_g} + h^{k_p-1})\| u\|_{H^{k_u+1}_{\mathrm{tan}}(\Gamma)}  \|  \phi \|_{H^2_{\mathrm{tan}}(\Gamma)}
\\ \nonumber &\quad
+ h^{k_g+1}
\| u \|_{H^2_{\mathrm{tan}}(\Gamma)} \| \phi \|_{H^2_{\mathrm{tan}}(\Gamma)}
\\
&\lesssim\Big( (h^{k_g+1}+h^{k_p})\|e_t\|_\Gamma \label{eq:etL2-last}
\\&\qquad\nonumber
+
\underbrace{(1 +h^{k_p-2} )}_{\lesssim 1 \text{ for } k_p \geq 2} (h^{k_u+1} + h^{k_g+1} + h^{k_p})\| u\|_{H^{k_u+1}_{\mathrm{tan}}(\Gamma)}
%\\&\qquad\nonumber
\Big)
\underbrace{\| \psi \|_{\Gamma}}_{=\| e_t \|_{\Gamma}
}
\end{align}
where we in the last inequality use $\| f \|_\Gamma\lesssim \| u \|_{H^2_\mathrm{tan}(\Gamma)}$ and the stability estimate \eqref{eq:phi-stab}.
Hiding the $(h^{k_g+1}+h^{k_p})\| e_t \|_\Gamma$ term on the right with a kickback argument and recalling that $k_p \geq 2$, together with the bounds we verify below, completes the proof.

\paragraph{Bounds used in (\ref{eq:etL2-first})--(\ref{eq:etL2-last}).}
We used the following bounds on the error
\begin{align}
 \label{eq:e-equiv}
\| e_t \|_a &\lesssim
h^{k_g} \| e_t\|_\Gamma
+ (1+h) (h^{k_u} + h^{k_g} + h^{k_p-1})\| u\|_{H^{k_u+1}_{\mathrm{tan}}(\Gamma)}
\\ \label{eq:eh-equiv}
\tn e_h \tn &\lesssim (h^{k_u} + h^{k_g} + h^{k_p-1})\| u\|_{H^{k_u+1}_{\mathrm{tan}}(\Gamma)}
\end{align}
on the discrete solution
\begin{align}
\label{eq:uhn}
\| u_{h,n} \|_a &\lesssim 
 h^{k_p}\| e_t \|_\Gamma
 +h (h^{k_u} + h^{k_g} + h^{k_p-1})\| u\|_{H^{k_u+1}_{\mathrm{tan}}(\Gamma)}
\\ \label{eq:sh-cont}
\| u_h \|_{s_h}
&\lesssim (h^{k_u} + h^{k_g} + h^{k_p-1})\| u\|_{H^{k_u+1}_{\mathrm{tan}}(\Gamma)}
\end{align}
on the interpolant
\begin{align} \label{eq:pih-stab-energy}
\tn \pi_h \phi \tn_h 
&\lesssim
\| \phi \|_{H^2_\mathrm{tan}(\Gamma)} 
\\ \label{eq:pih-stab-a}
\| \pi_h \phi \|_a
&\lesssim  \| \phi \|_{H^2_\mathrm{tan}(\Gamma)}
\\ \label{eq:pih-stab-sh}
\| \pi_h \phi \|_{s_h} 
&\lesssim 
(h + h^{k_p -1})  \|  \phi \|_{H^2_{\mathrm{tan}}(\Gamma)}
\end{align}
and on the special term
\begin{align}\label{eq:special-est}
|\bigstar|
= | Q_a(\pi_h u,\pi_h \phi ) |
\lesssim
h^{k_g+1}
\| u \|_{H^2_{\mathrm{tan}}(\Gamma)} \| \phi \|_{H^2_{\mathrm{tan}}(\Gamma)}
\end{align}

\paragraph{Verification of (\ref{eq:e-equiv}).}
By adding and subtracting suitable terms, applying the triangle inequality and using the identity $\Ps e_n\otimes \nablas = (e\cdot n) \kappa$ we get
\begin{align}
\| e_t \|_a 
&\lesssim \| e \|_a + \| e\cdot n \|_{\mcK_h}
\\&\leq \| e_h \|_a + \| u - \pi_h u \|_a +  \| e_n \|_{\Gamma}
\end{align}
where $e_h=\pi_h u - u_h$ as above.
We get the final bound by; on the first term applying equivalence of norms \eqref{eq:normequ} and estimate \eqref{eq:disc-bound-c} yielding
\begin{align}
\| e_h \|_a \lesssim \| e_h \|_{H^1(\Gamma)} &\lesssim \| e_h \|_{H^1(\Gammah)} \lesssim \tn e_h \tn_h
\lesssim (h^{k_u} + h^{k_g} + h^{k_p-1})\| u\|_{H^{k_u+1}_{\mathrm{tan}}(\Gamma)}
\end{align}
where the last inequality comes from the bound \eqref{eq:eh-bound-last};
on the second term applying an interpolation estimate; and on the last term using the bound \eqref{eq:enormal-est} on $\|e_n \|_\Gamma$.

\paragraph{Verification of (\ref{eq:eh-equiv}).} 
This bound directly holds by \eqref{eq:eh-bound-last}.

%\paragraph{Verification of (\ref{eq:discrete-stability}).}
%Using the definition of the energy norm and the method
%this bound directly follows from the Cauchy--Schwarz inequality and the discrete Poincaré inequality Lemma~\ref{lem:poincare-discrete}.

\paragraph{Verification of (\ref{eq:uhn}).}
By the identity $\Ps u_{h,n}\otimes \nablas = (u_{h}\cdot n) \kappa = ((u_{h}-u)\cdot n) \kappa$ we have
\begin{align}
\| u_{h,n} \|_a 
&=
\| \Ps u_{h,n}\otimes \nablas \|_\Gamma
=
\| ((u_{h}-u)\cdot n) \kappa \|_\Gamma
\lesssim \|e_n \|_\Gamma 
\end{align}
and the bound then follows by using estimate \eqref{eq:enormal-est}.

\paragraph{Verification of (\ref{eq:sh-cont}).}
Applying the triangle inequality and utilizing the fact that $u$ is tangential similarly to the calculation in \eqref{eq:sh-bound-first}--\eqref{eq:sh-bound-last} we have
\begin{align}
\| u_h \|_{s_h} 
&\leq \| u - u_h \|_{s_h} + \| u \|_{s_h}
\\&
\lesssim \tn e \tn_h + h^{k_p-1} \| u \|_\Gamma
\end{align}
Applying the energy error estimate gives the final bound.

\paragraph{Verification of (\ref{eq:pih-stab-energy}).} 
First note that by the triangle inequality, the bound on $n_h$, and an interpolation estimate we have
\begin{align}
\| \Dsh \pi_h \phi \|_\Gammah
&\lesssim
\| (\pi_h \phi) \otimes \nablash \|_\Gammah
\\
&\lesssim
\| \phi \otimes \nablash \|_\Gammah
+
\| (\phi - \pi_h \phi) \otimes \nablash \|_\Gammah
\\
&\lesssim \label{eq:ksvjdn}
\| \phi \otimes \nablas \|_\Gamma
+
h \| \phi \|_{H^2(\Gamma)}
\\
&\lesssim
\| \phi \|_{H^2(\Gamma)}
\end{align}
where we in \eqref{eq:ksvjdn} utilize the equivalence of norms \eqref{eq:normequ}.
Adding and subtracting suitable terms, applying the triangle inequality and interpolation and geometry estimates then yield
\begin{align}
\tn \pi_h \phi \tn_h^2
&= 
\| \Dsh \pi_h \phi \|^2_\Gammah
+ 
h^{-2} \| \widetilde{n}_h \cdot \pi_h \phi \|^2_\Gammah
\\
&\lesssim \| \phi \|^2_{H^2(\Gamma)}
+ 
h^{-2} \| \widetilde{n}_h \cdot (\pi_h \phi - \phi ) \|^2_\Gammah
+ 
h^{-2} \| (\widetilde{n}_h-n) \cdot \phi  \|^2_\Gammah
\\
&\lesssim
\underbrace{( 1 + h^{2k_p - 2 } )}_{\lesssim 1 \text{ for } k_p \geq 1} \| \phi \|^2_{H^2(\Gamma)}
%\\&
\lesssim
\| \phi \|^2_{H^2_{\mathrm{tan}}(\Gamma)}
\end{align}
where we in the last inequality use Lemma~\ref{lem:H-Ht-equivalence} to move onto covariant derivatives.

\paragraph{Verification of (\ref{eq:pih-stab-a}).}
Adding and subtracting terms and applying an interpolation estimate yield
\begin{align}
\| \pi_h \phi \|_a
&\leq \| \phi - \pi_h \phi \|_a + \| \phi \|_a
%\\&
\lesssim h \| \phi \|_{H^2(\Gamma)} + \| \phi \|_{\Honet}
%\\&
\lesssim \| \phi \|_{H^2_\mathrm{tan}(\Gamma)}
\end{align}
where we finally use Lemma~\ref{lem:H-Ht-equivalence} to move onto covariant derivatives.

\paragraph{Verification of (\ref{eq:pih-stab-sh}).}
This bound is established as follows
\begin{align}
\| \pi_h \phi \|_{s_h} 
&= \beta h^{-1} \| \widetilde{n}_h \cdot \pi_h \phi \|_{\Gammah}
\\&\lesssim 
h^{-1} \| \widetilde{n}_h  \cdot (\pi_h \phi - \phi)  \|_{\Gammah} 
+
h^{-1} \| (\widetilde{n}_h - n ) \cdot \phi \|_{\Gammah} 
\\
&\lesssim 
h \|  \phi  \|_{H^2(\Gamma)} 
+
h^{k_p - 1} \| \phi \|_{\Gamma}
\\&
\lesssim 
(h + h^{k_p - 1})\|  \phi \|_{H^2_\mathrm{tan}(\Gamma)} 
\end{align} 
 where we added and subtracted suitable terms, used the fact that $\phi$ is 
 tangential, interpolation and geometry estimates, and finally Lemma~\ref{lem:H-Ht-equivalence}.

\paragraph{Verification of (\ref{eq:special-est}).}
To achieve a bound of the right order for this last term we need to utilize the higher regularity of $u,\phi\in H^2_\mathrm{tan}(\Gamma)$. The bound is obtained by
\begin{align} \label{eq:star-est-a}
| \bigstar | &=
| Q_a(\pi_h u,\pi_h\phi) |
\\&= \label{eq:star-est-b}
| Q_a(\pi_h u - u,\pi_h\phi- \phi)
+
Q_a(u,\pi_h\phi- \phi)
\\&\qquad\nonumber
+
Q_a(\pi_h u - u,\phi)
%\\&\quad\nonumber
+
Q_a(u,\phi) |
\\
&\lesssim \label{eq:star-est-c}
h^{k_g}
\left(\| u - \pi_h u \|_{H^1(\Gammah)} + h^{-1} \| (u - \pi_h u)\cdot\nh \|_\Gammah\right)
\\&\qquad\quad \cdot \nonumber
\left(\| \phi - \pi_h\phi \|_{H^1(\Gammah)} + h^{-1} \| (\phi - \pi_h\phi)\cdot\nh \|_\Gammah\right)
\\&\qquad+ \nonumber
h^{k_g}\| u \|_{H_\mathrm{tan}^1(\Gamma)}
\left(\| \phi - \pi_h\phi \|_{H^1(\Gammah)} + h^{-1} \| (\phi - \pi_h\phi)\cdot\nh \|_\Gammah\right)
\\&\qquad+ \nonumber
h^{k_g}
\left(\| u - \pi_h u \|_{H^1(\Gammah)} + h^{-1} \| (u - \pi_h u)\cdot\nh \|_\Gammah\right)
\| \phi \|_{H_\mathrm{tan}^1(\Gamma)}
\\&\qquad+ \nonumber
h^{k_g+1}
\| u \|_{H^2_\mathrm{tan}(\Gamma)}\|\phi\|_{H^2_\mathrm{tan}(\Gamma)}
\\&\lesssim \label{eq:star-est-d}
\underbrace{(h^{k_g+2 k_u} + h^{k_g+k_u} + h^{k_g+1})}_{\lesssim h^{k_g+1} \text{ for }k_u\geq 1}
\| u \|_{H^2_\mathrm{tan}(\Gamma)}\|\phi\|_{H^2_\mathrm{tan}(\Gamma)}
\end{align}
where in \eqref{eq:star-est-b} we add and subtract suitable terms; in \eqref{eq:star-est-c} we apply \eqref{eq:quad-a} to all but the last term to which we instead apply the higher regularity bound \eqref{eq:quad-a-tan}; and in \eqref{eq:star-est-d} we apply the interpolation estimate \eqref{eq:interpol}.
\end{proof}

\section{Numerical Results} \label{section:numerics}

\subsection{Implementation Aspects}

\paragraph{Experimental Set-up.} For our numerical experiments we implemented the variations of the method in {\sc Matlab}
and used its built-in backslash operator, i.e., a direct solver, to solve the resulting sparse linear system of equations. All experiments were run on a computer with 64\,GB memory.

\paragraph{Construction of Geometry Approximations.}
To construct a higher-order geometry approximation $\Gammah$ we started from
a piecewise linear mesh $\mcK_{h,1}$ and composed a parametric mesh $\mcK_{h,k_g}$ by adding nodes for higher-order Lagrange basis functions on each facet $K\in\mcK_{h,1}\subset U_{\delta_0}(\Gamma)$ and mapping the positions of these nodes onto the exact surface $\Gamma$ by the closest point map $p:U_{\delta_0}\rightarrow \Gamma$. In our experiments we consider $1 \leq k_g \leq 5$.

To investigate whether or not convergence is dependent of the mesh structure we also used perturbed meshes, which were generated by randomly moving the mesh vertices in $\mcK_{h,1}$ a distance proportional to $h$ and then mapping the vertices back onto $\Gamma$ by the closest point map.

\paragraph{Penalty Term Normal Approximations.}
The $L^2(\Gammah)$ error estimate (Theorem~\ref{thm:error-L2}) implies that, in order to achieve optimal order convergence, a better approximation of the normal in the penalty term is required.
In our numerical experiments we have access to the true normal on $\Gamma$ and we can thus readily construct approximations of arbitrary order. As described in Section~\ref{sec:method} the improved normal approximations in the penalty term in our implementation are based on node-wise interpolation of the exact normal. Such an implementation is actually very convenient as we are able to deliver an optimal order method using the same order basis functions for the solution, the geometry, and the penalty term normal.

Alternatively, if the normal to the discrete geometry $\Gammah$ is used in the penalty term, a higher-order geometry approximation could be used to achieve optimal order convergence. This alternative, however, requires higher-order basis functions for the geometry approximation. In our experiments below we consider both options.

\subsection{Model Problem and Numerical Example} \label{sec:model-problem}

\paragraph{Geometry.}
The surface of a torus can be expressed in Cartesian coordinates as
\begin{align}
\begin{bmatrix}
x \\
y \\
z
\end{bmatrix}
=
\begin{bmatrix}
(R+r \cos(\theta)) \cos(\phi) \\
(R+r \cos(\theta)) \sin(\phi) \\
r \sin(\theta)
\end{bmatrix}
\end{align}
where $0 \leq \theta,\phi < 2\pi$ are angles and $R,r > 0$ are fixed radii. For our model problem we consider such a geometry with radii $R=1$ and $r=0.6$. Any point on the torus surface can thus be specified using the toroidal coordinates $\{\theta,\phi\}$.
This surface is illustrated in Figure~\ref{fig:mesh1} where we present an example mesh $\mcK_{h,1}$ describing a piecewise linear surface approximation $\Gamma_{h,1}$ of the torus, and in Figure~\ref{fig:mesh2} we illustrate a perturbed version of the same mesh.

\begin{figure}
\centering
\subfigure[Structured mesh]{
\includegraphics[width=0.35\linewidth]{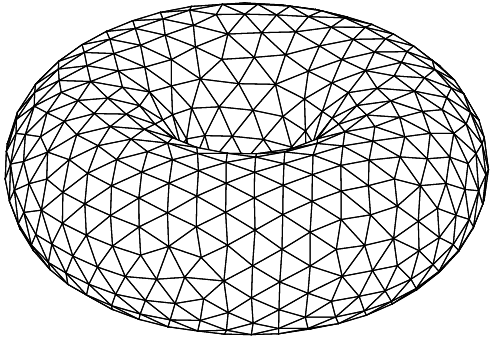}
\label{fig:mesh1}
}
\quad
\subfigure[Perturbed mesh]{
\includegraphics[width=0.35\linewidth]{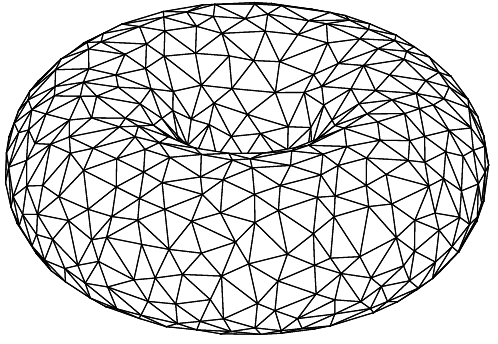}
\label{fig:mesh2}
}
\caption{\textbf{Example meshes.} (a) Structured mesh with mesh size $h=0.25$. (b)~Perturbed version of the same mesh.}
\label{fig:mesh}
\end{figure}

\paragraph{Manufactured Problem.}
We manufacture problems on this geometry from the following ansatz as our analytical tangential vector field solution (expressed in Cartesian coordinates)
\begin{align}
u =
\begin{bmatrix}
- r\sin(3\phi + \theta)\cos(\phi)^2 \sin(\theta) - \cos(\phi + 3\theta)\sin(3\phi)\sin(\phi)(R + r\cos(\theta)) \\
 \cos(\phi + 3\theta)\sin(3\phi)\cos(\phi)(R + r\cos(\theta)) - r\sin(3\phi + \theta)\cos(\phi)\sin(\phi)\sin(\theta) \\
r\sin(3\phi + \theta)\cos(\phi)\cos(\theta)
\end{bmatrix}
\label{eq:ansatz}
\end{align}
and we calculate the corresponding load tangential vector field for both the standard problem \eqref{eq:problem} and the symmetric problem \eqref{eq:problem-sym}. The analytical solution is illustrated in Figure~\ref{fig:field-analytical}.

\begin{figure}
\centering
\subfigure[Analytical solution]{
\includegraphics[width=0.35\linewidth]{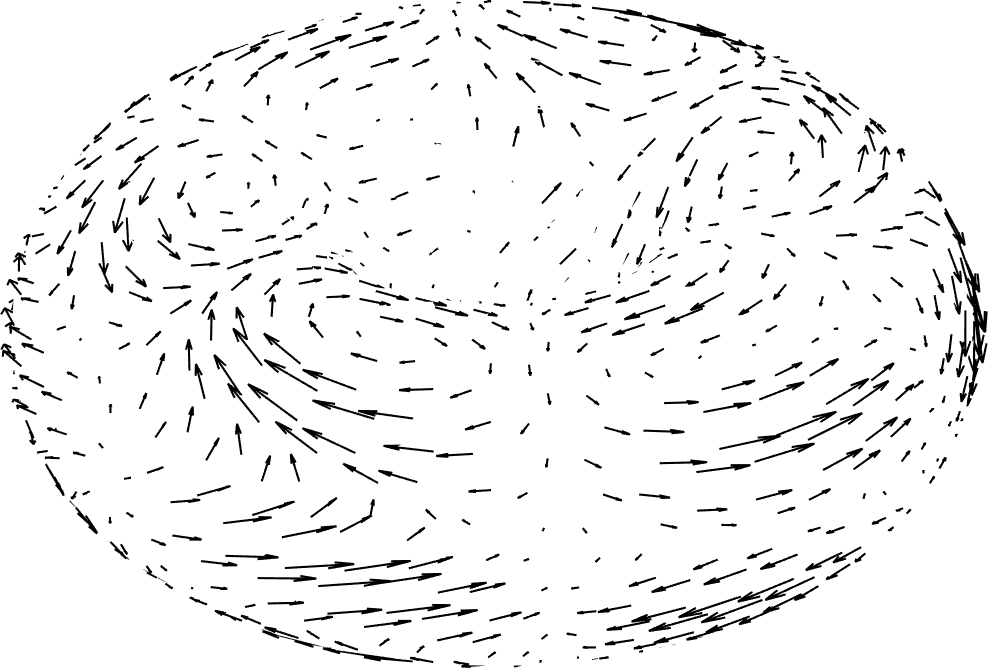}
\label{fig:field-analytical}
}
\quad
\subfigure[Numerical solution]{
\includegraphics[width=0.35\linewidth]{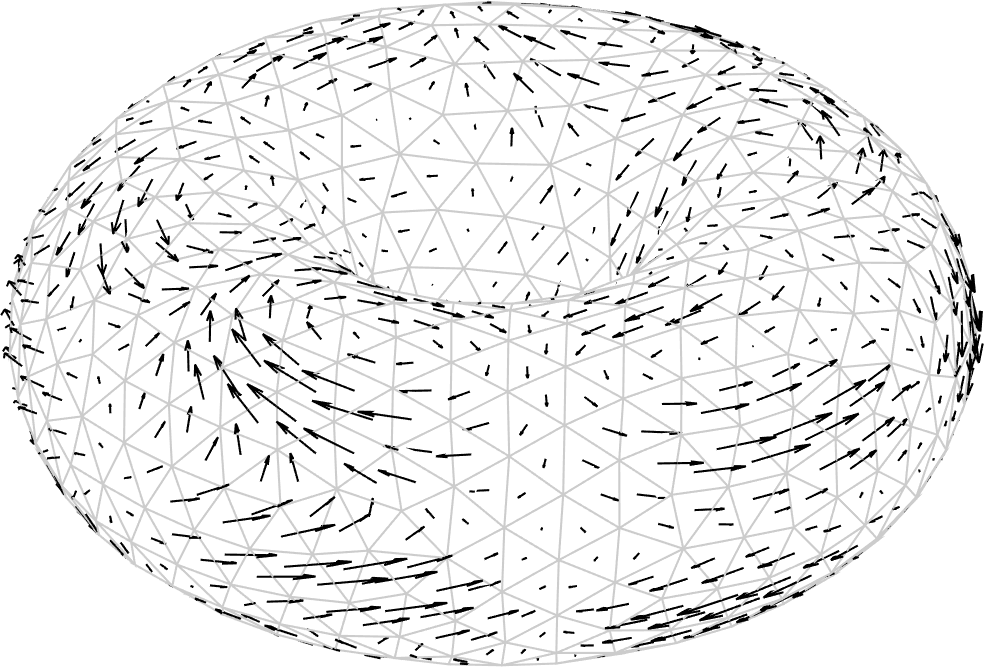}
\label{fig:field-numerical}
}
\caption{\textbf{Solution.} (a) Illustration of the analytical solution \eqref{eq:ansatz} to the model problem. (b) Numerical solution for the standard formulation of the model problem using isoparametric linear finite elements ($h=0.25$).}
\label{fig:field}
\end{figure}

\paragraph{A Numerical Example.}
A numerical solution to the model problem using the standard formulation of the vector Laplacian is shown in Figure~\ref{fig:field-numerical}. In Figure~\ref{fig:error} we present the magnitude of the pointwise error over $\Gamma_h$ using piecewise linear finite elements which varying of the geometry and normal approximations. The results confirm what we can suspect from looking at our estimates; when using isoparametric elements and the normal of $\Gamma_h$ in the penalty term, the dominating error seems to stem from the normal approximation in the penalty term.

\begin{figure}
\centering
\subfigure[$k_g=k_u=1$]{
\includegraphics[width=0.31\linewidth]{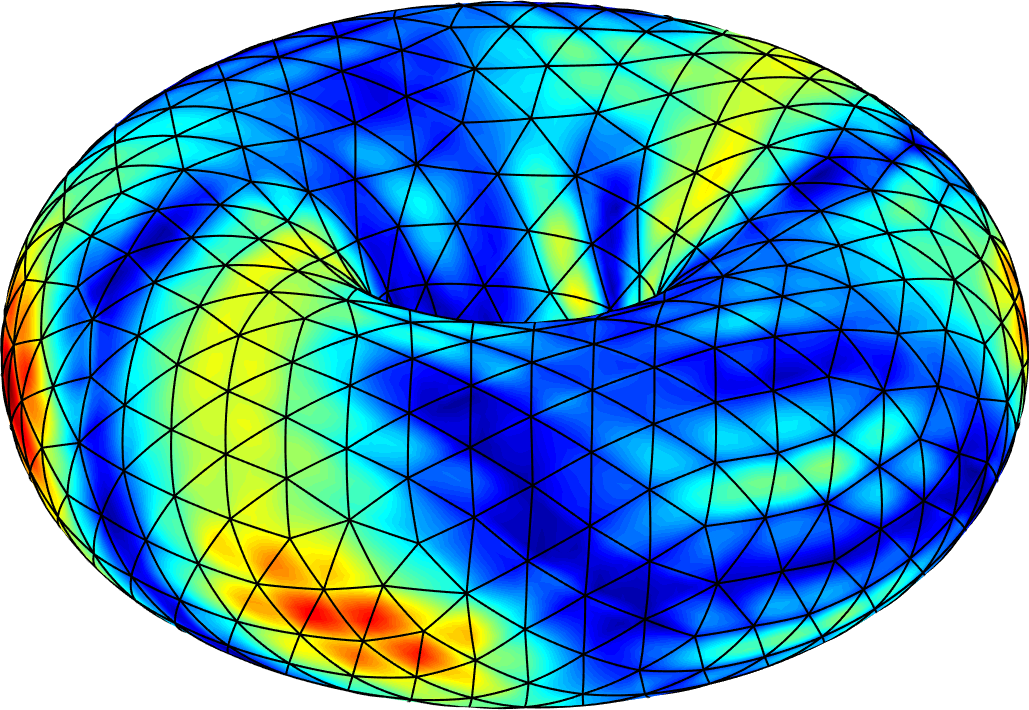}
\label{fig:error-subfig1}
}
\subfigure[$k_g=2$, $k_u=1$]{
\includegraphics[width=0.31\linewidth]{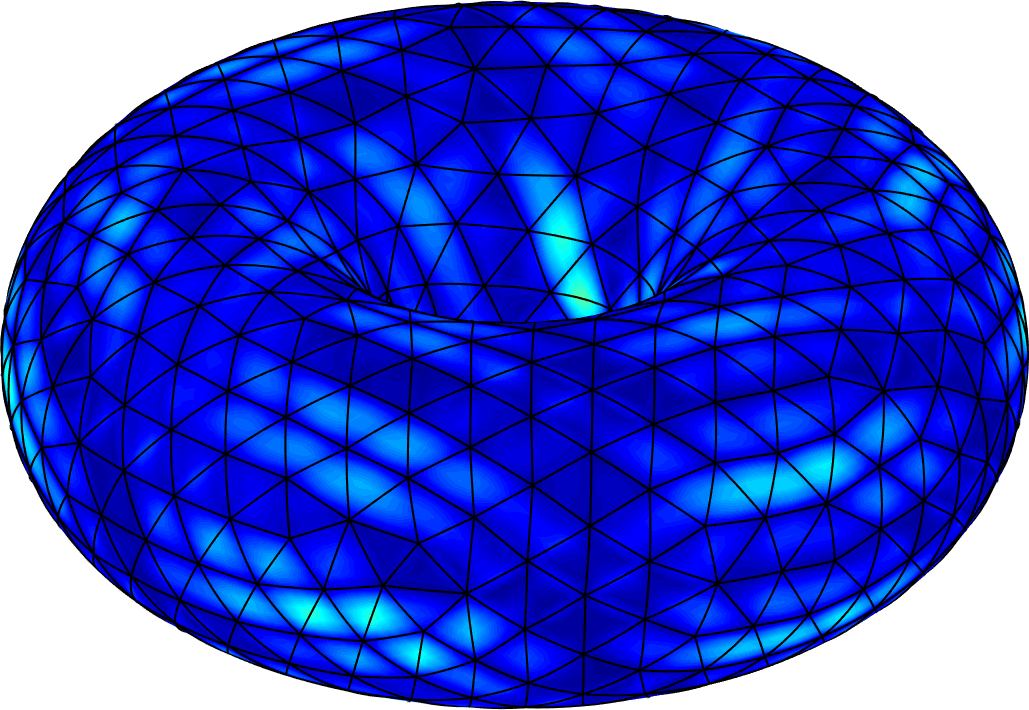}
\label{fig:error-subfig2}
}
\subfigure[$k_g=k_u=1$, $k_p=2$]{
\includegraphics[width=0.31\linewidth]{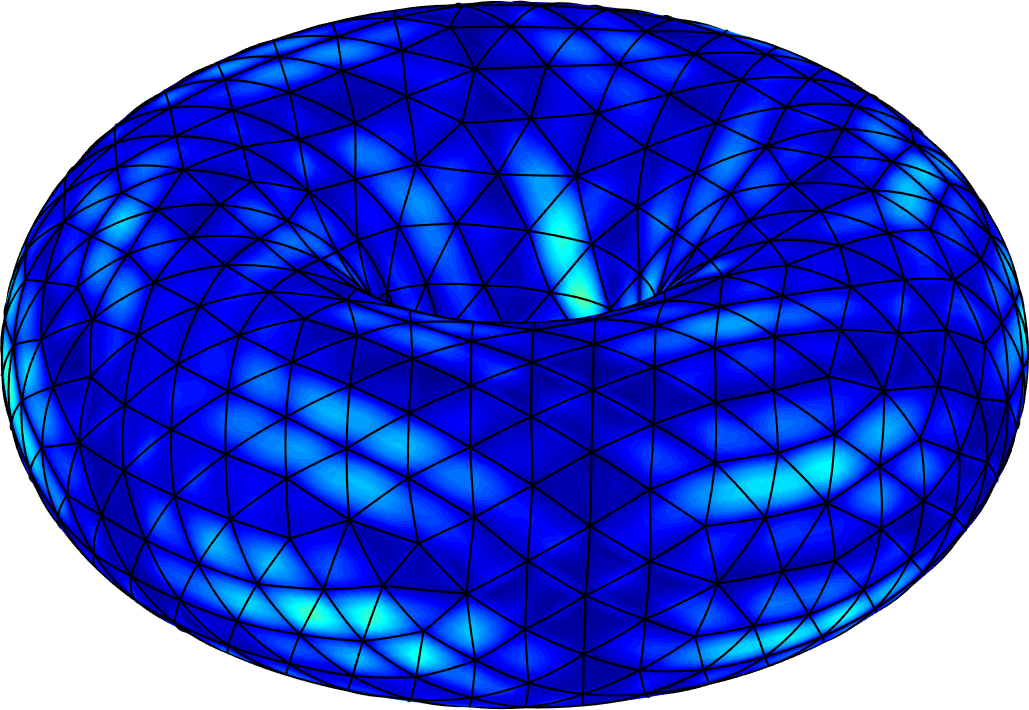}
\label{fig:error-subfig3}
}
\caption{\textbf{Error magnitude.} Error magnitudes $\| u - u_h \|_{\mathbb{R}^3}$ where blue is small and red is large for the standard formulation of the model problem using linear finite elements ($h=0.25$). Note that the increased order of geometry approximation in (b) yields very similar results to (c) where only the normal approximation order in the penalty term is increased, implicating the normal approximation in the penalty term as the dominant source of the error for the isoparametric elements in (a).}
\label{fig:error}
\end{figure}

%%%%%%%%%%%%%%%

\subsection{Convergence} \label{sec:convergence}

We perform convergence studies in $L^2(\Gamma_h)$ norm on the model problem for both the standard problem \eqref{eq:problem}, formulated using the full covariant derivative $\Dsh$, and the symmetric problem \eqref{eq:problem-sym}, formulated using the symmetric part of the covariant derivative $\epssh$. To detect mesh dependence we give results for both structured and perturbed meshes, see example meshes in Figure~\ref{fig:mesh}.

According to the $L^2(\Gammah)$ norm error estimate in Theorem~\ref{thm:error-L2} we have optimal order convergence if the normal in the penalty term is of one order better approximation than the normal we have using isoparametric elements. Also, the normal in the penalty term must have an approximation order at least as good as the normal to a piecewise quadratic interpolation of the surface.
Looking at the the convergence results in Figure~\ref{fig:usual}, and the corresponding results in Figure~\ref{fig:usual-pert}, it seems like the requirements in Theorem~\ref{thm:error-L2} are actually sharp. In particular we note a loss of convergence in the case of linear isoparametric elements and suboptimal convergence for higher-order elements. Using either superparametric elements, i.e., elements where the geometry approximation is one order higher than the finite element approximation, or improving the normal approximation in the penalty term, we see restored optimal order convergence.
While we in the analysis only prove Theorem~\ref{thm:error-L2} for the standard problem \eqref{eq:problem} we in Figure~\ref{fig:symmetric} note that the situation seems to be the same for the symmetric problem \eqref{eq:problem-sym}.

%%%%%%%%%%%%%

\begin{figure}
\centering
\subfigure[$k_g=k_u$]{
\includegraphics[width=0.31\linewidth]{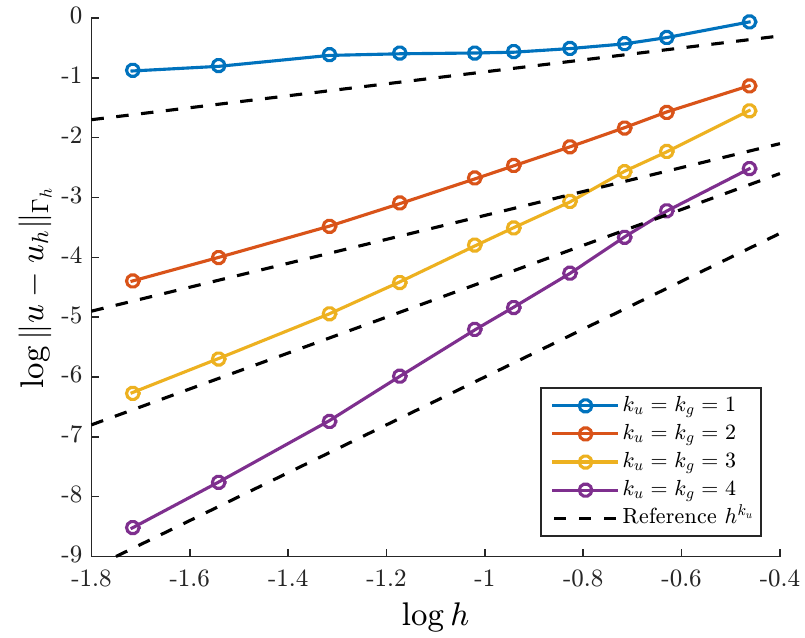}
\label{fig:usual-a}
}
\subfigure[$k_g=k_u+1$]{
\includegraphics[width=0.31\linewidth]{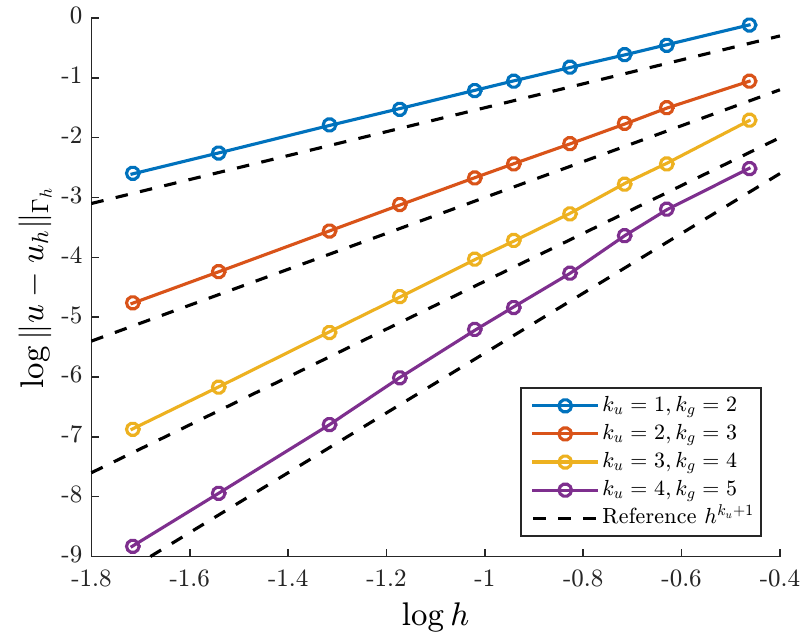}
\label{fig:usual-b}
}
\subfigure[$k_g=k_u$, $k_p=k_u+1$]{
\includegraphics[width=0.31\linewidth]{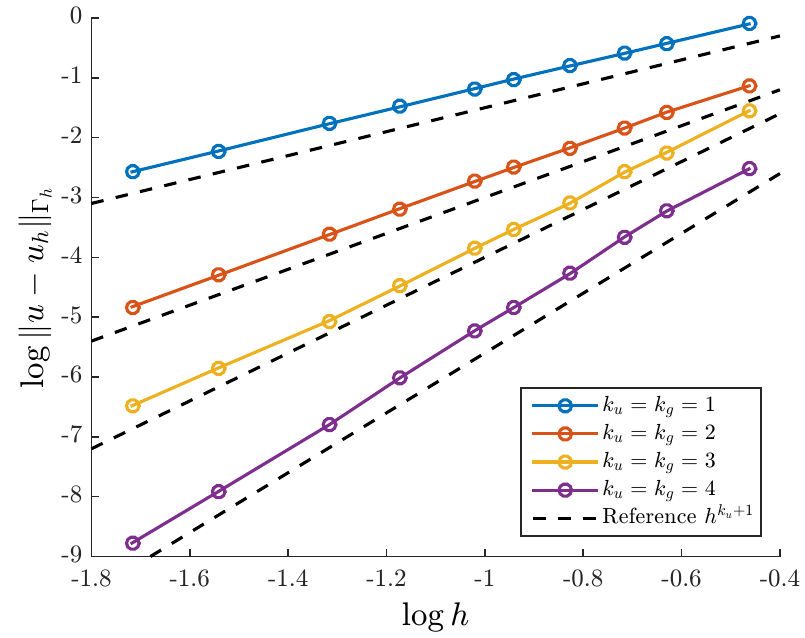}
\label{fig:usual-c}
}

\caption{\textbf{Convergence for standard problem.}
Convergence in $L^2(\Gammah)$ norm is in agreement with the error estimates.
(a) Using isoparametric elements we have no convergence for $k_u=1$ and a convergence rate of $h^{k_u}$ for higher-order elements. The loss of one order here is expected.
(b) Using superparametric elements we have optimal order convergence.
(c) Using isoparametric elements but with an improved normal approximation in the penalty term we again see optimal order convergence.
}
\label{fig:usual}
\end{figure}

\begin{figure}
\centering
\subfigure[$k_g=k_u$]{
\includegraphics[width=0.31\linewidth]{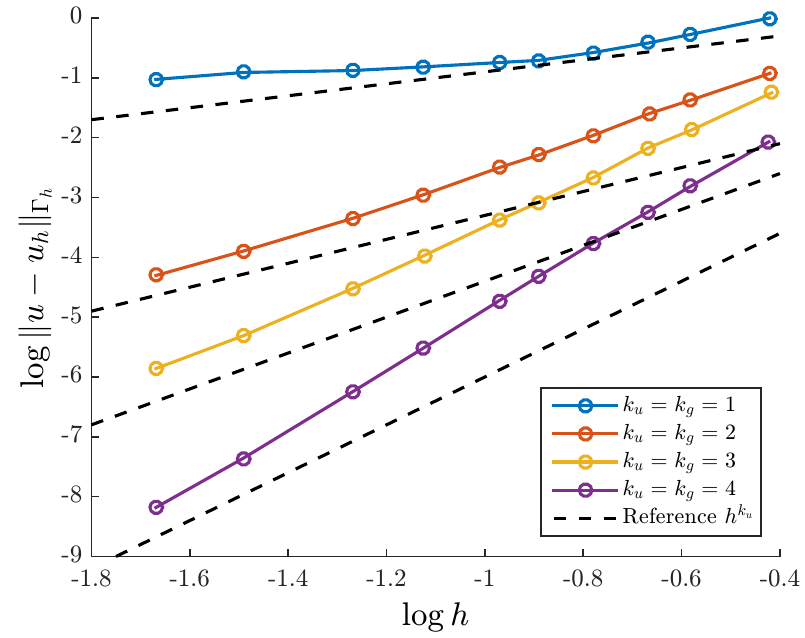}
\label{fig:usual-pert-a}
}
\subfigure[$k_g=k_u+1$]{
\includegraphics[width=0.31\linewidth]{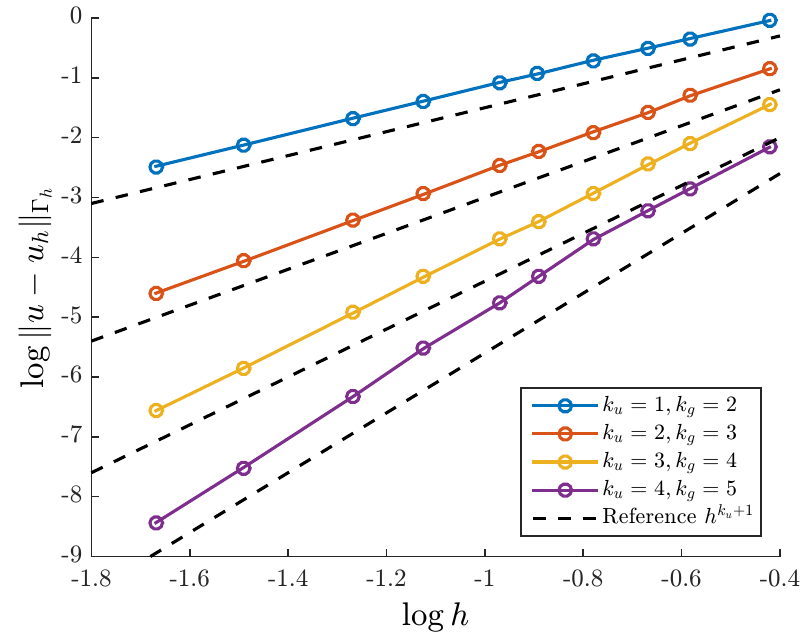}
\label{fig:usual-pert-b}
}
\subfigure[$k_g=k_u$, $k_p=k_u+1$]{
\includegraphics[width=0.31\linewidth]{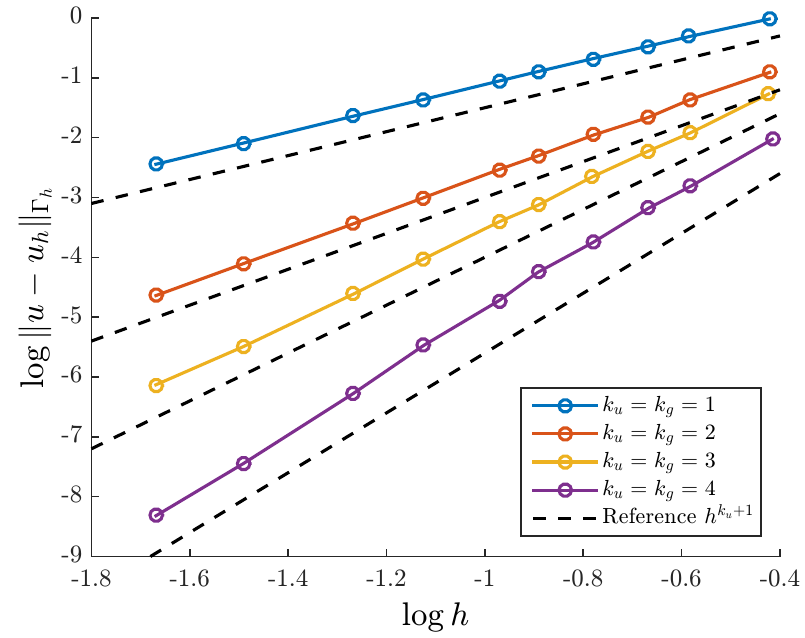}
\label{fig:usual-pert-c}
}

\caption{\textbf{Convergence on perturbed meshes.}
The method seems stable with respect to mesh structure as $L^2(\Gammah)$ convergence is unaffected by mesh perturbations, yielding results very similar to the structured case.
(a) Using isoparametric elements we have no convergence for $k_u=1$ and a convergence rate of $h^{k_u}$ for higher-order elements.
(b) Using superparametric elements we have optimal order convergence.
(c) Using isoparametric elements but with an improved normal approximation in the penalty term we again see optimal order convergence.}
\label{fig:usual-pert}
\end{figure}

\begin{figure}
\centering
\subfigure[$k_g=k_u$]{
\includegraphics[width=0.31\linewidth]{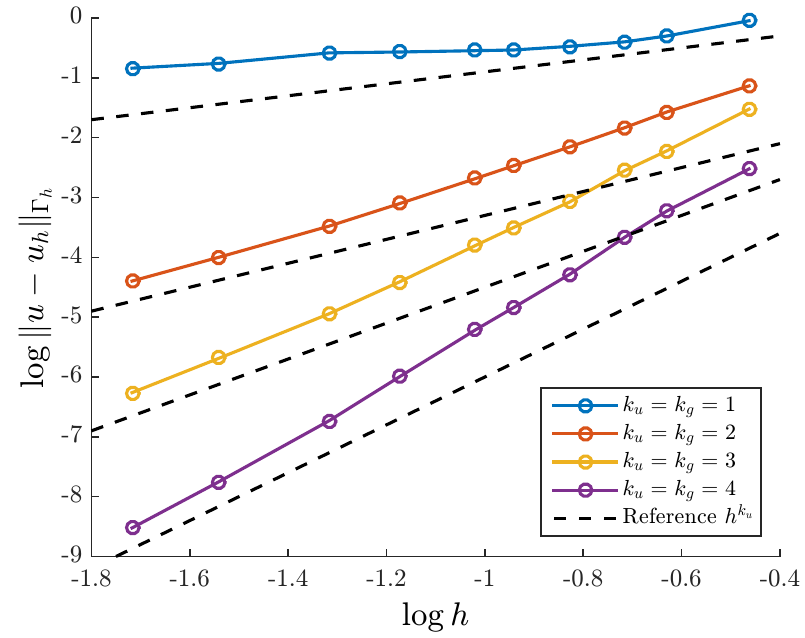}
\label{fig:symmetric-a}
}
\subfigure[$k_g=k_u+1$]{
\includegraphics[width=0.31\linewidth]{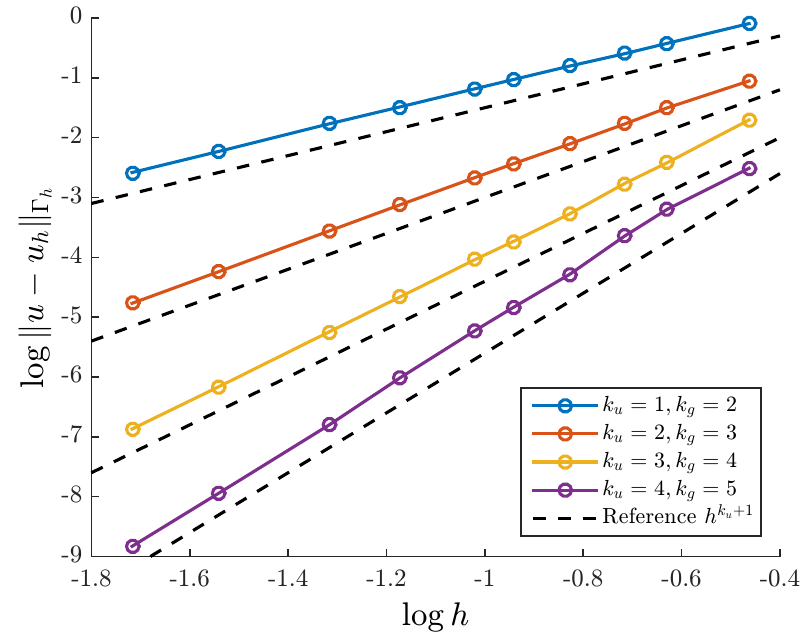}
\label{fig:symmetric-b}
}
\subfigure[$k_g=k_u$, $k_p=k_u+1$]{
\includegraphics[width=0.31\linewidth]{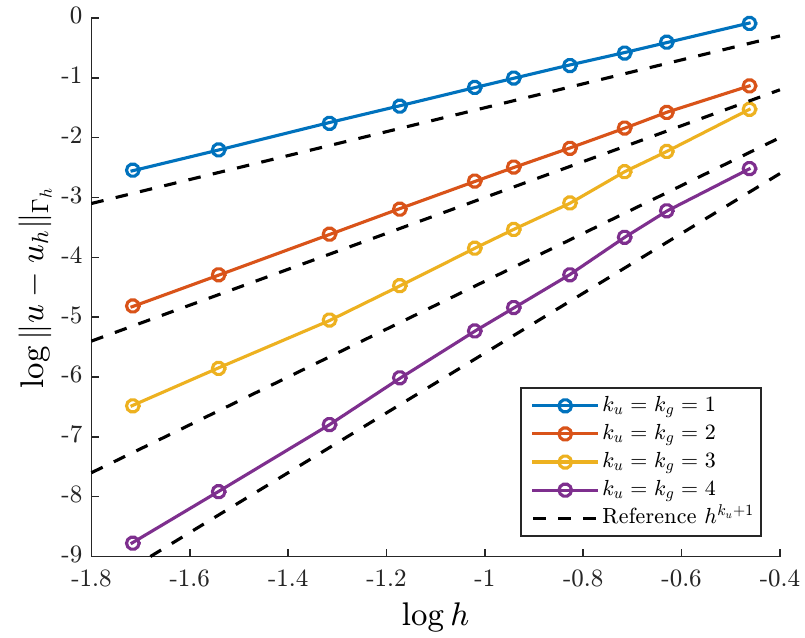}
\label{fig:symmetric-c}
}

\caption{\textbf{Convergence for symmetric problem.}
The method show the same $L^2(\Gammah)$ convergence behavior for the symmetric problem as for the standard problem.
(a) Using isoparametric elements we have no convergence for $k_u=1$ and a convergence rate of $h^{k_u}$ for higher-order elements. 
(b) Using superparametric elements we have optimal order convergence.
(c) Using isoparametric elements but with an improved normal approximation in the penalty term we again see optimal order convergence.
}
\label{fig:symmetric}
\end{figure}

\paragraph{Choice of $\boldsymbol\beta$.}
In the numerical results we have consistently used the normal penalty parameter $\beta=10$. That this is a reasonable choice is motivated by the numerical study presented in Figure~\ref{fig:betas} where we present results for the lowest order elements that exhibit optimal convergence, i.e., linear superparametric elements and linear isoparametric elements with an improved normal in the penalty term. We see that some large values for $\beta$ give a noticeably increased magnitude for the $L^2(\Gammah)$ error, albeit still with the correct asymptotic convergence rate.

\begin{figure}
\centering
\subfigure[$k_g=2$, $k_u=1$]{
\includegraphics[width=0.31\linewidth]{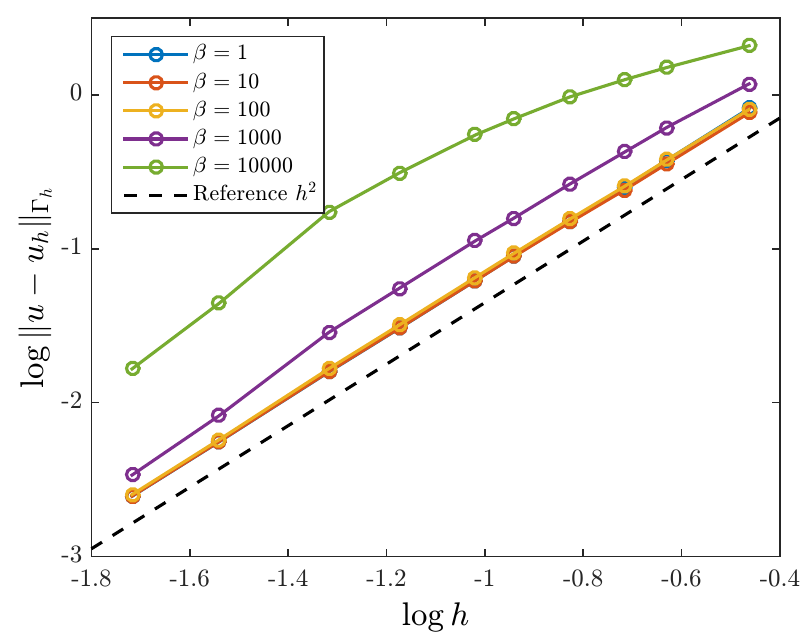}
\label{fig:betas-a}
}
\subfigure[$k_g=2$, $k_u=1$, normal component]{
\includegraphics[width=0.31\linewidth]{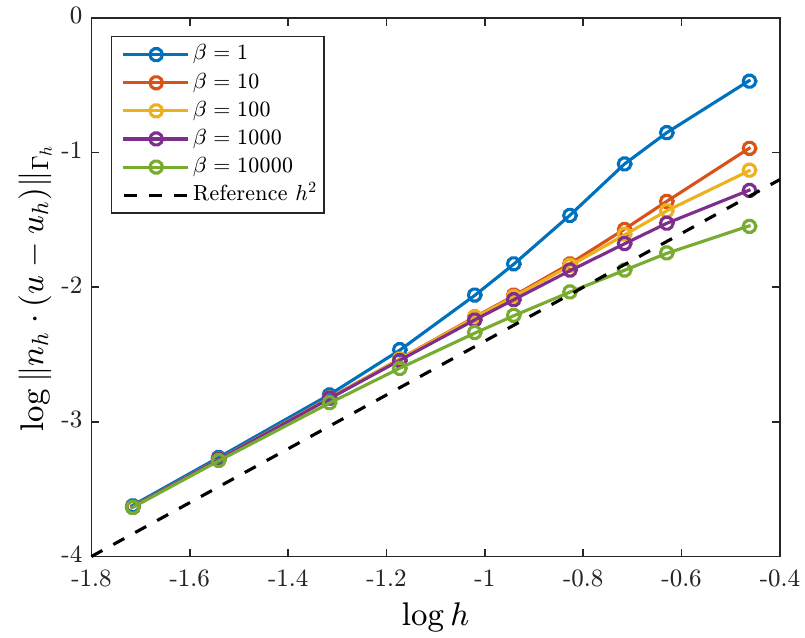}
\label{fig:betas-b}
}
\subfigure[$k_u=k_g=1$, $k_p=2$]{
\includegraphics[width=0.31\linewidth]{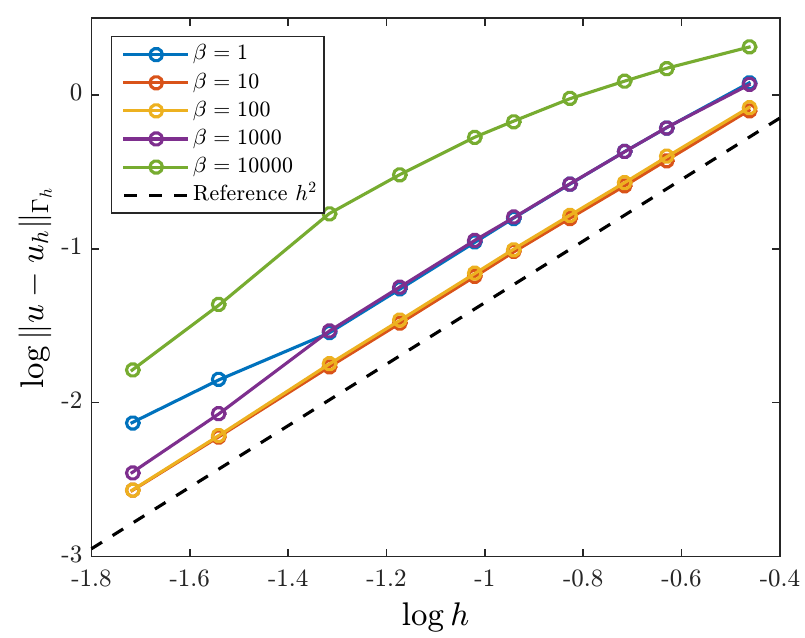}
\label{fig:betas-c}
}

\caption{\textbf{$\boldsymbol{\beta}$-Study.} Convergence studies in $L^2(\Gamma_h)$ norm on the model problem for the standard formulation using various values of the normal penalty parameter $\beta$. (a) Convergence for lowest order superparametric element. (b) Normal component convergence for lowest order superparametric element. (c) Convergence for lowest order isoparametric element with increased normal approximation order in the penalty term.}
\label{fig:betas}
\end{figure}

\paragraph{The Lagrange Multiplier Approach.}
An alternative to using the penalty term, which does not involve a choice of $\beta$, is the Lagrange multiplier approach described in Remark~\ref{rem:multiplier}.
This, more elaborate approach, is also numerically more expensive than the penalty term approach as it is posed as a saddle point problem and the size of the resulting sparse system of equations is increased by the dimension of the approximation space of the Lagrange multipliers.
In the convergence results in Figure~\ref{fig:multiplier} we note very similiar performance to the penalty term approach, with the notable exception that we now see convergence also for the lowest order isoparametric element. At first glance it might even seem like the $L^2(\Gammah)$ convergence for the lowest order isoparametric element is of optimal order, but looking at the normal component of the error presented in Figure~\ref{fig:multiplier-b} it is clear that the asymptotic behavior cannot be of optimal order.
Nevertheless, in cases where linear isoparametric elements must be used the Lagrange multiplier approach has a clear advantage.

\begin{figure}
\centering
\subfigure[$k_g=k_u$]{
\includegraphics[width=0.31\linewidth]{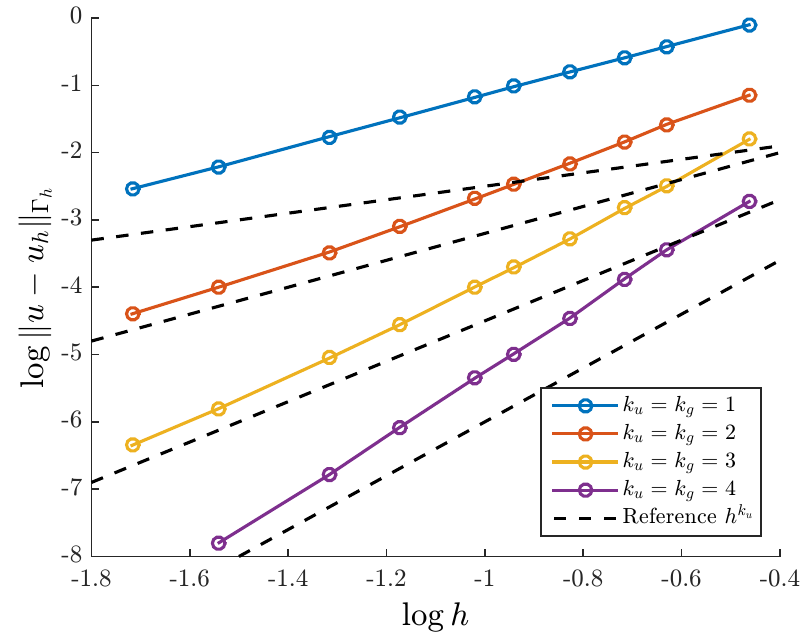}
\label{fig:multiplier-a}
}
\subfigure[$k_g=k_u$, normal component]{
\includegraphics[width=0.31\linewidth]{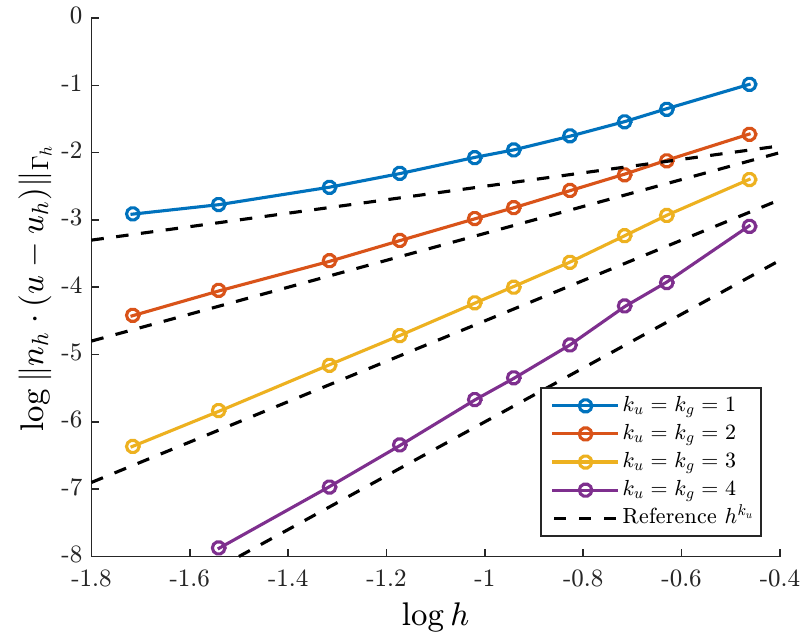}
\label{fig:multiplier-b}
}
\subfigure[$k_g=k_u+1$]{
\includegraphics[width=0.31\linewidth]{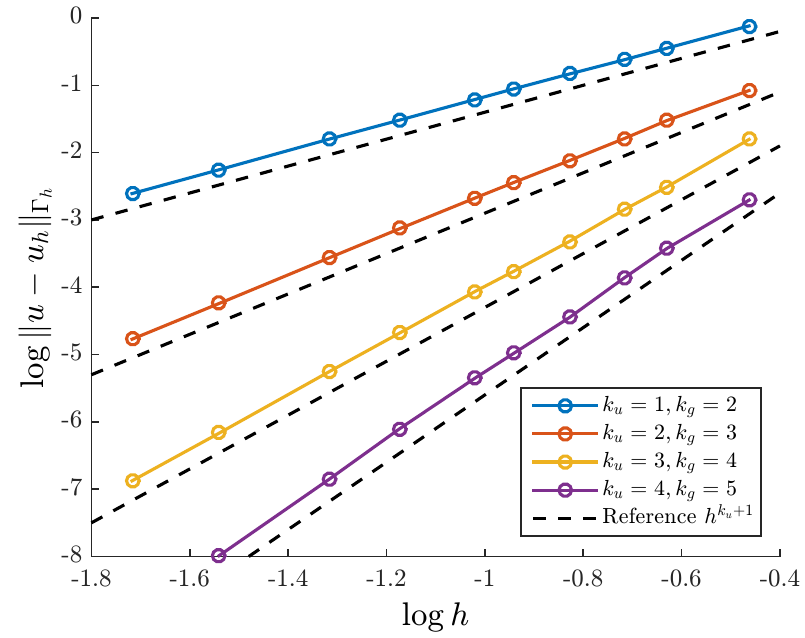}
\label{fig:multiplier-c}
}

\caption{\textbf{Convergence using Lagrange multiplier approach.} Convergence studies in $L^2(\Gammah)$ norm using the Lagrange multiplier approach.
(a) Isoparametric elements suffer from the loss of one order. It might, however, seem like the lowest order isoparametric element is of optimal order but in light of the results in
(b) it seems this behavior will eventually stop.
(c) Using superparametric elements we have optimal order convergence. 
Note that we were unable to compute the last data points for $k_u=4$ due to lack of memory (the mesh at this last data point consists of roughly $1.7\times 10^5$ elements and the finite element space $V_h$ with $k_u=4$ has $4.1\times 10^6$ DoFs).
}
\label{fig:multiplier}
\end{figure}

\paragraph{Tangential Convergence.}
While we in the analysis above prove convergence rates in energy and $L^2$ norms on the full vector field it is of course also of interest to investigate the convergence behavior for the tangential part of the solution.
In Figure~\ref{fig:tangent} we explore the tangential error in $L^2(\Gammah)$ norm when using isoparametric elements in the penalty term approach respectively in the Lagrange multiplier approach. With the exception of linear isoparametric elements using the penalty term approach where we still lack convergence, the convergence rates for the tangential part of the error seem to be of optimal order also for isoparametric elements.
Tangential convergence is arguably more natural to consider as we in the considered vector Laplace problems seek to approximate a tangential vector field.

\vfill % <------------- PAGEBREAK HACK

\begin{figure}
\centering
\subfigure[Penalty term approach]{
\includegraphics[width=0.35\linewidth]{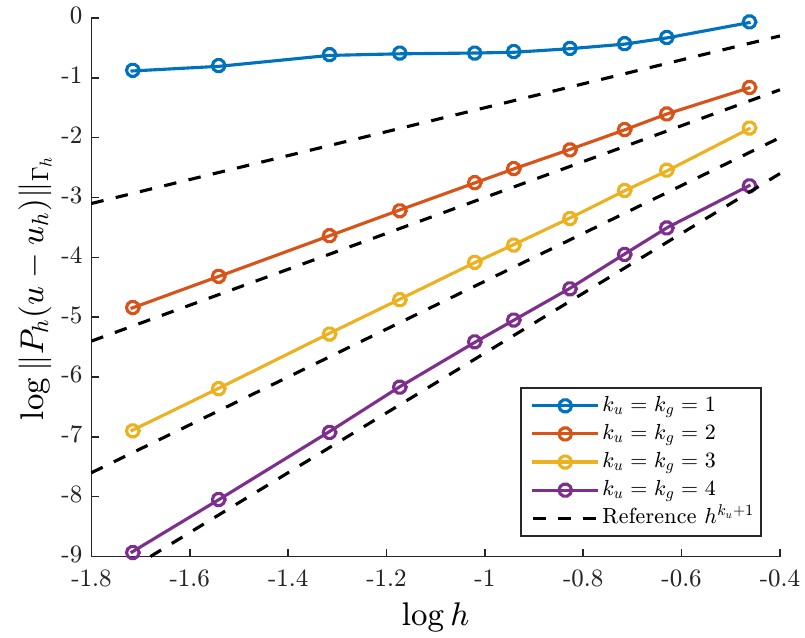}
\label{fig:tangent-a}
}
%\subfigure[Symmetric problem]{
%\includegraphics[width=0.31\linewidth]{}
%\label{fig:tangent-b}
%}
\subfigure[Lagrange multiplier approach]{
\includegraphics[width=0.35\linewidth]{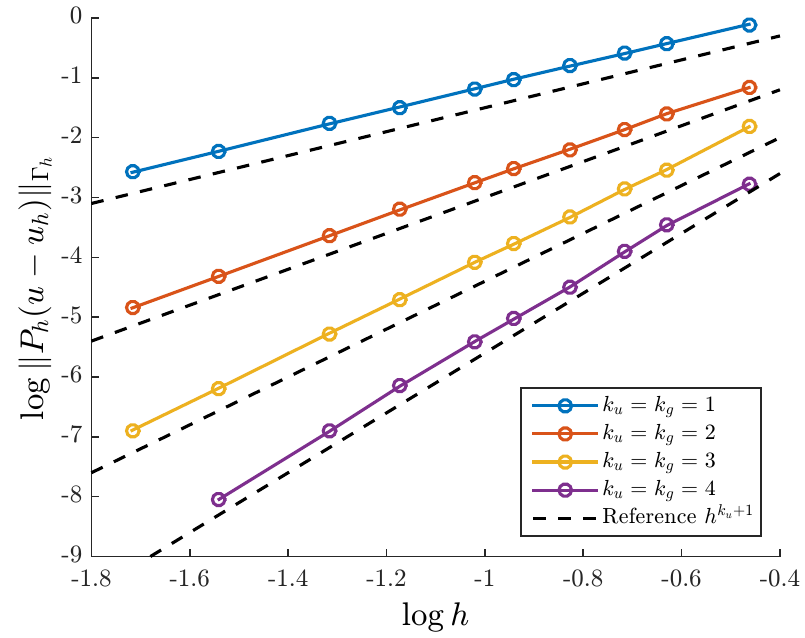}
\label{fig:tangent-c}
}

\caption{\textbf{Tangential convergence.} Convergence studies in $L^2(\Gamma_h)$ norm for the part of the solution tangential to $\Gammah$ in the standard problem. (a) All but the lowest order isoparametric element show optimal order convergence using the penalty term approach. (b) Using the Lagrange multiplier approach also the lowest order isoparametric element exhibits optimal convergence rates.}
\label{fig:tangent}
\end{figure}

%%%%%%%%%%%%%%%%%%%%%%%%%%%%%%%%%%%

%\clearpage
\footnotesize{
\bibliographystyle{abbrv}
  \bibliography{ref}
}

\bigskip
\bigskip
\begin{samepage}
\noindent
\footnotesize {\bf Acknowledgments.}
This research was supported in part by the Swedish Foundation
for Strategic Research Grant No.\ AM13-0029,
the Swedish Research Council Grants Nos.\  2013-4708, 2017-03911,
and the Swedish strategic research programme eSSENCE.

\bigskip
\bigskip
\noindent
\footnotesize {\bf Authors' addresses:}

\smallskip
\noindent
Peter Hansbo,  \quad \hfill \addressjushort\\
{\tt peter.hansbo@ju.se}

\smallskip
\noindent
Mats G. Larson,  \quad \hfill \addressumushort\\
{\tt mats.larson@umu.se}

\smallskip
\noindent
Karl Larsson, \quad \hfill \addressumushort\\
{\tt karl.larsson@umu.se}
\end{samepage}

\end{document}